\newif\ifpics
   \definecolor{cites}{rgb}{0.50 , 0.00 , 0.00}  
   \definecolor{urls} {rgb}{0.00 , 0.00 , 0.50}  
   \definecolor{links}{rgb}{0.00 , 0.00 , 0.50}   
\newcommand\C{{\mathbb C}}
\newcommand\D{{\mathbb D}}
\newcommand\N{{\mathbb N}}
\newcommand\R{{\mathbb R}}
\newcommand\Z{{\mathbb Z}}
\newcommand\eps{{\varepsilon}}
\newcommand\spec{{\rm spec}\,}  
\newcommand\specn{{\rm spec}}   
\newcommand\speps{{\rm Spec}_\eps}
\newcommand\spess{{\rm spec}_{\rm ess}\,}
\newcommand\sppt{{\rm spec}_{\rm point}^\infty\,}
\newcommand\sppte{{\rm spec}_{{\rm point},\eps}\,}
\newcommand\sppten{{\rm spec}_{{\rm point},\eps}}
\newcommand\spptp{{\rm spec}_{\rm point}^p\,}
\newcommand\opsp{{\sigma^{\sf op}}}
\newcommand\opspp{{\sigma_+^{\sf op}}}
\newcommand\opspm{{\sigma_-^{\sf op}}}
\newcommand\opsppm{{\sigma_\pm^{\sf op}}}
\newcommand\dist{{\rm dist}}
\newcommand\ri{{\rm i}}
\newcommand\re{{\rm e}}
\newcommand\ind{{\rm ind\,}}
\newcommand{\rdots}{.\hspace{.1em}\raisebox{.8ex}{.\hspace{.1em}\raisebox{.8ex}{.}}}
\newtheorem{theorem}{Theorem}[section]
\newtheorem{lemma}[theorem]{Lemma}
\newtheorem{corollary}[theorem]{Corollary}
\newtheorem{definition}[theorem]{Definition}
\newenvironment{example}
 {\par\noindent\refstepcounter{theorem}{\bf Example \thetheorem}}
 {\raisebox{1mm}{\framebox{}}\pagebreak[2]}
\newcommand\Proofend{\rule{2mm}{2mm}}
\newenvironment{proof}
 {\par\noindent{\bf Proof.}}
 {\Proofend\pagebreak[2]}
\begin{document}
\title{\bf On the Spectra and Pseudospectra of a Class of Non-Self-Adjoint Random Matrices and Operators}
\author{{\sc Simon N. Chandler-Wilde}\footnote{Email: {\tt S.N.Chandler-Wilde@reading.ac.uk}},\quad {\sc Ratchanikorn Chonchaiya}\footnote{Email: {\tt ratchanikorn@buu.ac.th}}\\[1mm] and\quad {\sc Marko Lindner}\footnote{Email: {\tt lindner@tuhh.de}}}
\date{\today}
\maketitle
\begin{quote}
\renewcommand{\baselinestretch}{1.0}
\footnotesize {\sc Abstract.} In this  paper we develop and apply methods for the spectral analysis of non-self-adjoint tridiagonal infinite and finite random matrices, and for the spectral analysis of analogous deterministic matrices which are pseudo-ergodic in the sense of E.~B.~Davies (Commun.\ Math.\ Phys.\ 216 (2001), 687--704). As a major application to illustrate our methods we focus on the ``hopping sign model'' introduced by J.~Feinberg and A.~Zee (Phys.\ Rev.\ E
59 (1999), 6433--6443), in which the main objects of study are random tridiagonal matrices which have zeros on the main diagonal and random $\pm 1$'s as the other entries. We explore the relationship between spectral sets in the finite and infinite matrix cases, and between the semi-infinite and bi-infinite matrix cases, for example showing that the numerical range and $p$-norm $\eps$-pseudospectra ($\eps>0$, $p\in [1,\infty]$) of the random finite matrices converge almost surely to their infinite matrix counterparts, and that the finite matrix spectra are contained in the infinite matrix spectrum $\Sigma$. We also propose a sequence of inclusion sets for $\Sigma$ which we show is convergent to $\Sigma$, with the $n$th element of the sequence computable by calculating smallest singular values of (large numbers of) $n\times n$ matrices. We propose similar convergent approximations for the 2-norm $\eps$-pseudospectra of the infinite random matrices, these approximations sandwiching the infinite matrix pseudospectra from above and below.
\end{quote}

\noindent
{\it Mathematics subject classification (2000):} Primary 47B80; Secondary 47A10, 47B36.\\
{\it Keywords:} random matrix, spectral theory, Jacobi matrix,
operators on $\ell^p$.

\section{Introduction}
In the last fifteen years there have been many studies of the spectra and pseudospectra of infinite random tridiagonal matrices in the non-self-adjoint case, and of the relationship of the spectral sets of these infinite matrices to those of corresponding large finite random $n\times n$ matrices (see e.g.\ \cite{HatanoNelson1997,FeinZee97,NelsonShnerb1998,FeinZee99,GoldKoru,TrefContEmb,Davies2001:SpecNSA,BoeEmLi,BoeEmSok02,BoeEmSok03a,BoeEmSok03b,HolzOrlZee,TrefEmbBook,BoeGru,Martinez2007,LindnerRoch2010}
and the references therein). In this paper we contribute to this literature, introducing new methods of analysis and computation
 with emphasis throughout, as a major case study, on applying these techniques to understand the ``hopping sign model'' introduced by Feinberg and Zee \cite{FeinZee99}, further studied in Holz, Orland and Zee \cite{HolzOrlZee}, by ourselves previously in \cite{CWChonchaiyaLindner2011}, and see also \cite{CicutaContediniMolinari2000,CicutaContediniMolinari2002} and \cite[Section 37]{TrefEmbBook}. In this model the main object of study is the order $n$ tridiagonal matrix given, for $n\ge 2$, by
$$
A^b_n = \left(\begin{array}{ccccc}
0 & 1& & &  \\
b_1 & 0 & 1 & & \\
& b_2 & 0 & \smash{\ddots} & \\
 & & \ddots & \ddots & 1 \\
 & & & b_{n-1} & 0 \end{array} \right),
$$
where $b = (b_1,\dots,b_{n-1})\in\C^{n-1}$ and each $b_j=\pm1$. (For $n=1$ we set  $A^b_n=(0)$.)

\noindent
\ifpics{  
\begin{center}
\includegraphics[width=1\textwidth]{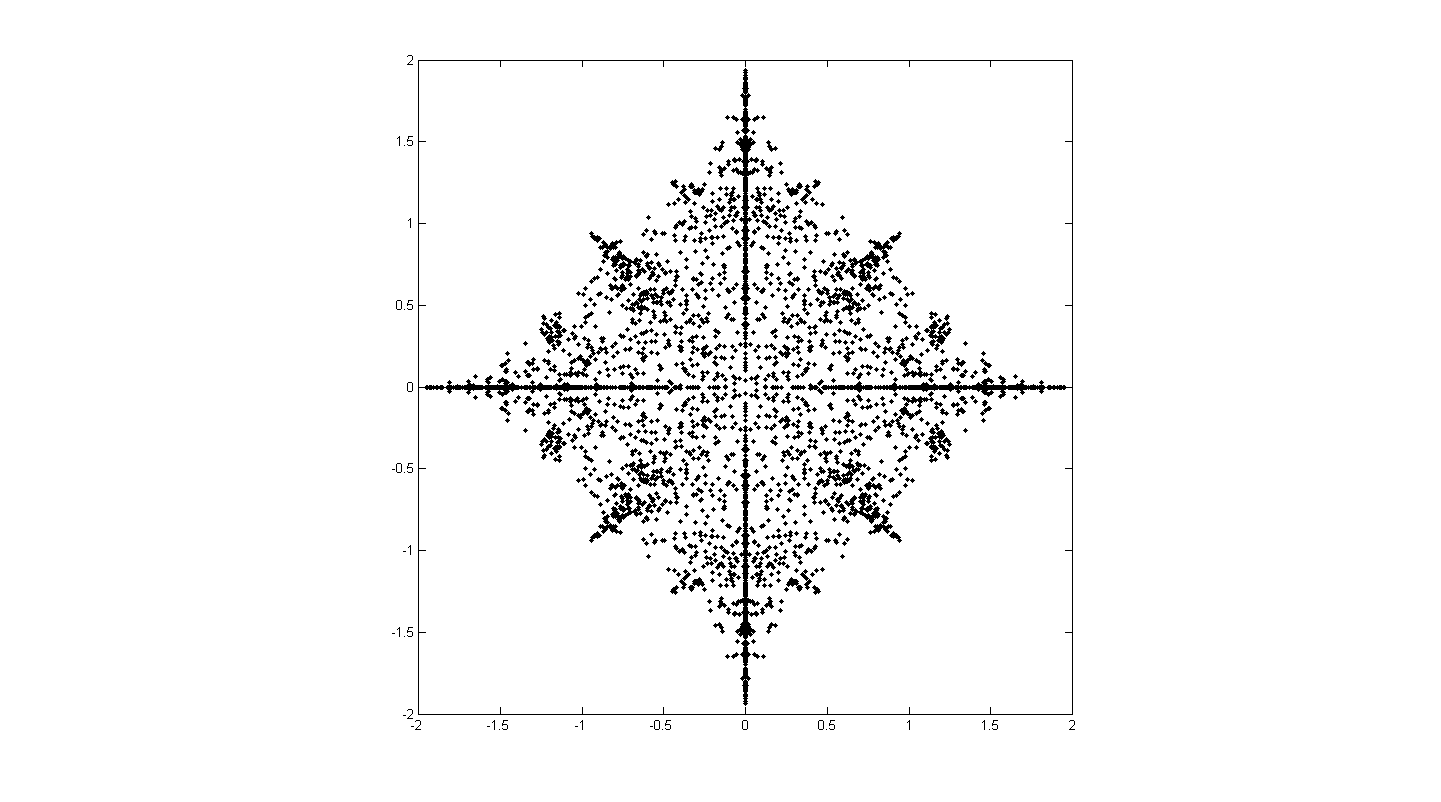}
\end{center}
}\fi 
\begin{figure}[h]
\caption{\footnotesize A plot of $\spec A^b_n$, the set of eigenvalues of $A^b_n$, for a randomly chosen $b\in \{\pm 1\}^{n-1}$, with $n=5000$ and the components $b_j$ of $b$ independently and identically distributed, with each $b_j$ equal to 1 with probability $1/2$. Note the symmetry about the real and imaginary axes by Lemma \ref{lem_symm} below, and that the spectrum is contained in the square with corners at $\pm 2$ and $\pm 2\ri$ by Lemma \ref{lem_nr} below.}  \label{fig:5000random}
\end{figure}

The objectives we set ourselves in this paper are to understand the behaviour of the spectrum and pseudospectrum of the matrix $A_n^b$,  the spectrum and pseudospectrum of the corresponding semi-infinite and bi-infinite matrices, and the relationship between these spectral sets in the finite and infinite cases. Emphasis will be placed on asymptotic behaviour of the spectrum and pseudospectrum of the finite matrix $A^b_n$ as $n\to\infty$, and we will be interested particularly in the case when the $b_j$ are random variables, for example independent and identically distributed (iid), with $\mathrm{Pr}(b_j=1)=0.5$ for each $j$. (A visualisation of $\spec A^b_n$ for a realisation of this random matrix with $n=5000$ is shown in Figure \ref{fig:5000random}; cf.\ \cite{FeinZee99}.) To be more precise, we will focus on the case when the vector $b\in\{\pm1\}^{n-1}$ is the first $n-1$ terms of an infinite sequence $(b_1,b_2,\dots)$, with each $b_j=\pm 1$,  which is {\em pseudo-ergodic} in the sense introduced by Davies \cite{Davies2001:PseudoErg}, which simply means that every finite sequence of $\pm 1$'s appears somewhere in $(b_1,b_2,\dots)$ as a consecutive sequence. If the $b_j$ are random variables then, for a large class of probability distributions for the $b_j$, in particular if each $b_j$ is iid with $\mathrm{Pr}(b_j=1)\in (0,1)$ for each $j$, it is clear that the sequence $(b_1,b_2,\dots)$ is pseudo-ergodic almost surely (with probability one). Thus, although pseudo-ergodicity is a purely deterministic property, our results assuming pseudo-ergodicity have immediate and significant corollaries for the case when $A^b_n$ is a random matrix.

Our interest in studying this problem is in making a contribution to the understanding of the relationship between the spectral properties of finite random matrices and corresponding infinite random matrices in the difficult {\em non-normal} case. (We note that $A^b_n$ is self-adjoint only in the special case that each $b_j=1$, and it is an easy calculation that $A^b_n$ is normal, i.e.\ $A^b_n$ commutes with its transpose, only if $b_1=b_2=\dots=b_{n-1}$.)  For an interesting introduction to the behaviour of random matrices in the non-normal case see \cite{TrefEmbBook}. Our focus in this paper is on the particular matrix $A^b_n$ and especially its infinite counterparts, but in the course of this investigation we develop and apply methods applicable to the study of spectral sets for the much larger classes of infinite tridiagonal or banded matrices.

Our study of the particular matrix $A^b_n$, with each $b_j=\pm 1$, is motivated by interest expressed in this  class of random matrix in the physics literature \cite{FeinZee97,HolzOrlZee,CicutaContediniMolinari2000,CicutaContediniMolinari2002}. Despite this interest there are so far no rigorous mathematical results on the behaviour of the spectrum of $A^b_n$ in the limit as $n\to\infty$. This paper makes steps in this direction. A further motivation for studying the particular matrix class $A^b_n$ is that rigorous results are available on the asymptotics of the spectrum and resolvent norm for a related class of matrices, offering some hope that progress might be possible in this case also. This related class is the case when, rather than the first sub-diagonal consisting of random $\pm1$'s, the diagonal has random $\pm1$'s. Of course, the matrix is then upper-triangular, so that many computations become explicit; in particular the spectrum of the finite matrix is just $\{1,-1\}$ and the spectra of the corresponding infinite matrices can be explicitly calculated: see \cite{TrefContEmb,CWLi2008:Memoir,LiBiDiag} for details. We shall see that the situation in the case studied in this paper is, in a number of respects, rather richer and the analysis more delicate. At the same time in a number of respects our results are more complete: for example, we are able to prove convergence of the pseudospectra of $A^b_n$ to those of the corresponding infinite matrices, and to do this not just in a Hilbert space setting but in $p$-norm for $p\in [1,\infty]$.

The distinctive flavour of the results we develop in this paper, with their significant emphasis on pseudospectra and the relationship between finite random matrices and their infinite matrix counterparts, is in large part inspired by the paper by Trefethen, Contedini and Embree \cite{TrefContEmb}, by Part VIII on random matrices in \cite{TrefEmbBook}, and by results on convergence of the $p$-norm pseudospectra and numerical ranges of $n\times n$ Toeplitz matrices due to
Reichel, Trefethen \cite{ReichTref} and B\"ottcher \cite{Boe94} ($p=2$), and B\"ottcher, Grudsky, and Silbermann \cite{BoeGruSilb97} and Roch \cite{Roch98} ($1<p<\infty$), described more recently in the monograph of B\"ottcher and Grudsky \cite{BoeGru}. 

It is appropriate to draw attention also to a series of papers that is, in some sense, intermediate in its topic between this paper and studies of pure Toeplitz matrices and operators, namely the work of B\"ottcher, Embree and co-authors Sokolov and Lindner \cite{BoeEmSok02,BoeEmSok03a,BoeEmSok03b,BoeEmLi} on randomly perturbed Toeplitz and Laurent operators.
The paper \cite{BoeEmSok02} studies the relation between the spectra of Toeplitz and Laurent operators (i.e. semi-infinite and bi-infinite Toeplitz matrices) in the presence of localised random perturbations. The papers \cite{BoeEmSok03a,BoeEmSok03b} are about the approximation of the spectrum of the perturbed semi-infinite Toeplitz matrix by the spectra of corresponding finite submatrices. The same question for bi-infinite matrices and periodised submatrices (circulants) is the topic of \cite{BoeEmLi}. Interestingly, although the setting in \cite{BoeEmSok02,BoeEmSok03a,BoeEmSok03b,BoeEmLi} is different from that of the current paper, there are common phenomena such as the appearance of fractal structures in the spectra.


Let $\N$ denote the set of positive integers and $\Z$ the set of integers. Throughout, $\{\pm1\}^\Z$, $\{\pm1\}^\N$, and $\{\pm1\}^m$, for $m\in\N$, denote the sets of vectors in $\ell^\infty(\Z)$, $\ell^\infty(\N)$ and $\C^m$, respectively, whose entries $b_j=\pm1$. The related infinite-dimensional operators we study include the operators $A^b_+$, for $b=(b_1,b_2,\dots)\in \ell^\infty(\N)$, especially when each $b_j=\pm1$. Here $A^b_+$ acts on the sequence space $\ell^p(\N)$, for $p\in[1,\infty]$, by the action
\begin{equation} \label{Aaction}
(A^b_+ x)_i = \sum_{j\in\N} (A^b_+)_{ij} x_j, \quad i\in\N,
\end{equation}
where $(A^b_+)_{ij}= b_{i-1}\delta_{i-1,j}+\delta_{i+1,j}$ and $\delta_{ij}$ is the usual Kronecker delta. In other words, $A^b_+$ acts by multiplication by the infinite matrix
$$
A^b_+ = \left(\begin{array}{cccc}
0 & 1& &  \\
b_1 & 0 & 1 & \\
& b_2 & 0 & \smash{\ddots}  \\
& & \ddots & \ddots  \end{array} \right),
$$
which has entry $(A^b_+)_{ij}$ in row $i$, column $j$, for $i,j\in\N$. (For simplicity, we make no distinction in our notation between $A^b_+$ and its matrix representation.) A main aim of the paper will be to compute the spectrum, pseudospectrum, and numerical range of $A^b_+$ in the case when $b\in\{\pm1\}^\N$ is pseudo-ergodic. We shall also study the same properties of the corresponding operator $A^b$  which acts on $\ell^p(\Z)$, again focusing on the case when $b\in\{\pm 1\}^\Z$ is pseudo-ergodic. The action of $A^b$ is given by the same formula (\ref{Aaction}) but now with $b\in\ell^\infty(\Z)$ and with $\N$ replaced by $\Z$. In other words, $A^b$ acts by mutiplication by the bi-infinite matrix
\begin{equation} \label{eq:Ab}
A^b\ :=\ \left(\begin{array}{ccccc} \\
\ddots&\ddots\\
\ddots&0&1\\
\cline{3-3}
&b_{-1}&\multicolumn{1}{|c|}{0}&1\\
\cline{3-3}
&&b_{0}&0&\smash{\ddots}\\
&&&\ddots&\ddots
\end{array}\right),
\end{equation}
where the box marks the matrix entry at $(0,0)$.
Our results will also apply, through the application of similarity transforms,  to the more general matrices
\begin{equation} \label{Abcn}
A^{b,c}_n = \left(\begin{array}{ccccc}
0 & c_1& & &  \\
b_1 & 0 & c_2  & &\\
& b_2 & 0 & \smash{\ddots}  &\\
 & & \ddots & \ddots & c_{n-1} \\
 & & & b_{n-1} & 0 \end{array} \right),
\end{equation}
in the case when $b_j=\pm1$ and $c_j=\pm1$, and to the corresponding infinite matrices
\begin{equation} \label{eq:Abc}
A^{b,c}_+ = \left(\begin{array}{cccc}
0 & c_1& &  \\
b_1 & 0 & c_2 & \\
& b_2 & 0 & \smash{\ddots}  \\
& & \ddots & \ddots  \end{array} \right)
\quad \mbox{and}\quad
A^{b,c}\ :=\ \left(\begin{array}{ccccc}
\ddots&\ddots\\
\ddots&0&c_{-1}\\
\cline{3-3}
&b_{-1}&\multicolumn{1}{|c|}{0}&c_0\\
\cline{3-3}
&&b_{0}&0&\smash{\ddots}\\
&&&\ddots&\ddots
\end{array}\right).
\end{equation}

\subsection{The Main Results}

Let us summarise the main results that we obtain in this paper, first introducing a few key notations and definitions.
We mention that first versions of a number of the results in this paper are contained in the PhD thesis of the second author \cite{HengPhD}, and that a number of the results were announced (without proofs) in \cite{CWChonchaiyaLindner2011}.

Throughout, where $B$ is a bounded linear operator on $\ell^p(S)$, for some $p\in[1,\infty]$, with $S=\Z$ or $\N$, or where $B$ is a square matrix, we denote by $\spec B$ the {\em spectrum} of $B$, i.e.\ the set of $\lambda\in\C$ for which $B-\lambda I$ ($I$ the identity matrix or operator) is not invertible. (We note that the spectra of $A^b$ and $A^b_+$ do not depend on $p\in[1,\infty]$, from general results on band operators (e.g.~\cite{LiBook}); of course, when $B$ is a matrix, the spectrum is just the set of eigenvalues of $B$.) Throughout, $\|x\|_p$, for $p\in[1,\infty]$, will be our notation for the standard $p$-norm of $x$, for $x\in \ell^p(S)$, with $S=\Z$ or $\N$, or $x\in\C^m$, for some $m\in\N$. Where $B$ is an operator or matrix, $\|B\|_p$ will denote the norm of $B$ induced by the vector norm $\|\cdot\|_p$, i.e.\ $\|B\|_p := \sup_{\|x\|_p=1} \|Bx\|_p$. With this notation, following e.g.\ \cite{TrefEmbBook},
for $p\in [1,\infty]$ and $\eps>0$ we define the $\ell^p$ $\eps$-pseudospectrum of $B$, $\specn^p_\eps B$, by
$$
\specn^p_\eps B := \spec B \cup \{\lambda\in\C: \|(B-\lambda I)^{-1}\|_p > \eps^{-1}\}.
$$
When $B$ is a bounded linear operator on $\ell^p(S)$, for some $p\in[1,\infty]$ and $S=\Z$ or $\N$, in general the spectrum of $B$ is larger than the set of eigenvalues of $B$. 
We let $\spptp B$ denote the set of eigenvalues of $B$ considered as an operator on $\ell^p(S)$, i.e.\
$$
\spptp B := \{\lambda\in \C: Bx = \lambda x, \mbox{ for some }x\in\ell^p(S) \mbox{ with }x\neq 0\}.
$$

A key result we obtain on the spectra of our infinite matrices, in large part through limit operator arguments described in Section \ref{sec:lo},  is the following (cf.\ \cite{Davies2001:PseudoErg}): if
$b,c,d\in \{\pm1\}^\N$, $\tilde b, \tilde c,\tilde d\in \{\pm1\}^\Z$, and $b$, $\tilde b$, $cd$, and $\tilde c\tilde d$ are all pseudo-ergodic, then
\begin{equation} \label{eq:big}
\spec A^b_+ = \spec A_+^{c,d} = \spec A^{\tilde b} = \spec A^{\tilde c,\tilde d} = \Sigma := \bigcup_{e\in\{\pm1\}^\Z} \spec A^e = \bigcup_{e\in\{\pm1\}^\Z} \sppt A^e .
\end{equation}
One surprising aspect of this formula is that the semi-infinite and bi-infinite matrices share the same spectrum, in contrast to many of the cases discussed in \cite{TrefContEmb}, this connected to the symmetries that we explore in Section \ref{sec:nrsymm}.

We do not know a simple test for membership of the set $\Sigma$ given by this characterisation (though see Figures \ref{fig:30pics1} and \ref{fig:30pics2} below for plots of known subsets of $\Sigma$, and see Section \ref{sec:final} for an algorithm for computing approximations to $\Sigma$). But this result implies that $\spec A^b \subset \Sigma$ for every $b\in\{\pm1\}^\Z$ which gives the possibility of determining subsets of $\Sigma$ by computing $\spec A^b$ for particular choices of $b$. In particular, as recalled in Section \ref{sec:lo}, when $b$ is $n$-periodic for some $n\in\N$, i.e.\ $b_{j+n}=b_j$ for $j\in\Z$, $\spec A^b$ can be computed by calculating eigenvalues of an order $n$ matrix (a periodised version of $A^b_n$). We compute $\pi_n \subset \Sigma$, for $n=5,10,...,30$ in Section \ref{sec:lo}, where $\pi_n$ denotes the union of $\spec A_b$ over all $n$-periodic $b\in\{\pm1\}^\Z$. We speculate at the end of the paper that
\begin{equation} \label{eq:pidef}
\pi_\infty := \bigcup_{n\in\N}\pi_n
\end{equation}
is dense in $\Sigma$, and it has been shown recently in \cite{CWDavies2011} that certainly $\pi_\infty$ is dense in the unit disc $\D = \{z:|z|<1\}$, which implies that $\overline{\D}\subset \Sigma$, as established slightly earlier directly from (\ref{eq:big}) in \cite{CWChonchaiyaLindner2011}. (Throughout, $\overline{S}$ denotes the closure of a set $S\subset \C$: for an element $z\in\C$, $\bar z$ denotes the complex conjugate.)

To obtain a first upper bound on $\Sigma$ we compute the $\ell^2$-numerical range, $W(A^b)$, of $A^b$ when $b$ is pseudo-ergodic. We show that, if
$b,c,d\in \{\pm1\}^\N$, $\tilde b, \tilde c,\tilde d\in \{\pm1\}^\Z$, and $b$, $\tilde b$, $cd$, and $\tilde c\tilde d$ are all pseudo-ergodic, then
$$
W(A^b_+) = W(A_+^{c,d}) = W(A^{\tilde b}) = W(A^{\tilde c,\tilde d}) = \Delta := \{z=a+\ri b: a,b\in\R,\, |a|+|b|<2\}.
$$
Since the spectrum is necessarily contained in the closure of the numerical range, this implies that
$$
\overline{\D}\subset \Sigma\subset \overline{\Delta}.
$$
We point out that the numerical range of $A^b_n$ converges to that of $A^b$, in particular that
$W(A_n^b)\nearrow \Delta$
as $n\to\infty$, if $b$ is pseudo-ergodic. (Here and throughout, for $T_n\subset\C$ and $T\subset \C$, the notation $T_n\nearrow T$ means that $T_n\subset T$ for each $n$ and that $\dist(T,T_n)\to 0$ as $n\to\infty$, with $\dist(T,T_n)$ the Hausdorff distance defined in (\ref{eq_HD}) below.)

The largest part of the paper (Section \ref{sec:fininf}) is an investigation of the relationship between the finite and infinite matrix cases with respect to behaviour of spectra and pseudospectra. The spectral case is harder: our main result is to show that the spectra of the finite matrices are subsets of the infinite matrix spectra, precisely that, for every $n$ and every $c\in\{\pm1\}^{n-1}$,
$$
\spec A_n^c \subset \pi_{2n+2} \subset \Sigma,
$$
so that $\sigma_n := \bigcup_{c\in\{\pm1\}^{n-1}} \,\spec A_n^c \subset \pi_{2n+2}\subset \Sigma$ and
\begin{equation} \label{eqn:sig_sub}
\sigma_\infty := \bigcup_{n\in\N} \sigma_n \subset \pi_\infty \subset \Sigma.
\end{equation}
We suspect that
$
\spec A^b_n \nearrow \Sigma = \spec A^b_+
$
as $n\to\infty$, if $b\in\{\pm1\}^\N$ is pseudo-ergodic, and the numerical results in Figures \ref{fig:5000random} and \ref{fig:30pics1}, and other similar computations, are suggestive of a conjecture that $\spec A^b_n\nearrow \pi_\infty$, which set, as mentioned already, we speculate is dense in $\Sigma$.

We can prove neither of these last two conjectures about spectral asymptotics. On the other hand, our theoretical results for the pseudospectrum are fairly complete. We show first in Theorem \ref{thm_main} a pseudospectral version of (\ref{eq:big}), that, if
$b,c,d\in \{\pm1\}^\N$, $\tilde b, \tilde c,\tilde d\in \{\pm1\}^\Z$, and $b$, $\tilde b$, $cd$, and $\tilde c\tilde d$ are all pseudo-ergodic, then, for $p\in[1,\infty]$ and $\eps>0$,
$$
\specn_\eps^p A^b_+ = \specn_\eps^p A_+^{c,d} = \specn_\eps^p A^{\tilde b} = \specn_\eps^p A^{\tilde c,\tilde d} = \Sigma_\eps^p := \bigcup_{e\in\{\pm1\}^\Z} \specn_\eps^p A^e.
$$
We then show that the pseudospectra of the large finite matrices are contained in and are well-approximated by the pseudospectra of the infinite matrices, and that this works for $p$-norm pseudospectra for the full range $p\in [1,\infty]$. Precisely, for $p\in[1,\infty]$ and $\eps>0$, we show that, if $b$ is pseudo-ergodic, then
\begin{equation} \label{eq:pslim}
\specn_\eps^p A_n^b \nearrow \Sigma_\eps^p
\end{equation}
as $n\to\infty$.

This last result, linking the pseudospectra of $A^b_+$ with those of its finite sections $A^b_n$, is a somewhat unexpectedly satisfactory result. Even in the case in which the theory of the finite section method is arguably simplest and most well-understood, namely the case of the Toeplitz operator (a semi-infinite Toeplitz matrix), the limit as $n\to \infty$ of the $\eps$-pseudospectra of the $n\times n$ finite section Toeplitz matrices has been calculated only relatively recently, and only for $p\in (1,\infty)$ \cite{BoeGruSilb97,BoeGru}. Moreover, except for the special case $p=2$ (see \cite{ReichTref,Boe94}), this limit is not, in general, just the $\ell^p$ $\eps$-pseudospectrum of the Toeplitz operator, but rather the union of the $\ell^p$ and $\ell^q$ $\eps$-pseudospectra, with $p^{-1}+q^{-1}=1$. (A component of the explanation of (\ref{eq:pslim}) is that we show in Lemma \ref{lem_sim} that $\Sigma_\eps^p=\Sigma_\eps^q$ for $p^{-1}+q^{-1}=1$.)

Equation (\ref{eq:pslim}) leads to characterisations of the spectrum $\Sigma$ which, in principle, can be used for numerical approximation. Since $\bigcap_{\eps>0} \Sigma_\eps^p = \Sigma$, it holds that
\begin{equation} \label{eqn:ch_G}
\Sigma = \lim_{\eps\to 0} \Sigma_\eps^p = \lim_{\eps\to 0}\lim_{n\to\infty} \specn_\eps^p A_n^b,
\end{equation}
for every $p\in[1,\infty]$ and pseudo-ergodic $b$. However, the formula (\ref{eqn:ch_G}) is not guaranteed to give useful results for any fixed $\eps$ and $n$ as the convergence as $n\to\infty$ may be arbitrarily slow, as discussed in Section \ref{sec:final}. In that section we develop alternative, much more useful, convergent sequences of computable, upper and lower bounds for $\Sigma_\eps^2$ and a convergent sequence of computable upper bounds for $\Sigma$. We show firstly that
$$
\sigma^2_{n,\eps} := \bigcup_{c\in\{\pm1\}^{n-1}}\specn_\eps^2 A_n^c \subset \Sigma_\eps^2 \subset \sigma^2_{n,\eps+\eps_n}=\bigcup_{c\in\{\pm1\}^{n-1}}\specn_{\eps+\eps_n}^2 A_n^c,
$$
giving explicit expressions for the $\eps_n$ which satisfy that $\eps_n=O(n^{-1})$ as $n\to\infty$, and showing that $\sigma^2_{n,\eps}\nearrow \Sigma_\eps^2$ and $\sigma^2_{n,\eps+\eps_n}\searrow \Sigma_\eps^2$ as $n\to\infty$.  (The notation $T_n\searrow T$ means that $T\subset T_n$ for each $n$ and that $\dist(T,T_n)\to 0$ as $n\to\infty$.) Then, taking the intersection over all $\eps$, we deduce that
$$
\sigma_n = \bigcup_{c\in\{\pm1\}^{n-1}}\spec A_n^c \subset \Sigma \subset \overline{\sigma^2_{n,\eps_n}},
$$
and prove that
$$
\overline{\sigma^2_{n,\eps_n}}\searrow \Sigma \quad \mbox{as} \quad n\to\infty.
$$
In a substantial series of numerical calculations, we compute these convergent upper bounds $\overline{\sigma^2_{n,\eps_n}}$ for the spectrum $\Sigma$ in Section \ref{sec:final}, and through these calculations demonstrate that $\Sigma$ is a strict subset of $\overline{\Delta}$.

All these results have  implications for the behaviour of the spectral sets  of $A^b$, $A^b_+$, $A_n^b$, $A^{b,c}$, $A^{b,c}_+$, and $A^{b,c}_n$, when the entries $b_j=\pm1$ and $c_j=\pm1$ are random, and we make explicit these implications in a final Theorem \ref{thm_rand}, in the same section summarising succintly what we have established about the spectral sets $\Sigma$ and $\Sigma_\eps^p$ (Theorem \ref{thm_vfinal}), and outlining a number of open problems.

In the course of this investigation, focused on a particular operator and matrix class, we develop results for the larger classes of tridiagonal or banded finite and infinite matrices. In particular, Theorem \ref{thm_pseudo_converg} shows that, for $p\in[1,\infty]$, $\eps>0$, the $\ell^p$ $\eps$-pseudospectrum of a general, semi-infinite tridiagonal matrix is contained, for $\eps^\prime>\eps$, in the $\ell^p$ $\eps^\prime$-pseudospectrum of its $n\times n$ finite section if $n$ is sufficiently large. It also shows corresponding results relating the pseudospectra of a general bi-infinite matrix to that of its finite sections. In Section \ref{sec:lo} we employ recent work \cite{CWLi2008:FC,CWLi2008:Memoir} on limit operator methods for the study of spectral sets for very general classes of infinite matrices. We make explicit in Theorems \ref{thm_lo} and \ref{thm_losemi} the implications of this work for the essential spectrum, spectrum, and pseudospectra of bi-infinite and semi-infinite banded matrices with numerical (as opposed to operator-valued) entries. In Section \ref{sec:final} we make the first substantive application of a new method which generates sequences of inclusion sets for the spectra and pseudospectra of a tridiagonal operator, demonstrating, through this application, that these sequences of inclusion sets can in fact converge to the spectral sets that they enclose.

\subsection{Pseudospectra and the Numerical Range} \label{sec:pseudo}

We shall need throughout the paper a number of properties of the $\eps$-pseudospectra of a bounded linear operator $B$ on a Banach space $X$, and of the pseudospectra of its adjoint operator $B^*$ on the dual space $X^*$ (dual in the sense e.g. of \cite{Kato_book}, so that $X^*$ is the set of bounded anti-linear functionals, and the spectrum of $B^*$ is the complex conjugate of the spectrum of $B$). We summarise these properties in this section, pointing out how the theory of pseudospectra in the Banach space setting has recently been significantly clarified by work of Shargorodsky \cite{Shargorodsky2008}. The properties we shall need include the equivalent definitions encapsulated in the following theorem:

\begin{theorem} \label{thm_pse} The $\eps$-pseudospectrum of a bounded linear operator $B$ on a Banach space $X$ is defined, for $\eps>0$, by any one of the following equivalent definitions:
\begin{description}
\item[(i) ] $\specn_\eps B = \spec B \cup \{\lambda\in \C: \|(B-\lambda I)^{-1}\| > \eps^{-1}\}$;
\item[(ii)] $\specn_\eps B = \spec B \cup \{\lambda \in \C: \nu(B-\lambda I) < \eps\}$, where $\nu(C)$ is the lower norm of a bounded linear operator $C$, defined by $\nu(C) := \inf_{\|x\|=1} \|Cx\|$;
\item[(iii)] $\specn_\eps B$ is the union of $\spec B$ and the set $\sppte B$ of $\eps$-pseudoeigenvalues of $B$, where $\lambda$ is an $\eps$-pseudoeigenvalue if there exists $x\in X$ with $\|x\|=1$ and $\|(B-\lambda I)x\|<\eps$;
\item[(iv)] $\specn_\eps B$ is the union of $\sppte B$ and the complex conjugate of $\sppte B^*$;
\item[(v)] $\specn_\eps B = \bigcup_{\|E\|<\eps} \spec (B+E)$, the union taken over all bounded linear operators $E$ with $\|E\|<\eps$.
\end{description}
\end{theorem}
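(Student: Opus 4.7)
The plan is to establish the five characterisations in stages, handling the easy identifications first and then the two substantive equivalences, namely the adjoint formulation (iv) and the perturbation formulation (v).

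First I would package (i), (ii) and (iii) together. Since all three explicitly include $\spec B$, it suffices to treat $\lambda \notin \spec B$, in which case $B - \lambda I$ is invertible. The elementary identity $\|(B-\lambda I)^{-1}\| = 1/\nu(B-\lambda I)$, a one-line consequence of the definition of the induced norm applied to $y = (B-\lambda I)x$, establishes (i) $\Leftrightarrow$ (ii), while unfolding $\nu(B-\lambda I) = \inf_{\|x\|=1}\|(B-\lambda I)x\|$ immediately identifies (ii) with (iii).

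Next I would turn to (iii) $\Leftrightarrow$ (iv). Away from the spectrum this is straightforward: the identity $\|T^*\| = \|T\|$ for bounded operators gives $\|(B-\lambda I)^{-1}\| = \|(B^*-\bar\lambda I)^{-1}\|$ whenever $B-\lambda I$ is invertible, so $\lambda \in \sppte B \Leftrightarrow \bar\lambda \in \sppte B^*$. The substance is therefore the lemma $\spec B \subset \sppte B \cup \overline{\sppte B^*}$ for every $\eps > 0$. To prove it, fix $\lambda \in \spec B$ and put $C := B - \lambda I$. If $C$ is not bounded below (either not injective, or injective with non-closed range) then $\nu(C) = 0 < \eps$, so $\lambda \in \sppte B$. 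Otherwise $C$ is bounded below with closed proper range, and the Hahn-Banach theorem supplies a non-zero $y^* \in X^*$ vanishing on $\mathrm{range}(C)$; then $C^* y^* = 0$, so $\nu(C^*) = 0 < \eps$ and $\bar\lambda \in \sppte B^*$.

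Finally (i) $\Leftrightarrow$ (v) has two directions. For the inclusion $\bigcup_{\|E\|<\eps}\spec(B+E) \subset \specn_\eps B$, given $\lambda \in \spec(B+E) \setminus \spec B$ with $\|E\| < \eps$, the factorisation $B+E-\lambda I = (I + E(B-\lambda I)^{-1})(B-\lambda I)$ combined with the Neumann series criterion forces $\|E(B-\lambda I)^{-1}\| \geq 1$, so $\|(B-\lambda I)^{-1}\| \geq 1/\|E\| > \eps^{-1}$. Conversely, given $\lambda \in \specn_\eps B \setminus \spec B$, (iii)--(iv) yield either $\lambda \in \sppte B$ or $\bar\lambda \in \sppte B^*$. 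In the first case I would pick $x \in X$ with $\|x\| = 1$ and $\|Cx\| < \eps$, use Hahn-Banach to produce $x^* \in X^*$ with $\|x^*\| = 1$ and $x^*(x) = 1$, and form the rank-one perturbation $Eu := -x^*(u)\, Cx$; then $\|E\| = \|Cx\| < \eps$ and $(C+E)x = 0$, so $\lambda$ is an eigenvalue of $B+E$. The other case is symmetric, yielding a rank-one perturbation exhibiting $\bar\lambda$ as an eigenvalue of $(B+E)^*$, and hence $\lambda \in \spec(B+E)$.

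I expect the main obstacle to be the lemma $\spec B \subset \sppte B \cup \overline{\sppte B^*}$: its content is that on a general Banach space every failure of invertibility of $B - \lambda I$ must manifest as either approximate non-injectivity on $X$ or approximate non-injectivity on $X^*$, and the Hahn-Banach step relating non-surjectivity with closed range to adjoint non-injectivity needs care. Once this lemma is in place the rest is routine, although some additional attention is required to ensure the strict-versus-non-strict inequalities line up correctly in (v) on general Banach spaces, an issue clarified in \cite{Shargorodsky2008}.
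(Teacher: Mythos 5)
Your proof is correct. The paper offers no proof of its own---it simply refers to \cite[Section 4]{TrefEmbBook}---and your argument (the lower-norm identity $\|(B-\lambda I)^{-1}\|=1/\nu(B-\lambda I)$ for (i)--(iii), the Hahn--Banach dichotomy $\spec B\subset \sppte B\cup\overline{\sppte B^*}$ for (iv), and rank-one perturbations for (v)) is essentially the standard one found there; the only points worth polishing are that, under the paper's Kato convention of anti-linear functionals for $X^*$, the perturbation $Eu=-x^*(u)\,Cx$ should be built from a \emph{linear} norming functional so that $E$ is itself linear, that norming functionals need not attain their norm so the strict inequality $\|Cx\|<\eps$ must be used to absorb a $(1-\delta)^{-1}$ factor, and that the strict inequalities in (i) and (v) already match exactly (the issue clarified in \cite{Shargorodsky2008} concerns $\overline{\specn_\eps B}$ versus $\speps B$, not this equivalence).
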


\noindent For a proof of the equivalence of (i)-(v), and a useful short introduction to the pseudospectra of linear operators on Banach spaces, see \cite[Section 4]{TrefEmbBook}. We will use the equivalence of (i)-(iv) throughout. The equivalence of the other definitions with (v), and the connection this makes with spectra of perturbed operators, is a significant motivation for the practical interest in pseudospectra. It is clear from the above definition that $\specn_\eps B$ is an open set for $\eps>0$. An elementary but important property of the lower norm is that
\begin{equation} \label{eq:lowernorm}
|\nu(A)-\nu(B)| \leq \|A-B\|,
\end{equation}
for any bounded linear operators $A$ and $B$ on $X$.

In the case when, for some $N\in\N$, $X=\C^N$ and $B$ is an $N\times N$ matrix, (i)-(v) are equivalent additionally to $\specn_\eps B= \{\lambda\in \C:\nu(B-\lambda I)<\eps\} = \sppte B$. If $\|\cdot\|=\|\cdot\|_2$, then, for every $N\times N$ matrix $A$, $\nu(A)=s_{\mathrm{min}}(A)$, the smallest singular value of $A$. Thus these definitions are additionally equivalent to \cite{TrefEmbBook}
\begin{equation} \label{eq:sing}
\specn_\eps B = \{\lambda\in\C: s_{\mathrm{min}}(B-\lambda I) < \eps \}.
\end{equation}
Note that (\ref{eq:lowernorm}) implies that
\begin{equation} \label{eq:contdep}
|s_{\mathrm{min}}(B-\lambda I)-s_{\mathrm{min}}(B- \mu I)|\leq |\lambda-\mu|, \quad \lambda,\mu\in\C.
\end{equation}
It is  equation (\ref{eq:sing}) that we use for the numerical computations of pseudospectra in Section \ref{sec:final}.

An alternative definition of the pseudospectrum is to replace the strict inequality $>$ in (i) by $\geq$, so that the $\eps$-pseudospectrum is defined to be
$$
\speps B = \spec B \cup \{\lambda\in \C: \|(B-\lambda I)^{-1}\| \geq \eps^{-1}\}.
$$
This has the attraction that $\speps B$, like $\spec B$, is a compact set for $\eps>0$. An interesting question is whether $\overline{\specn_\eps B} = \speps B$, which hinges on the question of whether or not it is possible for the norm of the resolvent of $B$, $\|(B-\lambda I)^{-1}\|$, to take a finite constant value on a open set $G\subset\C$. Let us say that the Banach space $X$ has the {\em strong maximum property} if, for every open set $G\subset \C$, every bounded linear operator $B$ on $X$, and every $M>0$, it holds that
$$
(\|(B-\lambda I)^{-1}\|\leq M, \;\forall\,\lambda\in G) \Rightarrow (\|(B-\lambda I)^{-1}\|< M, \;\forall\,\lambda\in G).
$$
If $X$ has the strong maximum property, then no bounded linear operator on $X$ can have a resolvent norm with a constant finite value on an open subset of $\C$, and it is easy to see that $\overline{\specn_\eps B} = \speps B$.
Recently, Shargorodsky \cite{Shargorodsky2008} has shown, by constructing explicit counterexamples, that not every Banach space has the strong maximum property. But
the following theorem from \cite{Shargorodsky2008}, which extends earlier work of \cite{Globevnik1976}, makes clear that the Banach spaces of relevance to this paper do have this property.

\begin{theorem} \label{thm_resl_norm} Suppose that $X$ is a Banach space which is either finite-dimensional or is such that either $X$ or $X^*$ is complex uniformly convex (as defined e.g.\ in \cite{Shargorodsky2008}). Then $X$ has the strong maximum property. In particular, $X$ has the strong maximum property if  $X$ is a Hilbert space, or if $X = \ell^p(S)$, for $S=\N$ or $\Z$ and $p\in [1,\infty]$.
\end{theorem}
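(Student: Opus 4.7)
The plan is to prove the strong maximum property in two steps: first, to reduce it to a non-constancy statement via the subharmonicity of the resolvent norm, and second, to establish this non-constancy using either elementary algebraic facts (finite-dimensional case) or the generalised maximum modulus theorem of Globevnik \cite{Globevnik1976} and Shargorodsky \cite{Shargorodsky2008} (complex uniformly convex case).

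For the reduction, I would observe that for any bounded linear operator $B$ on a complex Banach space $X$, the resolvent $R(\lambda) := (B-\lambda I)^{-1}$ is operator-valued holomorphic on $\C \setminus \spec B$. A standard Hahn--Banach argument combined with the subharmonicity of $\log|g|$ for scalar holomorphic $g$ shows that $\log \|R(\lambda)\|$ is itself subharmonic on the resolvent set: it equals the supremum, over unit $y \in X$ and $\varphi \in X^*$, of the subharmonic functions $\log|\varphi(R(\lambda)y)|$. The classical maximum principle for subharmonic functions then yields that if $\|R(\lambda)\| \leq M$ on a connected open $G$ and equals $M$ at some interior point, the norm is identically $M$ on $G$. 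Hence the strong maximum property reduces to the statement that $\|R(\cdot)\|$ is not locally constant on any non-empty open subset of the resolvent set.

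For this non-constancy in the finite-dimensional case, the entries of $R(\lambda)$ are rational functions of $\lambda$, so $\|R(\lambda)\|$ is real-algebraic in $(\mathrm{Re}\,\lambda, \mathrm{Im}\,\lambda)$ on each component of the resolvent set; constancy on an open subset of a component would force constancy on the entire unbounded component, contradicting $\|R(\lambda)\| \to 0$ as $|\lambda| \to \infty$. For infinite-dimensional $X$ with $X$ or $X^*$ complex uniformly convex, the non-constancy is essentially the content of Shargorodsky's extension of Globevnik's theorem, which implies that the operator norm of an operator-valued holomorphic function on such a space cannot be locally constant unless the function itself is locally constant; but $R'(\lambda_0) = -R(\lambda_0)^2$ is non-zero whenever $R(\lambda_0)$ is invertible, so $R(\cdot)$ is never locally constant on the resolvent set. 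This gives the general assertion.

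To specialise to the examples named in the theorem, it suffices to verify complex uniform convexity of $X$ or $X^*$. Hilbert spaces and $\ell^p(S)$ with $1 < p < \infty$ are uniformly convex in the usual (real) sense by Clarkson's inequalities, and hence are complex uniformly convex; $\ell^1(S)$ is complex uniformly convex by Globevnik's original result \cite{Globevnik1976}, so $p=1$ is covered directly. The case $p=\infty$ is the most subtle, since neither $\ell^\infty(S)$ nor its actual dual is complex uniformly convex in the usual sense; here one exploits $\ell^\infty(S) = (\ell^1(S))^*$ together with the form of the theorem in \cite{Shargorodsky2008} accommodating dual spaces whose preduals are complex uniformly convex. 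The main obstacle I anticipate is extracting the precise operator-valued form of the Globevnik--Shargorodsky non-constancy result and handling the $\ell^\infty$ case correctly; once these ingredients are in place, the rest of the argument is a routine application of the subharmonic maximum principle.
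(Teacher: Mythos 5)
The paper does not actually prove this theorem: it is quoted verbatim from \cite{Shargorodsky2008}, so your task is really to reconstruct Shargorodsky's argument. Your first step --- reducing the strong maximum property, via subharmonicity of $\log\|(B-\lambda I)^{-1}\|$ and the maximum principle, to the assertion that the resolvent norm cannot be constant on a non-empty open subset of the resolvent set --- is correct and standard. The genuine gap is in the step that is meant to do all the work in the infinite-dimensional case. The statement you attribute to Globevnik--Shargorodsky, namely that for $X$ (or $X^*$) complex uniformly convex an operator-valued holomorphic function with locally constant operator norm must itself be locally constant, is false even when $X$ is a Hilbert space: take $X=\C^2$ with the Euclidean norm and $F(\lambda)=\mathrm{diag}(1,\lambda)$ for $\lambda\in\D$, so that $\|F(\lambda)\|=\max(1,|\lambda|)=1$ is constant while $F$ is not. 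Complex uniform convexity of $X$ does not transfer to the operator norm on the space of bounded operators on $X$, which is exactly why the resolvent-norm question is delicate and why counterexamples exist on other Banach spaces. Shargorodsky's actual proof avoids any operator-valued rigidity statement: it applies a quantitative, vector-valued Schwarz-type lemma (the quantitative form of Globevnik's theorem for $X$-valued analytic functions) to $\lambda\mapsto (B-\lambda I)^{-1}x$ for a vector $x$ that nearly attains the supposed constant value $M$ at a point $\lambda_0$, concluding that $\|R(\lambda_0)^2x\|=\|R'(\lambda_0)x\|$ is small, and then contradicts this using the resolvent structure via $\|R(\lambda_0)^2x\|\geq\|R(\lambda_0)x\|/\|B-\lambda_0 I\|$. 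Without this (or an equivalent substitute) your argument does not close; the derivative identity $R'=R^2$ alone is not enough.

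Two further points. In the finite-dimensional case, real-algebraicity of $\|R(\lambda)\|$ is not available for an arbitrary norm on $\C^n$ (the unit ball need not be semialgebraic), and even where it holds, constancy on an open subset of one component of the resolvent set does not propagate to the unbounded component, so the appeal to $\|R(\lambda)\|\to 0$ at infinity is unjustified; the finite-dimensional case is instead handled by the same near-maximiser argument, which simplifies because the supremum defining the norm is attained. Finally, your treatment of $p=\infty$ rests on a misstatement: the dual of $\ell^\infty(S)$ is an abstract $L$-space, hence isometric to an $L^1(\mu)$ by Kakutani's representation theorem, and $L^1$-spaces \emph{are} complex uniformly convex (a result of Globevnik recorded in \cite{Shargorodsky2008}); so $X^*$ itself satisfies the hypothesis and no separate predual version of the theorem is needed.
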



  It is clear from (v) and standard operator perturbation arguments (see \cite{TrefEmbBook} for details) that,  for $0<\eps < \eps^\prime$,  $\spec B \subset \specn_\eps B \subset \specn_{\eps^\prime} B$, and that
  \begin{equation} \label{eq:thing}
  \eps\D + \spec B\subset \specn_\eps B.
  \end{equation}
  In fact \cite{TrefEmbBook} $\eps\D + \spec B = \specn_\eps B$ if $X$ is a Hilbert space and $B$ is normal, i.e.\ $BB^*=B^*B$. Further \cite{TrefEmbBook}
\begin{equation} \label{eq_int}
\spec B = \bigcap_{\eps>0} \specn_\eps B.
\end{equation}
Generalising (\ref{eq:thing}), it holds that \cite{TrefEmbBook}
\begin{equation} \label{eq_int4}
\delta\D + \specn_\eps B \subset \specn_{\delta +\eps} B, \quad \mbox{for} \quad \eps,\delta>0.
\end{equation}

 For $S,T\subset \C$, let
 \begin{equation} \label{eq_HD}
 \dist(S,T) := \max(\sup\{\dist(z,S):z\in T\}, \sup\{\dist(z,T):z\in S\}).
 \end{equation}
 (This notion of distance, when applied to compact subsets of $\C$,  is an instance of the Hausdorff distance between compact subsets of a metric space.) Given a sequence $T_n\subset \C$ and $T\subset \C$, let us write $T_n\to T$ if $\dist(T_n,T)\to 0$ as $n\to\infty$. Additionally, let us write $T_n\nearrow T$ if $T_n\to T$ and $T_n\subset T$ for each $n$, and write $T_n\searrow T$ if $T_n\to T$ and $T\subset T_n$ for each $n$.
It is an easy calculation to show that
\begin{equation} \label{eq_pseconv2}
\specn_\eps B\searrow \spec B \;\mbox{ as }\;\eps \to 0^+.
\end{equation}
 Similarly, it holds for $\eps>0$ that $\specn_{\eps^\prime} B\searrow \speps B$, as $\eps^\prime \to \eps^+$, and $\specn_{\eps^\prime}B\nearrow \specn_\eps B$, as $\eps^\prime \to \eps^-$. Thus,  in the case where $X$ has the strong maximum property so that $\overline{\specn_\eps B} = \speps B$, it holds for $\eps>0$ that
 \begin{equation} \label{eq_pseconv}
 \specn_{\eps^\prime} B\searrow \specn_\eps B, \mbox{ as }\eps^\prime \to \eps^+, \;\mbox{ and }\;\specn_{\eps^\prime}B\nearrow \specn_\eps B, \mbox{ as }\eps^\prime \to \eps^-,
\end{equation}
so that $\specn_\eps B$ depends continuously on $\eps$.

The spectrum and $\eps$-pseudospectra are connected to the numerical range. In the case that $X$ is a Hilbert space with inner product $(\cdot,\cdot)$,  and where $B$ is a bounded linear operator on $X$, the {\em numerical range } or {\em field of values} of $B$, denoted $W(B)$, is the set
$$
W(B) := \{(Bx,x):x\in X, \,\|x\|=1\}.
$$
It is well known that this numerical range is a convex set and that $\spec B\subset \overline{W(B)}$, in fact $\spec B \subset W(B)$ if $X$ is finite-dimensional. The relationship with the $\eps$-pseudospectra is that, similarly, $\specn_\eps B \subset W(B) + \eps\D$, for $\eps>0$ \cite[Section 17]{TrefEmbBook}. Let $Y$ be a closed subspace of $X$, $P:X\to Y$ orthogonal projection onto $Y$, and let $B_Y:= PB|_Y$. Then
\begin{equation} \label{eq:numr}
W(B_Y) = \{(B_Yx,x):x\in Y, \, \|x\|=1\} =\{(Bx,x):x\in Y, \, \|x\|=1\}\subset W(B).
\end{equation}
This observation is one component in the following result \cite[Theorem 3.52]{HagenRochSilbermann:C*}:

\begin{theorem} \label{thm:nrconv}
 Suppose that $X$ is a Hilbert space and that $(P_n)_{n\in\N}$ is a sequence of orthogonal projection operators on $X$ that converges strongly to the identity operator ($P_nx\to x$ as $n\to\infty$, for every $x\in X$). Then, for every bounded linear operator $B$ on $X$, where $B_n:= P_nB|_{X_n}$ with $X_n = P_n(X)$, it holds that
 $$
 W(B_n) \nearrow W(B) \quad \mbox{as} \quad n\to\infty.
 $$
\end{theorem}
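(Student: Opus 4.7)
The plan is to verify the two parts of the conclusion $W(B_n)\nearrow W(B)$ separately: first the inclusion $W(B_n)\subset W(B)$ for every $n$, then the Hausdorff convergence $\dist(W(B),W(B_n))\to 0$.

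For the inclusion, I would apply equation (\ref{eq:numr}) directly with $Y=X_n=P_n(X)$. Since $P_n$ is an orthogonal projection, $X_n$ is a closed subspace and $P_n$ is orthogonal projection onto $X_n$, so (\ref{eq:numr}) gives
$$
W(B_n)=\{(B_n x,x):x\in X_n,\ \|x\|=1\}=\{(Bx,x):x\in X_n,\ \|x\|=1\}\subset W(B),
$$
where the middle equality uses that $P_n$ is self-adjoint and $P_n x=x$ for $x\in X_n$, so $(P_n Bx,x)=(Bx,P_nx)=(Bx,x)$.

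For the convergence, given $z\in W(B)$ write $z=(Bx,x)$ for some unit vector $x\in X$. Since $P_n x\to x$, we have $\|P_n x\|\to 1$, so $\|P_n x\|\neq 0$ for all large $n$, and I can set $x_n:=P_n x/\|P_n x\|\in X_n$, a unit vector. Then $z_n:=(B_n x_n,x_n)=(Bx_n,x_n)\in W(B_n)$ by the computation above, and $x_n\to x$ in $X$, so by continuity of the inner product and boundedness of $B$,
$$
z_n=(Bx_n,x_n)\longrightarrow (Bx,x)=z.
$$
Thus $\dist(z,W(B_n))\to 0$ for every $z\in W(B)$.

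To upgrade this pointwise statement to the uniform statement $\sup_{z\in W(B)}\dist(z,W(B_n))\to 0$, I would use that $W(B)$ is bounded (contained in the disc of radius $\|B\|$) and therefore totally bounded, combined with the fact that $\dist(\cdot,W(B_n))$ is $1$-Lipschitz. Concretely, for any $\eps>0$ cover $W(B)$ by finitely many balls of radius $\eps/2$ centered at points $z_1,\dots,z_k\in W(B)$, choose $N$ so that $\dist(z_i,W(B_n))<\eps/2$ for all $i$ and all $n\ge N$, and combine with the Lipschitz estimate to obtain $\dist(z,W(B_n))<\eps$ for every $z\in W(B)$. Since $W(B_n)\subset W(B)$ handles the other half of the Hausdorff distance trivially, this gives $\dist(W(B),W(B_n))\to 0$, completing the proof.

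The only genuinely delicate point is justifying the normalization $x_n=P_n x/\|P_n x\|$ and checking that the identity $(B_n x_n,x_n)=(Bx_n,x_n)$ really uses only that $P_n$ is self-adjoint and idempotent with range $X_n$; everything else is routine. Note that total boundedness of $W(B)$ is essential, since without it, pointwise convergence of $\dist(\cdot,W(B_n))$ to zero would not be enough to yield uniform convergence.
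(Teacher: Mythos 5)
Your proof is correct. Note that the paper does not actually prove this theorem: it is quoted from \cite[Theorem 3.52]{HagenRochSilbermann:C*}, with only the inclusion half --- your first step --- recorded as equation (\ref{eq:numr}) in the preceding discussion. Your argument for that half is the same computation (self-adjointness and idempotence of $P_n$ give $(P_nBx,x)=(Bx,x)$ for $x\in X_n$). The convergence half, which the paper leaves to the reference, you supply correctly: normalising $P_nx$ to get unit vectors $x_n\in X_n$ with $x_n\to x$ yields pointwise convergence $\dist(z,W(B_n))\to 0$ for each $z\in W(B)$, and the upgrade to uniformity via total boundedness of $W(B)\subset\|B\|\,\overline{\D}$ and the $1$-Lipschitz property of $\dist(\cdot,W(B_n))$ is exactly what is needed, since the inclusion $W(B_n)\subset W(B)$ kills the other half of the Hausdorff distance (\ref{eq_HD}). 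The only (harmless) degenerate case you pass over is that $X_n$ could be trivial for finitely many small $n$, making $W(B_n)$ empty; this does not affect the asymptotic statement.
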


\section{Results by Limit Operator Arguments} \label{sec:lo}

Let us start this section by establishing a few additional notations and definitions.
 Throughout the remainder of the paper, if $B$ is a bounded linear operator on a Banach space $X$ we will say that $B$ is Fredholm if $B(X)$, the range of $B$, is closed and if, additionally,  $\alpha(B):= \dim (\ker B)$, the dimension of the null-space of $B$, and $\beta(B) := \dim (X/B(X))$, the co-dimension of the range of $B$, are both finite, in which case we define the index of $B$ by $\ind B := \alpha(B)-\beta(B)$.
 We will let $\spess B$ denote the {\em essential spectrum} of $B$, i.e.\ the set of $\lambda\in\C$ for which $B-\lambda I$ is not Fredholm.  Let $M_b$ be the bounded linear operator which operates on the standard sequence space $\ell^p(\Z)$, for $p\in[1,\infty]$, by multiplication by $b\in\ell^\infty(\Z)$. Explicitly, for $y\in\ell^p(\Z)$,
$$
(M_b y)_j = b_jy_j, \quad j\in\Z.
$$
Moreover, for $k\in\Z$ let $V_k$ denote the shift operator defined by
$$
(V_ky)_j = y_{j-k}, \quad j\in\Z,
$$
and note that $V_jM_b= M_{V_{j}b}V_j$, for $j\in\Z$, $b\in \ell^\infty(\Z)$.
In terms of these notations, the operators $A^b$ and $A^{b,c}$, corresponding to the infinite matrices (\ref{eq:Ab}) and (\ref{eq:Abc}), can be written as
\begin{equation} \label{eq:Abrep}
A^b = V_1M_b + V_{-1} \quad \mbox{ and } \quad A^{b,c} = V_1M_b + M_cV_{-1}.
\end{equation}
We will use these notations for $b,c\in \ell^\infty(\Z)$,  but especially for $b,c\in \{\pm 1\}^\Z$.

One major tool for computing the spectrum of the infinite matrices $A^b$ and $A^{b,c}$,
with $b,c\in\ell^\infty(\Z)$, is the method of so-called limit operators
\cite{CWLi2008:Memoir,LiBook,RaRoSiBook}. In this method a bi-infinite matrix  $B$ is studied in terms of
a family of infinite matrices that represents the behaviour of $B$ at
infinity. More precisely, let $A$ be a banded matrix $A=(a_{ij})_{i,j\in\Z}$, with $\sup_{ij}|a_{ij}|<\infty$, so that the operator induced by $A$ is a bounded operator on $\ell^p(\Z)$, for all $p\in [1,\infty]$. We say that the operator induced by the matrix
$B=(b_{ij})_{i,j\in\Z}$ is a {\sl limit operator} of the operator induced by $A$
 if, for a sequence
$h_1,h_2,...$ of integers with $|h_k|\to\infty$, it holds that
\[
a_{i+h_k,j+h_k}\ \to\ b_{ij}\qquad\textrm{as}\qquad k\to\infty,
\]
for all $i,j\in\Z$. The set of all limit operators of $A$ is denoted by
$\opsp(A)$. In some instances it is useful to think of $\opsp(A)$ as the union of two  subsets, as $\opsp(A) = \opspp(A)\cup \opspm(A)$, where $\opsppm(A)$ denotes the subset of those limit operators associated with sequences $h$ with $h_k\to\pm\infty$. It is an easy consequence of the Bolzano-Weierstrass theorem and a diagonal argument that each of $\opsppm(A)$ is non-empty, and it is clear that if $B=(b_{ij})$ is a limit operator of $A$ then $\sup_{i-j=k}|b_{ij}|\leq\sup_{i-j=k}|a_{ij}|$, for every $k\in\Z$. In particular, if $A=A^{b,c}$ for some $b,c\in \{\pm1\}^\Z$ and $B$ is a limit operator of $A$, then $B= A^{\tilde b,\tilde c}$ for some $\tilde b,
\tilde c\in \{\pm 1\}^\Z$.

 The following theorem, which applies in particular to $A^b$ and to $A^{b,c}$, connects the essential spectrum with the set of limit operators. This result is a particular case of much more general results from
 \cite{CWLi2008:FC}, \cite[Theorem 6.28, Corollary 6.49]{CWLi2008:Memoir},  which extend a main theorem on limit operators going back to \cite{LaRa1985,RaRoSi1998}.
 Note that the spectrum, as an operator on $\ell^p(\Z)$, of an infinite banded matrix $A=(a_{ij})_{i,j\in\Z}$, with $\sup_{ij}|a_{ij}|<\infty$, does not depend on $p\in [1,\infty]$, and the same is true for the essential spectrum: moreover, if $\lambda \not\in \spess A$, then $\ind (A-\lambda I)$ is also independent of $p$ (see \cite{LiWiener} or \cite[Corollary 6.49]{CWLi2008:Memoir}).

\begin{theorem} \label{thm_lo} Let $A$ be a banded matrix $A=(a_{ij})_{i,j\in\Z}$, with $\sup_{ij}|a_{ij}|<\infty$. Then
\begin{equation} \label{eq:lo1}
\spess A = \bigcup_{B\in \opsp(A)} \spec B = \bigcup_{B\in \opsp(A)} \sppt B
\end{equation}
and $\specn_\eps^p B \subset \specn_\eps^p A$, for all $\eps>0$, $p\in [1,\infty]$, and $B\in \opsp(A)$.
In particular, if $A \in \opsp(A)$, in which case we say that $A$ is {\em self-similar}, then
$$
\spec A = \spess A = \bigcup_{B\in \opsp(A)} \spec B = \bigcup_{B\in \opsp(A)} \sppt B \;\mbox{ and }\; \specn_\eps^p A = \bigcup_{B\in \opsp(A)} \specn_\eps^p B,
$$
for $\eps>0$ and $p\in [1,\infty]$.
\end{theorem}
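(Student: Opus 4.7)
The essential-spectrum identities are not really something to prove from scratch: they are the specialization, to bi-infinite banded matrices with numerical (scalar, not operator-valued) entries, of the main limit-operator theorem in \cite{CWLi2008:FC,CWLi2008:Memoir}. Banded matrices with bounded entries lie inside the ``rich'' band-dominated operator class treated there, and the $p$-independence of $\spess$ and $\ind$ noted just before the theorem makes the representation uniform in $p\in[1,\infty]$. I would therefore just quote this result, having first checked that the hypotheses apply.

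The pseudospectral inclusion $\specn_\eps^p B \subset \specn_\eps^p A$ is the step that warrants explicit argument. The key is the trivial shift-invariance
\[
\specn_\eps^p\bigl(V_{-h_k} A V_{h_k}\bigr)\;=\; \specn_\eps^p A,\qquad k\in\N,
\]
which holds because each $V_m$ is an isometric isomorphism of $\ell^p(\Z)$. Fix $\lambda\in \specn_\eps^p B$. If $\lambda\in \spec B$, then the essential-spectrum identity gives $\lambda\in \spess A \subset \spec A \subset \specn_\eps^p A$. Otherwise, by Theorem \ref{thm_pse}(ii)--(iii) there exists a unit vector $x\in\ell^p$ with $\|(B-\lambda I)x\|_p < \eps-\delta$ for some $\delta>0$. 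For $1\le p<\infty$ I would approximate $x$ in $\ell^p$-norm by a compactly supported unit vector $x'$ with $\|(B-\lambda I)x'\|_p<\eps-\delta/2$. Because $A$ is banded and $x'$ is finitely supported, only finitely many matrix entries enter into $(V_{-h_k}AV_{h_k})x'$, and the defining entrywise convergence $a_{i+h_k,j+h_k}\to b_{ij}$ forces $(V_{-h_k}AV_{h_k})x'\to Bx'$ in $\ell^p$. Hence for $k$ large $\nu_p(V_{-h_k}AV_{h_k}-\lambda I)\le \|(V_{-h_k}AV_{h_k}-\lambda I)x'\|_p < \eps$, and by shift-invariance $\lambda\in \specn_\eps^p A$.

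The endpoint $p=\infty$ is the main obstacle, since finitely supported vectors are not $\|\cdot\|_\infty$-dense in $\ell^\infty$ and an $\ell^\infty$-pseudoeigenvector need not admit a compactly supported approximation. My plan is to bypass this by transposition: for a banded, bounded-entry bi-infinite matrix $C$ one computes directly that $\|C\|_\infty = \|C^T\|_1$, and $C-\lambda I$ is invertible on $\ell^\infty$ iff $C^T-\lambda I$ is invertible on $\ell^1$ (as one sees by transposing the relation defining the inverse), so $\specn_\eps^\infty C = \specn_\eps^1 C^T$. From the defining entrywise convergence one reads off that limit operators commute with transposition: $B\in \opsp(A)$ via shifts $h_k$ implies $B^T\in \opsp(A^T)$ via the same shifts. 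Combining, $\specn_\eps^\infty B = \specn_\eps^1 B^T \subset \specn_\eps^1 A^T = \specn_\eps^\infty A$ by the case $p=1$ already established.

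The self-similar statements then fall out mechanically. Given $A\in \opsp(A)$, the trivial inclusion $\spec A\subset \bigcup_{B\in \opsp(A)} \spec B$, combined with $\spess A = \bigcup_{B\in \opsp(A)} \spec B\subset \spec A$, forces $\spec A = \spess A$ to equal each of the two stated unions. For the pseudospectra, $\specn_\eps^p A\subset \bigcup_{B\in \opsp(A)}\specn_\eps^p B$ is trivial upon choosing $B=A$, and the reverse inclusion is precisely what the previous two paragraphs proved.
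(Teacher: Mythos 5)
Your proposal is correct, and it is in one sense more self-contained than the paper, which offers no proof of this theorem at all: the whole statement is presented as a special case of the general limit-operator results of \cite{CWLi2008:FC} and \cite[Theorem 6.28, Corollary 6.49]{CWLi2008:Memoir}, and the pseudospectral inclusion that you prove by hand is precisely \cite[Theorem 5.12(ix)]{CWLi2008:Memoir}, which the paper later invokes in the proof of Theorem \ref{prop:ineq resolvent}. For the essential-spectrum identities you and the paper therefore do the same thing (cite), and that is the right call --- the identity $\spess A=\bigcup_{B\in\opsp(A)}\spec B=\bigcup_{B\in\opsp(A)}\sppt B$, in particular the sufficiency half and the reduction to $\ell^\infty$-eigenvalues, is genuinely deep and not reproducible in a few lines. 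Where you add value is the direct argument for $\specn_\eps^p B\subset\specn_\eps^p A$, and it is sound: $(V_{-h_k}AV_{h_k})_{ij}=a_{i+h_k,j+h_k}$, so the conjugated operators have the same $p$-norm pseudospectra as $A$ and converge entrywise to $B$; for $p<\infty$ a normalised, compactly supported approximate pseudoeigenvector of $B-\lambda I$ (which exists by Theorem \ref{thm_pse}(ii)--(iii) once the case $\lambda\in\spec B$ has been absorbed into $\spess A\subset\spec A$) transfers to $V_{-h_k}AV_{h_k}$ for large $k$ because only finitely many entries are involved; and the $p=\infty$ endpoint is correctly disposed of by passing to the transpose on $\ell^1$, using $\|C\|_\infty=\|C^T\|_1$, the invertibility duality between an operator on $\ell^1$ and its adjoint on $\ell^\infty$, and the observation that $B^T\in\opsp(A^T)$ via the same shifts. (An alternative at $p=\infty$, in the spirit of the paper's own proof of Theorem \ref{thm_pseudo_converg}, would be to multiply a bounded pseudoeigenvector by a slowly varying cut-off such as $\omega_N$; your duality route is cleaner and exploits the two-sided structure of $\Z$.) The deduction of the self-similar statements from the two inclusions is routine and correctly carried out.
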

Recall that $\sppt B$ is the set of eigenvalues of $B$ in $\ell^\infty(\Z)$, so that $\lambda \in \sppt B$ iff $\lambda x = Bx$ has a non-trivial bounded solution $x$.

One case where $A^{b,c}$ is self-similar is where $(b,c)$ is periodic with some period $n\in \N$, i.e.\
\begin{equation} \label{per}
b_{j+n} = b_j, \quad c_{j+n}=c_j, \quad j\in\Z.
\end{equation}
In this case the above theorem applied to $A^{b,c}$ reduces to $\spec A^{b,c} = \spess A^{b,c} = \sppt A^{b,c}$, and in fact it is well-known further, e.g.\ \cite{Davies2007:Book}, that if $\lambda\in \spec A^{b,c}$ then $\lambda x = A^{b,c}x$ has a solution which is not only bounded but also quasi-periodic, i.e.\, for some $\alpha\in \C$ with $|\alpha|=1$, $x_{k+n} = \alpha x_k$, $k\in\Z$. It is easy to see that this implies that
\begin{equation} \label{perspec}
\spec A^{b,c} = \bigcup_{|\alpha|=1} \spec\left(A_n^{b,c}+B_{n,\alpha}^{b,c}\right),
\end{equation}
where $A_n^{b,c}$ is given by (\ref{Abcn}) (with $A_1^{b,c}:=(0)$) and $B_{n,\alpha}^{b,c}$ is the $n\times n$ matrix whose entry in row $i$, column $j$ is $\delta_{i,n}\delta_{j,1} \alpha c_n+\delta_{i,1}\delta_{j,n} \alpha^{-1} b_n$, where $\delta_{ij}$ is the Kronecker delta. We will abbreviate $B_{n,\alpha}^{b,c}$ as $B_{n,\alpha}^{b}$ in the case that $c = (1,...,1)$.

An important case where $A^{b}$ is self-similar is where $A^{b}$ is {\em pseudo-ergodic} in the sense of Davies \cite{Davies2001:PseudoErg}. The following is a specialisation of the definition from  \cite{Davies2001:PseudoErg}.

\begin{definition} \label{pse} Call $b\in \{\pm1\}^\Z$ and the
operator $A^{b}$  {\em pseudo-ergodic} if, for every $N\in \N$ and every $w \in \{\pm 1\}^N$, there exists $J\in \Z$ such that
$b_{n+J} = w_n$, for $n=1,...,N$.
\end{definition}

We see from this definition that $A^b$ is pseudo-ergodic if and only if every finite sequence of $\pm1$'s appears somewhere in the bi-infinite sequence $b$. The significance of this definition is that, for many cases where the entries $b_n$ are random variables, the sequence $b$ is pseudo-ergodic with probability one. In particular, the following lemma follows easily from the Second Borel Cantelli Lemma (e.g.\ \cite[Theorem 8.16]{CapinskiKopp}), the argument sometimes called the `Infinite Monkey Theorem'.
\begin{lemma} \label{imt}
If the matrix entries $b_n$, for $n\in\Z$, are iid random variables taking the values $\pm 1$ with $\mathrm{Pr}(b_n=1)\in (0,1)$, then $A^b$ is pseudo-ergodic with probability one.
\end{lemma}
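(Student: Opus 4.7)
The plan is to reduce pseudo-ergodicity of $b$ to a countable family of almost-sure events. Since $\bigcup_{N\in\N}\{\pm 1\}^N$ is countable, and a countable intersection of probability-one events still has probability one, it suffices by subadditivity to prove the following for each fixed $N\in\N$ and each fixed word $w\in\{\pm1\}^N$: with probability one, there exists $J\in\Z$ with $b_{J+n}=w_n$ for $n=1,\dots,N$.

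To prove this fixed-$w$ statement I would use the standard disjoint-block trick so as to obtain independent events and invoke the Second Borel--Cantelli Lemma. Partition $\Z$ into consecutive blocks of length $N$, setting $I_k:=\{kN+1,\dots,kN+N\}$ for $k\in\Z$, and define the event
\[
E_k := \{b_{kN+n}=w_n \text{ for } n=1,\dots,N\}.
\]
Because the blocks $I_k$ are pairwise disjoint and the $b_j$ are iid, the events $(E_k)_{k\in\Z}$ are mutually independent. Each has the same probability
\[
p := \prod_{n=1}^{N}\mathrm{Pr}(b_1=w_n),
\]
which is strictly positive since $\mathrm{Pr}(b_1=1)\in(0,1)$ forces both $\mathrm{Pr}(b_1=1)$ and $\mathrm{Pr}(b_1=-1)$ to be positive. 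Hence $\sum_{k\in\Z}\mathrm{Pr}(E_k)=\infty$, so by the Second Borel--Cantelli Lemma infinitely many $E_k$ occur almost surely; in particular at least one does, yielding a valid $J=kN$.

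Taking the intersection over the countable collection of pairs $(N,w)$ then gives, with probability one, that every finite $\pm 1$-word appears as a consecutive block in $b$, which is precisely the definition of pseudo-ergodicity in Definition \ref{pse}. There is no real obstacle here: the only point to be careful about is to choose the blocks disjoint so that independence holds and Second Borel--Cantelli applies directly, rather than attempting to use the (much weaker) First Borel--Cantelli-type bounds on overlapping occurrences.
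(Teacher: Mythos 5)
Your proof is correct and is precisely the ``Infinite Monkey Theorem'' argument via the Second Borel--Cantelli Lemma that the paper itself invokes (the paper states the lemma without a detailed proof, citing exactly this argument). The disjoint-block construction giving independent events of equal positive probability, followed by a countable intersection over all words, is the standard and intended route.
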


The link to limit operators is provided by the following lemma (see \cite[Lemma 6]{Davies2001:PseudoErg}, \cite[Corollary 3.70]{LiBook} or \cite[Theorem 7.6]{CWLi2008:Memoir}):

\begin{lemma} \label{pe_lo}
For $b\in\{\pm1\}^\Z$, $A^{b}$ is {\em pseudo-ergodic} if and only if $\opsp(A^b) = \{A^c:c\in\{\pm 1\}^\Z\}$.
\end{lemma}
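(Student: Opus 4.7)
\emph{Forward direction.} Assume $A^b$ is pseudo-ergodic. The inclusion $\opsp(A^b)\subseteq\{A^c:c\in\{\pm1\}^\Z\}$ is a structural observation already recorded in the paragraph preceding Theorem \ref{thm_lo}: any limit operator of $A^b$ is tridiagonal with super-diagonal entries equal to $1$ (the pointwise limit of the constant super-diagonal of $A^b$) and sub-diagonal entries in $\{\pm1\}$ (since a limit of a $\pm1$-valued sequence must itself lie in $\{\pm1\}$). For the reverse inclusion, I fix $c\in\{\pm1\}^\Z$ and aim to produce integers $h_1,h_2,\dots$ with $|h_k|\to\infty$ such that $(A^b)_{i+h_k,j+h_k}\to(A^c)_{ij}$ for all $i,j\in\Z$. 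In view of the formula $(A^b)_{ij}=b_{i-1}\delta_{i-1,j}+\delta_{i+1,j}$, this reduces to requiring $b_{m+h_k}\to c_m$ for each fixed $m\in\Z$, and I would achieve this by arranging $b_{m+h_k}=c_m$ for all $|m|\le k$.

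The construction of such $h_k$ with $|h_k|\to\infty$ is the main obstacle, since Definition \ref{pse} only asserts that each finite pattern occurs in $b$ at least once. I would bridge the gap via the following pigeonhole sub-lemma: every $w\in\{\pm1\}^N$ occurs at infinitely many positions in $b$. Indeed, if $w$ occurred at only finitely many positions, then each of the $2^K$ length-$(N+K)$ extensions $wv$ with $v\in\{\pm1\}^K$ would have to occur somewhere in $b$ by pseudo-ergodicity, but distinct extensions force distinct occurrence positions (otherwise a single window of $b$ would simultaneously match two different $\pm1$-strings), contradicting the finite bound once $K$ is large. Any infinite subset of $\Z$ is unbounded in modulus, so applying this sub-lemma to the length-$(2k+1)$ window $(c_{-k},\dots,c_k)$ and inductively choosing $h_k$ with $|h_k|\ge k$ at which this window appears in $b$ (translated appropriately) produces the desired sequence.

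\emph{Converse direction.} Assume $\opsp(A^b)=\{A^c:c\in\{\pm1\}^\Z\}$, and let $w=(w_1,\dots,w_N)\in\{\pm1\}^N$. Choose any $c\in\{\pm1\}^\Z$ with $c_{n-1}=w_n$ for $n=1,\dots,N$. By hypothesis $A^c\in\opsp(A^b)$, so there is a sequence $(h_k)$ with $|h_k|\to\infty$ along which $b_{n+h_k-1}\to c_{n-1}=w_n$ for each $n\in\{1,\dots,N\}$. Since the terms are $\pm1$-valued, convergence upgrades to eventual equality, so setting $J:=h_k-1$ for any sufficiently large $k$ yields $b_{n+J}=w_n$ for $n=1,\dots,N$, verifying pseudo-ergodicity in the sense of Definition \ref{pse}.
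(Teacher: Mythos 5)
Your proof is correct. Note that the paper does not actually prove this lemma: it simply cites \cite[Lemma 6]{Davies2001:PseudoErg}, \cite[Corollary 3.70]{LiBook} and \cite[Theorem 7.6]{CWLi2008:Memoir}, so you have supplied a self-contained argument where the paper defers to the literature. The substance of your argument matches the standard one in those references. The inclusion $\opsp(A^b)\subseteq\{A^c:c\in\{\pm1\}^\Z\}$ is indeed the structural observation recorded just before Theorem \ref{thm_lo}, and the converse direction is the routine upgrade of convergence of $\pm1$-valued sequences to eventual equality. The genuine content is exactly where you located it: Definition \ref{pse} only guarantees one occurrence of each finite word, whereas producing a limit operator requires occurrences at positions $h_k$ with $|h_k|\to\infty$. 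Your pigeonhole sub-lemma --- that a word occurring at only $M$ positions could not accommodate the $2^K>M$ pairwise-distinct starting positions forced by its $2^K$ length-$(N+K)$ extensions --- closes this gap cleanly, and the final diagonal step (choosing $h_k$ with $|h_k|\ge k$ realising the window $(c_{-k},\dots,c_k)$) correctly yields entrywise convergence of the shifted matrices to $A^c$. The index bookkeeping ($b_{m+h_k}\to c_m$ being what is needed for $(A^b)_{i+h_k,j+h_k}\to(A^c)_{ij}$, the superdiagonal being constant) is also right. No gaps.
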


Combining this lemma with Theorem \ref{thm_lo} gives the following characterisation of the spectrum and pseudospectrum of $A^b$ in the case when $b$ is pseudo-ergodic:
\begin{theorem} \label{thm_lo2} If  $b\in\{\pm1\}^\Z$ and $A^b$ is pseudo-ergodic, then
\begin{equation} \label{eq:spm}
\spec A^{b} = \spess A^{b} = \bigcup_{c\in \{\pm1\}^\Z} \spec A^c = \Sigma := \bigcup_{c\in \{\pm1\}^\Z} \sppt A^c
\end{equation}
and
\begin{equation}
\specn_\eps^p\,A^{b}\ = \Sigma_\eps^p := \bigcup_{c\in\{\pm1\}^\Z} \specn_\eps^p A^c, \label{eq:spm2}
\end{equation}
for $\eps>0$ and $p\in [1,\infty]$.
\end{theorem}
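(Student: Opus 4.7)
The plan is to deduce this theorem as an immediate corollary of Lemma \ref{pe_lo} and Theorem \ref{thm_lo}. The key observation is that pseudo-ergodicity of $b$ forces $A^b$ to be self-similar in the sense of Theorem \ref{thm_lo}, at which point the self-similar half of that theorem supplies both conclusions at once.

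First I would verify self-similarity. Since $b \in \{\pm 1\}^\Z$, Lemma \ref{pe_lo} gives $\opsp(A^b) = \{A^c : c \in \{\pm 1\}^\Z\}$, and this set trivially contains $A^b$ itself (take $c = b$). Hence $A^b \in \opsp(A^b)$, so $A^b$ is self-similar.

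Next I would apply the self-similar part of Theorem \ref{thm_lo} to $A = A^b$. That theorem yields
$$
\spec A^b = \spess A^b = \bigcup_{B\in\opsp(A^b)} \spec B = \bigcup_{B\in\opsp(A^b)} \sppt B
$$
and, for every $\eps > 0$ and $p \in [1,\infty]$,
$$
\specn_\eps^p A^b = \bigcup_{B\in\opsp(A^b)} \specn_\eps^p B.
$$
Substituting the description of $\opsp(A^b)$ from Lemma \ref{pe_lo} into these identities replaces the union over $B \in \opsp(A^b)$ by the union over $c \in \{\pm 1\}^\Z$ of the corresponding spectral sets of $A^c$, producing exactly \eqref{eq:spm} and \eqref{eq:spm2}.

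There is essentially no obstacle here; the real work has already been done in Theorem \ref{thm_lo} (the general limit-operator characterisation of essential spectrum and pseudospectrum of banded bi-infinite matrices) and in Lemma \ref{pe_lo} (the identification of the limit operator family of a pseudo-ergodic $A^b$). The only mild point to be careful about is to note that all limit operators of $A^b$ are automatically of the form $A^c$ with $c \in \{\pm 1\}^\Z$, since each $c_j$ is a limit of a sequence drawn from $\{\pm 1\}$ and hence lies in $\{\pm 1\}$; this justifies writing the union over $\{\pm 1\}^\Z$ rather than over some larger index set, but this is exactly the content already packaged into Lemma \ref{pe_lo}.
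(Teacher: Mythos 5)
Your proposal is correct and is exactly the paper's argument: the paper derives Theorem \ref{thm_lo2} by combining Lemma \ref{pe_lo} (which identifies $\opsp(A^b)$ as $\{A^c : c\in\{\pm1\}^\Z\}$, hence in particular shows $A^b$ is self-similar) with the self-similar case of Theorem \ref{thm_lo}. Nothing is missing.
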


Limit operator ideas, the ``Infinite Monkey'' argument and the validity of
the first two ``='' signs in (\ref{eq:spm}) are not new in the spectral
theory of random matrices (see e.g.
\cite{CarmonaLacroix,Davies2001:SpecNSA,Davies2001:PseudoErg,GoldKoru,PasturFigotin}). Equation (\ref{eq:spm2}) is previously shown, for a general class of pseudo-ergodic operators for the case $p=2$ in \cite{Davies2001:PseudoErg}. What is more recent
is the third ``='' sign in the first of equations (\ref{eq:spm}) and the extensions to $p\in [1,\infty]$, these
 shown in \cite{CWLi2008:FC} and  \cite[Theorem 6.28, 7.6]{CWLi2008:Memoir}.

Note that the above theorem shows that the spectrum of $A^b$ is the same set $\Sigma$ for every pseudo-ergodic $b\in\{\pm1\}^\Z$, and that $\spec A^c \subset \Sigma$ for every $c\in \{\pm 1\}^\Z$, and that similar statements hold for the pseudospectrum $\specn_\eps^p A^b$. In particular, $\spec A^c \subset \Sigma$ if $c\in \Pi_n$, for some $n\in\N$, where $\Pi_n :=\{c\in \{\pm1\}^\Z: c \mbox{ is $n$-periodic}\}$. Thus
\begin{equation} \label{eq:spperntri}
\pi_n\ :=\ \bigcup_{c\in\Pi_n}\spec A^c\ =\ \bigcup_{c\in\Pi_n}\sppt
A^c \subset \Sigma,
\end{equation}
for every $n\in\N$: this is informative as $\pi_n$ can be computed explicitly by (\ref{perspec}) as the union of eigenvalues of $n\times n$ matrices. The following lemma carries out this computation for $n=1,2,3$.
\begin{lemma} \label{periods123}
If $b\in \Pi_1$ with $b_0=1$, then $\spec A^b = [-2,2]$ and $\spec A^{-b}= \ri [-2,2]$. If $b\in \Pi_2\setminus \Pi_1$ then $\spec A^b = \tau_2 := \{x\pm\ri x: -1\leq x\leq 1\}$. If $b\in \Pi_3$, $b_0=b_1=1$, and $b_2=-1$, then
$$
\spec A^b = \tau_3 := \ri[-1,1] \cup \{x+\ri y:-1/2\leq y\leq 1/2, \; x^2 = 1+3y^2\}
$$
while $\spec A^{-b} = \ri \tau_3$. Thus
$$
\pi_1 = [-2,2]\cup \ri[-2,2], \quad \pi_2 = \pi_1  \cup \tau_2, \quad \pi_3 = \pi_1 \cup \tau_3 \cup \ri\tau_3.
$$
Note that $\max_{\lambda\in \pi_1} |\lambda| = 2$ while $\max_{\lambda\in \tau_j}|\lambda|=\sqrt{2}$, for $j=2,3$. For $j=2$ this maximum is achieved at $\pm 1\pm \ri$, while for $j=3$ this maximum is achieved at $\pm \sqrt{7}/2 \pm \ri/2$.
\end{lemma}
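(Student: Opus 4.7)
The plan is to apply the periodic-spectrum formula (\ref{perspec}) in each of the cases $n = 1, 2, 3$, reducing the computation of $\spec A^b$ for $b\in\Pi_n$ to finding the eigenvalues of the $n\times n$ matrix $A_n^b + B_{n,\alpha}^b$ as $\alpha$ traverses the unit circle. Cyclic shifts of $b$ do not change $\spec A^b$ (by conjugation with a power of the unitary shift $V_1$), so within each period class one representative suffices.

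For $n = 1$ the relevant $1\times 1$ matrix is $\alpha + \alpha^{-1}b_1$: with $b_1 = 1$ this parametrises $[-2, 2]$, with $b_1 = -1$ it parametrises $\ri[-2, 2]$, giving the $\Pi_1$ claims. For $n = 2$ with $b \in \Pi_2 \setminus \Pi_1$, a direct determinant computation gives eigenvalues $\pm\sqrt{(1 + \alpha^{-1}b_2)(b_1 + \alpha)}$; both choices of $(b_1, b_2)$ collapse this to $\pm\sqrt{\alpha - \alpha^{-1}} = \pm\sqrt{2\ri \sin\theta}$, whose image as $\theta$ runs over $[0, 2\pi]$ (taking care with principal roots on each sign of $\sin\theta$) is $\{x \pm \ri x : x \in [-1, 1]\} = \tau_2$.

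The $n = 3$ case is the main calculation. Direct expansion yields the characteristic polynomial $\lambda^3 - \lambda - (\alpha - \alpha^{-1})$, so that $\spec A^b = \{\lambda \in \C : \lambda^3 - \lambda \in \ri[-2, 2]\}$. Writing $\lambda = x + \ri y$, the vanishing of the real part of $\lambda^3 - \lambda$ factors as $x(x^2 - 3y^2 - 1) = 0$, which cleanly splits the locus into two branches: on $x = 0$ the imaginary part $-y(y^2 + 1)$ is monotone in $y$ and lies in $[-2, 2]$ exactly when $y \in [-1, 1]$, producing the segment $\ri[-1, 1]$; on $x^2 = 1 + 3y^2$ the imaginary part $2y(1 + 4y^2)$ is again monotone, so lies in $[-2, 2]$ precisely when $y \in [-1/2, 1/2]$, producing the stated arc. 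The $-b$ case follows from the similarity $D^{-1} A^b D = \ri A^{-b}$ with $D_{jj} = \ri^{\,j}$ (a one-line verification using $V_j M_b = M_{V_j b} V_j$), which gives $\spec A^{-b} = -\ri\spec A^b$; combined with $\spec A^b = -\spec A^b$ (conjugation by $\mathrm{diag}((-1)^j)$), this equals $\ri\tau_3$. The maximum-modulus claims are then immediate calculus: on $\tau_2$, $|x \pm \ri x| = |x|\sqrt 2$ is maximised at $x = \pm 1$, and on the $\tau_3$ arc, $|x + \ri y|^2 = 1 + 4y^2$ is maximised at $y = \pm 1/2$ where $x^2 = 7/4$. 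The only step I expect to require genuine care is the algebraic structure of the $n = 3$ locus — the factorisation $x(x^2 - 3y^2 - 1) = 0$ and the sharp $y$-range on each branch — while the other cases reduce to one-parameter computations in $\alpha$.
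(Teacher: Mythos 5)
Your proposal is correct and follows essentially the same route as the paper: apply the periodic-spectrum formula (\ref{perspec}) for $n=1,2,3$, reduce to $\lambda^2=2\ri\sin\theta$ and $\lambda^3-\lambda\in\ri[-2,2]$, split the cubic locus via the factorisation $x(x^2-3y^2-1)=0$ with the ranges $y\in[-1,1]$ and $y\in[-1/2,1/2]$ on the two branches, and deduce the $-b$ cases from the diagonal similarity with $\ri^k$ (the paper's Lemma \ref{lem_symm}). The computations of $\pi_n$ and the maximum-modulus points agree with the paper's.
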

\begin{proof} If $b\in \Pi_1$ with $b_0=\beta = \pm 1$ then, from (\ref{perspec}), $\spec A^b = \cup_{|\alpha|=1} \spec B^b_{1,\alpha} = \{\re^{\ri\theta} + \re^{-\ri \theta} \beta :\theta \in \R\}$. So  $\spec A^b= [-2,2]$ if $\beta=1$ and $\spec A^b=\ri[-2,2]$ if $\beta = -1$, and $\pi_1=[-2,2]\cup \ri[-2,2]$.

If $b\in \Pi_2\setminus \Pi_1$ then, from (\ref{perspec}), where $\beta = b_1 = \pm 1$,
$$
\spec A^b = \bigcup_{\theta\in\R} \spec \left(
                                         \begin{array}{cc}
                                           0 & 1 -\re^{-\ri\theta}\beta \\

                                           \beta +\re^{\ri\theta} & 0 \\
                                         \end{array}
                                       \right) = \{\lambda\in \C: \lambda^2 = 2\ri \sin \theta, \; \theta\in\R\}.
$$
Thus $\spec A^b = \tau_2$ and $\pi_2 = \pi_1 \cup \tau_2$.

If $b\in \Pi_3$, $b_0=b_1=1$, and $b_2=-1$, then, from (\ref{perspec}),
$$
\spec A^b = \bigcup_{\theta\in\R} \spec \left(
                                         \begin{array}{ccc}
                                           0 & 1 & -\re^{-\ri\theta} \\
                                           1 & 0 & 1 \\
                                           \re^{\ri\theta} & 1 & 0 \\
                                         \end{array}
                                       \right) = \{\lambda\in \C: \lambda^3-\lambda = -2\ri\sin\theta, \; \theta\in\R\}.
$$
Writing $\lambda = x+\ri y$, we see that $\lambda^3-\lambda = -2\ri\sin\theta$, for some $\theta\in\R$, iff
$$
x(x^2-3y^2-1)= 0 \quad \mbox{and} \quad 3x^2y-y^3-y \in [-2,2].
$$
But this implies that either $x=0$ and $y^3+y\in [-2,2]$, or $x^2 = 3y^2+1$ and $8y^3+2y\in [-2,2]$, and it follows that $\spec A^b = \tau_3$. That $\spec A^{-b}= \ri\tau$ can be shown similarly, or follows from Lemma \ref{lem_symm} below. Since $c\in \Pi_3$ iff $c= \pm V_j b$ for $j=0,1$ or 2, it follows that $\pi_3 = \pi_1\cup \tau_3\cup \ri\tau_3$.
\end{proof}

In Figure \ref{fig:30pics1} we plot $\pi_n$ for $n=5,10,...,30$, with $\pi_n$ computed numerically in Matlab using the characterisation (\ref{perspec}) (see \cite{CWChonchaiyaLindner2011} for small plots of $\pi_n$ for $n=1,2,...,30$). For each $n$ the set $\pi_n$, by the characterisation (\ref{perspec}), consists of $k\leq n2^n$ analytic arcs, and $\pi_n\subset \Sigma$. The visual impression that might be taken from this sequence of plots is that $\pi_n$ ``fills out'' a large part of the square $\Delta :=\{x+\ri y: x,y\in \R, \, |x|+|y| < 2\}$ as $n\to\infty$. But of course $\pi_\infty := \cup_{n\in\N}\pi_n$ is a countable union of analytic arcs, so that $\pi_\infty$ has (two-dimensional) Lebesgue measure 0. Thus almost every point in $\Delta$ is not in $\pi_\infty$ and so is not one of the points in the plots in Figure \ref{fig:30pics1}. Thus these figures provide no evidence that the Lebesgue measure of $\Sigma$ is any larger than zero. And indeed it was conjectured in \cite{HolzOrlZee} that $\Sigma$ has fractal dimension in the range $(1,2)$ (and so Lebesgue measure zero). That this is not the case was shown in \cite{CWChonchaiyaLindner2011} by an application of Theorem \ref{thm_lo2}, specifically by constructing a sequence $b\in \{\pm 1\}^\Z$ for which $\sppt A^b \supset \D$, the open unit disc. Of course, this implies by Theorem \ref{thm_lo2} the following result.

\noindent
\ifpics{  
\begin{center}
\begin{tabular}{ccc}
\includegraphics[width=0.3\textwidth]{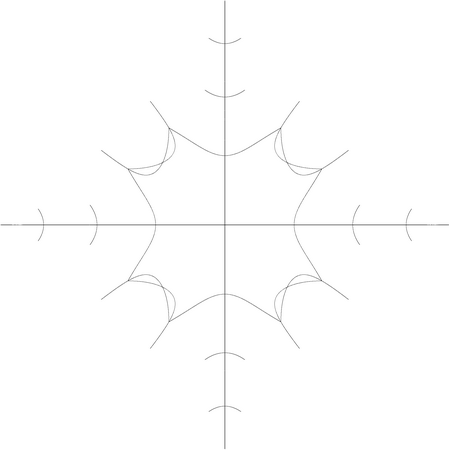}&
\includegraphics[width=0.3\textwidth]{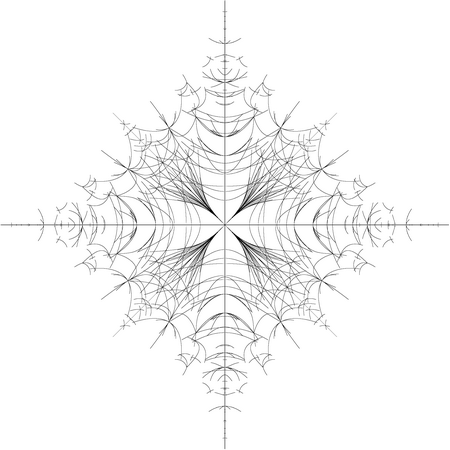}&
\includegraphics[width=0.3\textwidth]{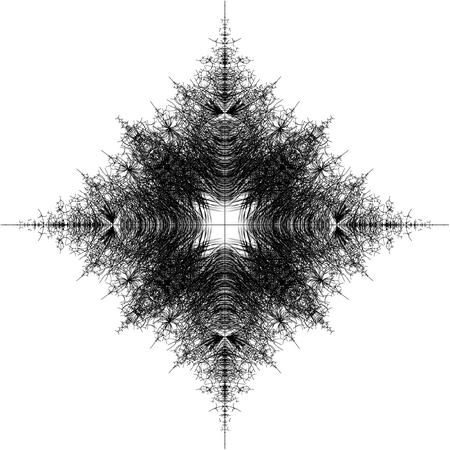}\\
\includegraphics[width=0.3\textwidth]{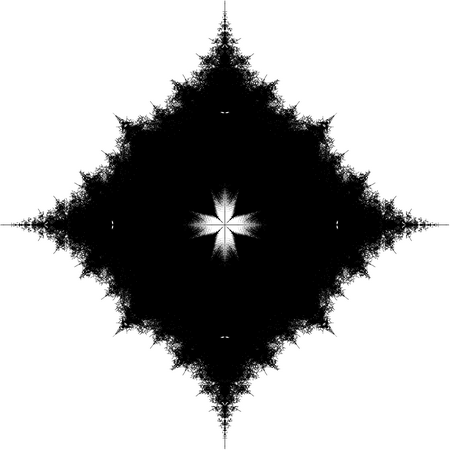}&
\includegraphics[width=0.3\textwidth]{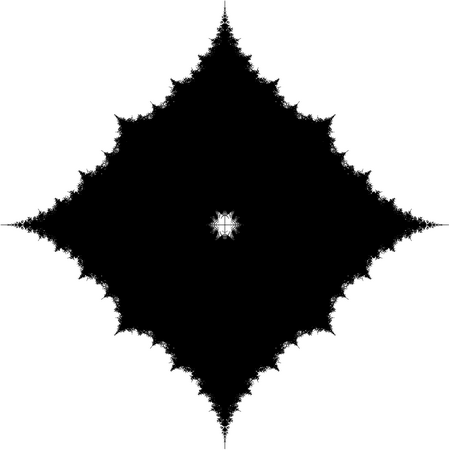}&
\includegraphics[width=0.3\textwidth]{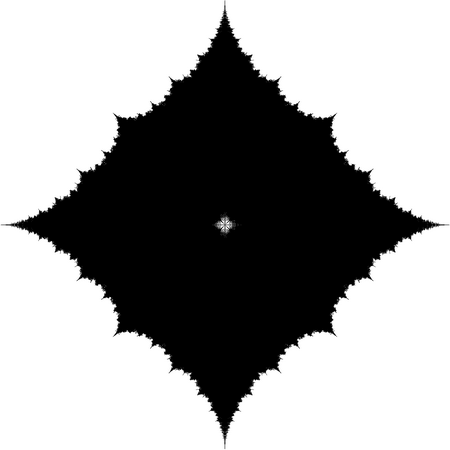}
\end{tabular}
\end{center}
}\fi 
\begin{figure}[h]
\caption{\footnotesize Our figure shows the sets $\pi_n$, as defined
in (\ref{eq:spperntri}), for $n=5,10,...,30$, computed using the characterisation (\ref{perspec}), which is made explicit for $n=1,2$ and 3 in Lemma \ref{periods123}. In particular, $\pi_1 = [-2,2]\cup \ri[-2,2]$ and, for each $n$, $\pi_1\subset \pi_n$ and, by Lemma \ref{lem_nr}, $\pi_n\subset \overline{\Delta} = \{x+\ri y: x,y\in \R, \, |x|+|y| \leq 2\}$.} \label{fig:30pics1}
\end{figure}

\begin{theorem}\cite[Proposition 2.1]{CWChonchaiyaLindner2011} \label{thm_sier} $\overline{\D} \subset \Sigma$.
\end{theorem}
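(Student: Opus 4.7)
Since $\Sigma$, being the spectrum of a bounded operator, is closed, it suffices to prove the open statement $\D\subset\Sigma$. By Theorem \ref{thm_lo2},
$$
\Sigma \;=\; \bigcup_{c\in\{\pm1\}^\Z}\sppt A^c,
$$
so the task reduces to exhibiting, for each $\lambda\in\D$, some $c\in\{\pm1\}^\Z$ admitting a nonzero $x\in\ell^\infty(\Z)$ that satisfies the three-term recurrence
$$
x_{n+1} \;=\; \lambda\, x_n \;-\; c_{n-1}\,x_{n-1}, \qquad n\in\Z.
$$
My target is the stronger assertion that a single $b\in\{\pm1\}^\Z$ can be constructed with $\D\subset\sppt A^b$; since $\Sigma$ is closed this delivers $\overline\D\subset\Sigma$.

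I would set up the associated $2\times 2$ transfer matrices $T_{\pm}(\lambda)$ that propagate the vector $(x_n,x_{n-1})^T$ (two choices according to $c_{n-1}=\pm 1$), and then build $b$ hierarchically. The goal is to find finite $\pm$-words $W_1,W_2,\ldots$ of increasing length whose block transfer-matrix products $T(W_k;\lambda):=T_{W_k(|W_k|)}(\lambda)\cdots T_{W_k(1)}(\lambda)$ remain bounded in operator norm, uniformly on a compact exhaustion $K_k\nearrow\D$. Concatenating these blocks symmetrically about the origin, $b=\ldots W_3W_2W_1\,|\,W_1W_2W_3\ldots$, forces the two-sided transfer products to be uniformly bounded on each $K_k$. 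For each $\lambda\in\D$, selecting initial data in the stable eigen-direction of the associated products then yields a bi-infinite bounded solution $x\in\ell^\infty(\Z)$ of $A^b x=\lambda x$, so $\lambda\in\sppt A^b$, as required.

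The crux is the construction of the blocks $W_k$. Individually, both $T_{+}(\lambda)$ and $T_{-}(\lambda)$ are generically hyperbolic (eigenvalues of modulus strictly off $1$) for $\lambda\in\D$, so no fixed word repeated indefinitely can work. The blocks $W_k$ must be designed so that polynomial cancellations in the entries of $T(W_k;\cdot)$ enforce uniform boundedness across $K_k$. After passing to the ratio variable $u_n:=x_n/x_{n-1}$, this amounts to finding, for each compact $K_k\subset\D$, an orbit-bounded trajectory of the iterated function system $\{M_+,M_-\}$ on the Riemann sphere, with $M_\pm(u):=\lambda\mp 1/u$, such that the same symbolic orbit works simultaneously for every $\lambda\in K_k$. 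Producing such a family of blocks is the combinatorial/analytic heart of the argument, and I expect it to be the main obstacle; once the blocks are in hand, the limit-operator framework of Section \ref{sec:lo} and the $\sppt$-characterisation (\ref{eq:spm}) of $\Sigma$ complete the proof.
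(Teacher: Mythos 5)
Your reduction is exactly the route the paper takes: it derives Theorem \ref{thm_sier} from the existence of a single $b\in\{\pm1\}^\Z$ with $\sppt A^b\supset\D$, combined with Theorem \ref{thm_lo2} (which gives $\sppt A^c\subset\Sigma$ for every $c$) and the compactness of $\Sigma$. That part of your argument is correct and is a one-liner. The problem is that the remaining step --- actually constructing such a $b$ --- is the entire mathematical content of the theorem (the paper does not prove it here either; it cites \cite[Proposition 2.1]{CWChonchaiyaLindner2011} for precisely this construction), and your proposal leaves it as an acknowledged open obstacle. A plan that says ``find blocks $W_k$ whose transfer products are uniformly bounded on a compact exhaustion of $\D$'' is a restatement of what must be proved, not a proof; as written, the proposal establishes nothing beyond the easy reduction.

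Beyond being incomplete, the sketch has internal difficulties you would need to resolve. First, uniform boundedness of each block product $T(W_k;\lambda)$ on $K_k$ does not give boundedness of the two-sided partial products: an infinite product of matrices each of norm at most $C_k\geq 1$ can still diverge (and since $\det T_\pm(\lambda)=\pm1$, no block product can have norm below $1$, so you cannot hope for contractivity). You need control of the \emph{partial} products, which is a genuinely different and harder statement. Second, the roles of ``bounded products'' and ``stable eigen-direction'' are in tension: if the two-sided products really are uniformly bounded, every nonzero initial vector gives a bounded solution and no selection is needed; if they are not, then a direction bounded under iteration to $+\infty$ and one bounded to $-\infty$ are generically distinct lines in $\C^2$, and the symmetric concatenation $b=\ldots W_2W_1\,|\,W_1W_2\ldots$ does not by itself reconcile them. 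The construction in \cite{CWChonchaiyaLindner2011} circumvents these issues by building the bounded eigenvector $x$ explicitly (its entries are, up to signs, powers of $\lambda$, exploiting the fact that when $x_{k-1}=0$ the sign $b_{k-1}$ is irrelevant and the recurrence resets), rather than by norm estimates on transfer-matrix products. Without that, or some equivalently explicit mechanism, your argument does not go through.
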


\noindent Recently \cite{CWDavies2011}, an alternative proof of this theorem has been obtained, through a construction that shows that $\pi_\infty$ is dense in $\D$. It is an open (and interesting) question as to whether $\pi_\infty$ is dense in $\Sigma$. An interesting, related, case where the union of the spectra of all periodic operators is shown to be dense in the spectrum of the pseudo-ergodic case is studied in \cite{LiBiDiag}, but there are other pseudo-ergodic bi-infinite tridiagonal examples where this is not true.

The above results concern bi-infinite matrices, but similar results apply to the semi-infinite matrices $A^b_+$ and $A^{b,c}_+$. We say that the operator induced by the bi-infinite matrix
$B=(b_{ij})_{i,j\in\N}$ is a {\sl limit operator} of the operator induced
by the banded semi-infinite matrix $A_+=(a_{ij})_{i,j\in\N}$ if, for a sequence
$h_1,h_2,...$ of integers with $h_k\to+\infty$, it holds that
\[
a_{i+h_k,j+h_k}\ \to\ b_{ij}\qquad\textrm{as}\qquad k\to\infty,
\]
for all $i,j\in\Z$. The set of all limit operators of $A_+$ is denoted by
$\opsp(A_+)$. An equivalent characterisation is that $\opsp(A_+) = \opspp(\tilde A_+)$, where, for any semi-infinite matrix $A_+$,  $\tilde A_+$ is the bi-infinite matrix defined by $\tilde A_+ = (\tilde a_{ij})_{i,j\in\Z}$, where $\tilde a_{ij} := a_{ij}$, $i,j\in\N$, $\tilde a_{ij}:=0$, otherwise. The following version of Theorem \ref{thm_lo} holds in the semi-infinite case. In its results on the pseudospectrum this theorem appears to be new and may be of independent interest. The arguments in this theorem and in later sections depend on the following lemma which, in its results for the pseudospectrum, generalises \cite[Theorem 2.4(iii)]{TrefEmbBook} from the finite-dimensional Hilbert space case to an infinite-dimensional Banach space setting, and so may also be of independent interest.

\begin{lemma} \label{lem_sub} Suppose that $X$ is a Banach space which can be written as the direct sum of two closed subspaces as $X = X_1\oplus X_2$, by which we mean that each $x\in X$ can be written in a unique way as $x=x_1+x_2$ with $x_1\in X_1$ and $x_2\in X_2$, and that there exists a continuous projection operator $P_1:X\to X_1$ (in which case $P_2=I-P_1$ is a projection operator onto $X_2$). Suppose also that $A$ is a bounded linear operator on $X$ which has $X_1$ and $X_2$ as invariant subspaces, and let $A_j$ denote $A$ restricted to $X_j$, for $j=1,2$. Then
$\spec A = \spec A_1 \cup \spec A_2$, $\spess A = \spess A_1 \cup \spess A_2$, and $\specn_\eps A_j \subset \specn_\eps A$, for $\eps>0$, and $j=1,2$.
If, for some $p\in [1,\infty]$, it holds for every $x_1\in X_1$ and $x_2\in X_2$ that $\|x_1+x_2\| = \|(\|x_1\|,\|x_2\|)\|_p$,  then also
$\specn_\eps A = \specn_\eps A_1 \cup \specn_\eps A_2$, for $\eps>0$.
\end{lemma}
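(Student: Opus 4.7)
The plan is to exploit the block-diagonal structure that invariance of $X_1$ and $X_2$ under $A$ forces: relative to the decomposition $X = X_1 \oplus X_2$, the operator $A - \lambda I$ restricts to $A_j - \lambda I$ on each summand, and since the $P_j$ are continuous the direct sum behaves like an internal coordinate representation. For the spectral equality, I would first prove $\spec A \subset \spec A_1 \cup \spec A_2$ by constructing, whenever both $A_j - \lambda I$ are invertible, the direct-sum resolvent $R(y_1 + y_2) := (A_1 - \lambda I)^{-1} y_1 + (A_2 - \lambda I)^{-1} y_2$, which is bounded because of continuity of the $P_j$, and verifying that $R$ is a two-sided inverse of $A - \lambda I$. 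Conversely, if $A - \lambda I$ is invertible on $X$, then for any $y_j \in X_j$ the unique solution $x = x_1 + x_2$ of $(A - \lambda I) x = y_j$ satisfies $(A_1 - \lambda I) x_1 + (A_2 - \lambda I) x_2 = y_j$, and uniqueness of the direct-sum decomposition yields surjectivity of $A_j - \lambda I$; injectivity of $A_j - \lambda I$ is immediate from injectivity of $A - \lambda I$ on $X_j$, so the bounded inverse theorem closes the argument. The essential spectrum identity follows by the same splitting: $\ker(A - \lambda I)$, the range $(A - \lambda I)(X) = (A_1 - \lambda I)(X_1) \oplus (A_2 - \lambda I)(X_2)$, and the quotient $X/(A - \lambda I)(X)$ all decompose as direct sums of the analogous objects for $A_j - \lambda I$, so both closedness of the range and finite (co)dimensionality of kernel/cokernel transfer cleanly, using continuity of the projections to push closedness between $X_j$ and $X$.

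For the inclusion $\specn_\eps A_j \subset \specn_\eps A$, characterisation (iii) of Theorem \ref{thm_pse} does the work at once: if $\lambda \in \specn_\eps A_j \setminus \spec A_j$, pick an $\eps$-pseudoeigenvector $x_j \in X_j$ with $\|x_j\| = 1$ and $\|(A_j - \lambda I) x_j\| < \eps$; regarding $x_j$ as an element of $X$, invariance gives $(A - \lambda I) x_j = (A_j - \lambda I) x_j$, so $x_j$ is also an $\eps$-pseudoeigenvector for $A - \lambda I$ of norm one, and $\lambda \in \specn_\eps A$. Points of $\spec A_j$ are in $\spec A$ by the first part.

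The only real obstacle, and the only step using the $p$-norm hypothesis, is the reverse inclusion $\specn_\eps A \subset \specn_\eps A_1 \cup \specn_\eps A_2$. Given $\lambda \in \specn_\eps A \setminus \spec A$, choose $x = x_1 + x_2 \in X$ with $\|x\| = 1$ and $\|(A - \lambda I) x\| < \eps$. By invariance, $(A - \lambda I) x = (A_1 - \lambda I) x_1 + (A_2 - \lambda I) x_2$ is again a direct-sum decomposition, so the hypothesis yields
\[
\|(A - \lambda I) x\| = \bigl\|\bigl(\|(A_1 - \lambda I) x_1\|,\, \|(A_2 - \lambda I) x_2\|\bigr)\bigr\|_p, \qquad 1 = \bigl\|\bigl(\|x_1\|,\, \|x_2\|\bigr)\bigr\|_p.
\]
Suppose, toward contradiction, that $\nu(A_j - \lambda I) \geq \eps$ for both $j$. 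Then $\|(A_j - \lambda I) x_j\| \geq \eps \|x_j\|$ in each component, and monotonicity of the $p$-norm in each coordinate on $[0,\infty)^2$ gives $\|(A - \lambda I) x\| \geq \eps \|x\| = \eps$, contradicting the strict inequality. Hence $\nu(A_j - \lambda I) < \eps$ for some $j$, and characterisation (ii) of Theorem \ref{thm_pse} places $\lambda \in \specn_\eps A_j$. The only delicate point I anticipate is the endpoint $p = \infty$, where the $p$-norm is a maximum rather than a sum and one must be slightly careful about which summand carries the mass of $x$, but the same monotonicity argument applies without modification.
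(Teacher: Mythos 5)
Your proposal is correct and follows essentially the same route as the paper: the inclusion $\specn_\eps A_j \subset \specn_\eps A$ via pseudoeigenvectors is just the pseudoeigenvector reformulation of the paper's lower-norm inequality $\nu(A_j-\lambda I)\ge\nu(A-\lambda I)$, and your contradiction argument for the reverse inclusion is the contrapositive of the paper's direct computation that $\nu(A-\lambda I)\ge\min(\nu(A_1-\lambda I),\nu(A_2-\lambda I))$ under the $p$-norm hypothesis. The only difference is that you supply proofs of the spectrum and essential-spectrum identities, which the paper simply cites as standard; your sketches of those are sound.
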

\begin{proof} The identities
$\spec A = \spec A_1 \cup \spec A_2$ and $\spess A = \spess A_1 \cup \spess A_2$ are standard, see e.g.\ \cite{Jorgens_book,Davies2007:Book}. By Theorem \ref{thm_pse},  $\specn_\eps B = \spec B \cup \{\lambda\in \C: \nu(B-\lambda I)<\eps\}$. Since $\nu(A_j-\lambda I)\geq \nu(A-\lambda I)$, for all $\lambda\in\C$ and $j=1,2$, it follows that $\specn_\eps A_j \subset \specn_\eps A$, for $\eps>0$, and $j=1,2$. If, for some $p\in [1,\infty]$, it holds for every $x_1\in X_1$ and $x_2\in X_2$ that $\|x_1+x_2\| = \|(\|x_1\|,\|x_2\|)\|_p$,  then, for every $\lambda\in\C$, where $B:= A-\lambda I$ and $B_j := A_j - \lambda I$, for $j=1,2$, it holds for $x_1\in X_1$ and $x_2\in X_2$ that
$$
\|B(x_1+x_2)\| =  \big\|\big(\|Bx_1\|,\|Bx_2\|\big)\big\|_p\geq \big\|\big(\nu(B_1)\|x_1\|,\nu(B_2)\|x_2\|\big)\big\|_p
$$
so that
$$
\nu(B) = \inf_{x_1\in X_1, x_2\in X_2} \frac{\|B(x_1+x_2)\|}{\|x_1+x_2\|}\geq \inf_{x_1\in X_1, x_2\in X_2} \frac{\big\|\big(\nu(B_1)\|x_1\|,\nu(B_2)\|x_2\|\big)\big\|_p}{\big\|\big(\|x_1\|,\|x_2\|\big)\big\|_p}.
$$
But it is an easy calculation that this last infimum has the value $\min(\nu(B_1), \nu(B_2))$. Thus $\specn_\eps A \subset \specn_\eps A_1 \cup \specn_\eps A_2$.
\end{proof}

\begin{theorem} \label{thm_losemi} Let $A_+$ be a semi-infinite banded matrix $A_+=(a_{ij})_{i,j\in\N}$, with $\sup_{ij}|a_{ij}|<\infty$. Then
\begin{equation} \label{eq:lo7}
\spess A_+ = \bigcup_{B\in \opsp(A_+)} \spec B = \bigcup_{B\in \opsp(A_+)} \sppt B.
\end{equation}
Further, $\specn_\eps^p B \subset \specn_\eps^p A_+$, for all $\eps>0$, $p\in [1,\infty]$, and $B\in \opsp(A_+)$.
\end{theorem}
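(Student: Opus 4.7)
The plan is to reduce the theorem to the bi-infinite Theorem \ref{thm_lo} via the extension $\tilde A_+$ defined in the paragraph preceding the statement, and then to clean up the resulting artefacts with a direct approximate-eigenvector argument.

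First I would decompose $\ell^p(\Z) = \ell^p(\N) \oplus \ell^p(\Z \setminus \N)$; both summands are invariant under $\tilde A_+$, with $\tilde A_+$ acting as $A_+$ on the first and as the zero operator on the second, and the $p$-norm additivity hypothesis of Lemma \ref{lem_sub} is exactly the defining property of $\ell^p$. Lemma \ref{lem_sub} then gives
$$
\spess \tilde A_+ = \spess A_+ \cup \{0\}, \qquad \specn_\eps^p \tilde A_+ = \specn_\eps^p A_+ \cup \{\lambda : |\lambda| < \eps\}.
$$
Since all entries of $\tilde A_+$ outside $\N \times \N$ vanish, any limit along $h_k \to -\infty$ is the zero operator, so $\opsp(\tilde A_+) = \opsp(A_+) \cup \{0\}$. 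Applying Theorem \ref{thm_lo} to $\tilde A_+$ and substituting from the two displays above gives
$$
\spess A_+ \cup \{0\} = \bigcup_{B \in \opsp(A_+)} \spec B \cup \{0\} = \bigcup_{B \in \opsp(A_+)} \sppt B \cup \{0\},
$$
together with $\specn_\eps^p B \subset \specn_\eps^p A_+ \cup \{|\lambda| < \eps\}$ for every $B \in \opsp(A_+)$.

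To strengthen the pseudospectral inclusion to $\specn_\eps^p B \subset \specn_\eps^p A_+$ and to remove the spurious $\{0\}$ from the essential-spectrum identity, I would run a direct shift argument. Fix $B \in \opsp(A_+)$ associated with $h_k \to +\infty$. By Theorem \ref{thm_pse}(iv), every $\lambda \in \specn_\eps^p B$ satisfies $\lambda \in \sppte B$ or $\bar\lambda \in \sppte B^*$. In the first case, taking a unit compactly supported $x \in \ell^p(\Z)$ with $\|(B-\lambda I)x\|_p < \eps$ (available by density) and shifting to $V_{h_k} x$, which for large $k$ is supported in $\N$, the entrywise convergence $a_{i+h_k,j+h_k} \to b_{ij}$ combined with the uniform bandwidth of $A_+$ forces $\|(A_+ - \lambda I)V_{h_k}x\|_p \to \|(B-\lambda I)x\|_p < \eps$, so $\lambda \in \sppte A_+$. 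The dual case is analogous, using that the matrix transpose of $B$ is a limit operator of the matrix transpose of $A_+$ along the same sequence $h_k$. Theorem \ref{thm_pse}(iv) applied now to $A_+$ then gives $\specn_\eps^p B \subset \specn_\eps^p A_+$. Sending $\eps \to 0$ via (\ref{eq_int}) yields $\spec B \subset \spec A_+$; to upgrade this to $\spec B \subset \spess A_+$, I would translate an $\sppt B$ eigenvector along a strictly increasing subsequence of shifts to produce infinitely many quasi-null vectors of $A_+ - \lambda I$ with pairwise disjoint supports, obstructing the Fredholm property. Combined with the reverse inclusion inherited from the display above (the $\{0\}$ cancels, since $0 \in \spess A_+$ now follows whenever $0 \in \bigcup_B \spec B$), this yields both essential-spectrum identities.

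The main obstacle will be making the dual half of the pseudospectrum argument uniform across the full range $p \in [1,\infty]$: for $p = 1$ and $p = \infty$ the dual of $\ell^p$ is not itself an $\ell^{p'}$ space, so one must carefully verify that an appropriate compactly supported dual approximate eigenvector of $B^*$ exists (possibly by density in a weak sense) and can be cleanly transported back to $A_+^*$ under the shift in a norm that matches the chosen definition of the Banach space adjoint.
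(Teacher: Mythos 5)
Your reduction via the zero-extension $\tilde A_+$ is sound as far as it goes, but it leaves a genuine gap at the point $\lambda=0$, which is exactly the point that the artefact of the zero extension masks. From $\spess A_+\cup\{0\}=\bigcup_{B\in\opsp(A_+)}\spec B\cup\{0\}$ together with your direct translation argument you obtain $\bigcup_B\spec B\subset\spess A_+$ and $\spess A_+\setminus\{0\}\subset\bigcup_B\spec B$; but to finish (\ref{eq:lo7}) you also need the implication $0\in\spess A_+\Rightarrow 0\in\bigcup_B\spec B$, and your parenthetical remark ("$0\in\spess A_+$ now follows whenever $0\in\bigcup_B\spec B$") supplies only the \emph{converse} implication, which is already contained in the direct inclusion and does not help. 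This is not a removable edge case for this paper: by Lemma \ref{lem_zero}, $0\in\spess A_+^{b,c}$ for every $b,c\in\{\pm1\}^\Z$, so the operators to which the theorem is applied all sit precisely in the problematic situation. The same overlap afflicts the pseudospectral statement, where the extension only yields $\specn_\eps^p B\subset\specn_\eps^p A_+\cup\eps\D$, so the entire burden of the inclusion near the origin falls on your direct shift argument; and there the issues you flag at $p=\infty$ are real (finitely supported sequences are not dense in $\ell^\infty$, so the primal approximate eigenvector must be cut off with a slowly varying weight as in the proof of Theorem \ref{thm_pseudo_converg}, and $(\ell^\infty)^*\neq\ell^1$, so the dual half cannot be transported by the same elementary shift). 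In effect you would be re-proving a semi-infinite version of Theorem \ref{thm_lo} by hand rather than reducing to it.

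The paper avoids all of this with one adjustment to your construction: instead of extending by zero, it extends by $\lambda I$ on the left half-axis with $\lambda>\|A_+\|_p+2\eps$, i.e.\ it applies Theorem \ref{thm_lo} to $A=\tilde A_+ + B$ where $b_{ij}=\lambda$ for $i=j\le 0$ and $b_{ij}=0$ otherwise. Then $\opsp(A)=\opsp(A_+)\cup\{\lambda I\}$ and Lemma \ref{lem_sub} gives $\spess A=\{\lambda\}\cup\spess A_+$ and $\specn_\eps^p A=(\lambda+\eps\D)\cup\specn_\eps^p A_+$; since $\lambda$ lies strictly outside $\spess A_+$, outside $\spec B$ for every $B\in\opsp(A_+)$, and $\lambda+\eps\D$ is disjoint from $\specn_\eps^p B\subset(\|B\|_p+\eps)\D$, the artefacts can be deleted from both sides without any supplementary argument. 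You could rescue your own version by running the zero extension a second time for $A_+-\mu I$ with some $\mu\neq 0$ (moving the artefact off the origin) and intersecting, but the large-$\lambda$ device accomplishes this in a single stroke and disposes of the $p=\infty$ difficulties entirely, since no direct approximate-eigenvector transport is then needed.
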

\begin{proof}
 Given $\eps>0$ and $p\in [1,\infty]$, choose  $\lambda> \|A_+\|_p + 2\eps$ and apply Theorem \ref{thm_lo} to the bi-infinite matrix $A = \tilde A_+ + B$, where $B=(b_{ij})_{i,j\in\Z}$ is defined by $b_{ij}:=\lambda$, if $i=j\leq 0$, $b_{ij}:=0$, otherwise. Since $\opsp(A)= \opspp(A)\cup \opspm(A) = \opsp(A_+)\cup \{\lambda I\}$, we see, applying Lemma \ref{lem_sub}, that
\begin{equation} \label{eq:lo3}
\{\lambda\} \cup \spess A_+ = \spess A = \{\lambda\}\cup \bigcup_{B\in \opsp(A_+)} \spec B = \{\lambda\}\cup\bigcup_{B\in \opsp(A_+)} \sppt B.
\end{equation}
Since $\lambda>\|A_+\|_p\geq \|B\|_p$ is not in $\spess A_+$ or in $\spec B$, for $B\in \opsp(A_+)$, equation (\ref{eq:lo7}) follows. Similarly, applying Lemma \ref{lem_sub}, $\specn_\eps^p A = \specn_\eps^p(\lambda I) \cup \specn_\eps^p A_+ = (\lambda+\eps\D) \cup \specn_\eps^p A_+$. It follows from Theorem \ref{thm_lo} that, for $B\in \opsp(A_+)$, $\specn_\eps^p B \subset \specn_\eps^p A = (\lambda+\eps\D) \cup \specn_\eps^p A_+$. Since $\specn_\eps^p B \subset (\|B\|_p + \eps)\D$ and $\lambda > \|A_+\|_p + 2\eps \geq \|B\|_p + 2\eps$, this implies that $\specn_\eps^p B \subset \specn_\eps^p A_+$.
\end{proof}

One consequence of this result and Theorem \ref{thm_lo} is the following lemma.

\begin{lemma} \label{lem_zero} For $b,c\in\{\pm 1\}^\Z$, $0\in \spess A^{b,c}$ and $0\in \spess A_+^{b,c}$.
\end{lemma}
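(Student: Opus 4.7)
The plan is to deduce both assertions from Theorems \ref{thm_lo} and \ref{thm_losemi}: both of these identities express $\spess$ of the relevant matrix as the union of $\sppt B$ taken over all limit operators $B$. So it suffices to exhibit a single limit operator $B$ of $A^{b,c}$ (resp.\ $A_+^{b,c}$) for which $0 \in \sppt B$. Since every limit operator of $A^{b,c}$ is itself of the form $A^{\tilde b,\tilde c}$ for some $\tilde b,\tilde c\in\{\pm 1\}^\Z$ (as noted after the definition of $\opsp$), and the same is true for limit operators of $A_+^{b,c}$ (which are also bi-infinite operators of this type), it is actually enough to prove the following stronger uniform statement:
$$
0 \in \sppt A^{b,c} \quad \text{for every } b,c\in\{\pm 1\}^\Z.
$$

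The key structural observation is that $A^{b,c}$ has zeros on the main diagonal, so the equation $A^{b,c}x=0$, written out componentwise as
$$
b_{j-1}\,x_{j-1} + c_j\,x_{j+1} \;=\; 0, \qquad j\in\Z,
$$
couples $x_{j-1}$ with $x_{j+1}$ but never with $x_j$. Consequently the equations for even $j$ involve only the odd-indexed entries of $x$, and the equations for odd $j$ involve only the even-indexed entries. I will set $x_{2k+1}=0$ for all $k\in\Z$, which trivially satisfies all equations indexed by even $j$, and then, starting from $x_0=1$, define the even-indexed entries by the two-term recursion
$$
x_{2k+2} \;=\; -\,\frac{b_{2k}}{c_{2k+1}}\,x_{2k}, \qquad x_{2k-2} \;=\; -\,\frac{c_{2k-1}}{b_{2k-2}}\,x_{2k},
$$
coming from the equations indexed by $j=2k+1$. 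Since $b_i,c_i\in\{\pm 1\}$, these quotients are well-defined and have modulus one, so $|x_{2k}|=1$ for every $k\in\Z$. Hence $x\in\ell^\infty(\Z)$ with $\|x\|_\infty=1$, $x\neq 0$, and $A^{b,c}x=0$, giving $0\in\sppt A^{b,c}$.

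Having established this, the conclusion $0\in\spess A^{b,c}$ follows from Theorem \ref{thm_lo} applied to $A^{b,c}$ (any limit operator exists and is of the required form, and satisfies the uniform statement just proved), while $0\in\spess A_+^{b,c}$ follows in the same way from Theorem \ref{thm_losemi}, using that limit operators of $A_+^{b,c}$ are bi-infinite matrices of the form $A^{\tilde b,\tilde c}$ with $\tilde b,\tilde c\in\{\pm 1\}^\Z$. There is essentially no obstacle here: the only thing to verify carefully is the parity decoupling and the fact that the $\pm 1$ hypothesis makes the recursion produce a bounded (unimodular) sequence.
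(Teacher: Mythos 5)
Your proposal is correct and follows essentially the same route as the paper: the paper's proof simply asserts that $0\in\sppt A^{b,c}$ for every $b,c\in\{\pm 1\}^\Z$ ("it is easy to see") and then invokes (\ref{eq:lo1}) and (\ref{eq:lo7}), exactly as you do. Your explicit parity-decoupled construction of the bounded unimodular kernel element is just a worked-out version of the step the paper leaves to the reader.
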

\begin{proof}
It is easy to see that $0\in \sppt A^{b,c}$ for every $b,c\in \{\pm 1\}^\Z$, and the result then follows from equations (\ref{eq:lo1}) and (\ref{eq:lo7}).
\end{proof}

We extend the definition of pseudo-ergodic in Definition \ref{pse} to the semi-infinite case in the obvious way, replacing $\Z$ by $\N$ and $A^b$ by $A^b_+$, so that $b\in \{\pm 1\}^\N$ is pseudo-ergodic iff the sequence $b$ contains every finite pattern of $\pm 1$'s. Then Lemma \ref{imt}  holds with $\Z$ replaced by $\N$ and $A^b$ replaced by $A^b_+$, and Lemma \ref{pe_lo} holds with  $A^b$ replaced by $A^b_+$.

\section{The Numerical Range and Symmetry Arguments} \label{sec:nrsymm}

  Let us first introduce some properties of and notation related to adjoint operators. Given a banded bi-infinite matrix  $A=(a_{ij})_{i,j\in\Z}$, with $\sup_{ij} |a_{ij}|<\infty$, $A^*$ will denote the matrix $A^*=(\bar a_{ji})_{i,j\in \Z}$. For $1\leq p<\infty$, where $q\in (1,\infty]$ satisfies $p^{-1}+q^{-1}=1$, and identifying $\ell^q(\Z)$ with $(\ell^p(\Z))^*$, the dual space of $\ell^p(\Z)$ (in the sense e.g.\ of Kato \cite{Kato_book}, where the elements of the dual space are anti-linear functionals), it holds that  $A^*:\ell^q(\Z)\to\ell^q(\Z)$ is the adjoint of $A:\ell^p(\Z)\to\ell^p(Z)$. Further \cite{Kato_book} $A$ is invertible iff $A^*$ is invertible and, if they are both invertible, then $\|A^{-1}\|_p=\|(A^*)^{-1}\|_q$. Similarly, $A$ is Fredholm iff $A^*$ is Fredholm and, if they are both Fredholm then $\ind A = - \ind A^*$.


In this section we first compute the numerical range of the operator $A^b$ in the case when $b$ is pseudo-ergodic, which gives an upper bound on the spectrum $\Sigma$ of $A^b$. We then apply a variety of symmetry arguments to explore the relationship between spectral sets for matrices with one and two $\pm 1$ diagonals and between semi-infinite and bi-infinite matrices, and to explore the geometry of $\Sigma$ and that of $\Sigma_\eps^p$, the $\eps$-pseudospectrum of $A^b$ on $\ell^p(\Z)$ when $b$ is pseudo-ergodic. Our final result shows that, roughly speaking, in the pseudo-ergodic case, the spectral sets are the same whether the matrix is semi-infinite or bi-infinite, and whether the matrix has one or two $\pm 1$ diagonals.

These results are to some extent surprising: there is no expectation in general that the spectral sets associated with bi-infinite and corresponding semi-infinite matrices will be the same. A simple example is provided by the shift operator $V_{-1}$. This is a Laurent operator (a bi-infinite Toeplitz matrix) whose spectrum is the unit circle and whose $\ell^2$ $\eps$-pseudospectrum is the $\eps$-neighbourhood of the unit circle. On the other hand the Toeplitz operator that is the shift operator restricted to $\ell^2(\N)$ (a semi-infinite Toeplitz matrix) has spectrum that is the closed unit disc (e.g.\ \cite{Davies2007:Book}). An example closer to our case is studied in \cite{TrefContEmb}, where calculations are made of the spectra of random bi-diagonal bi-infinite and semi-infinite matrices, matrices which the authors term stochastic Laurent and Toeplitz operators, respectively, by which they mean a bi-infinite or semi-infinite matrix where each diagonal is either constant or has random entries, but with the random distribution constant along the diagonal. In the bi-diagonal case they study, which has the constant value 1 along the first superdiagonal and a random main diagonal, it is found \cite{TrefContEmb} that the bi-infinite and semi-infinite matrices may or may not have the same spectra, this depending on the support of the probability density function for the random variables on the main diagonal.

Our first result is a computation of the numerical range. By $W(B)$ we denote the (2-norm) {\em numerical range} of the operator or matrix $B$, defined by (see Section \ref{sec:pseudo})
$
W(B) := \{(Bx,x):\|x\|_2=1\}$,
where $(\cdot,\cdot)$ denotes the standard $\ell^2$ inner-product on $\C^n$ or on $\ell^2(S)$, with $S=\Z$ or $\N$, as appropriate.

\begin{lemma} \label{lem_nr} For $b\in\{\pm1\}^\Z$, $W(A^b_n) \subset W(A^b_+) \subset W(A^b)\subset \Delta:=\{x+\ri y : x,y\in\R,\, |x|+|y|<2 \}$, and $W(A^b)= \Delta$ if $b$ is pseudo-ergodic. Similarly, $W(A^b_+)= \Delta$ if $b\in\{\pm1\}^\N$ is pseudo-ergodic, and  $\Sigma \subset \overline{\Delta}$.
\end{lemma}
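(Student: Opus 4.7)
The plan is to prove the inclusions, the upper bound by $\Delta$, the two pseudo-ergodic equalities, and the spectral inclusion in turn.

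The chain $W(A_n^b)\subset W(A_+^b)\subset W(A^b)$ follows from (\ref{eq:numr}) applied twice: identifying $\C^n$ with $Y_n:=\mathrm{span}\{e_1,\ldots,e_n\}\subset\ell^2(\N)$ and $\ell^2(\N)$ with the subspace of $\ell^2(\Z)$ of sequences supported in $\N$, a direct check on entries gives $A_n^b=P_nA_+^b|_{Y_n}$ and $A_+^b=P_\N A^b|_{\ell^2(\N)}$, where $P_n,P_\N$ are the corresponding orthogonal projections.

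For $W(A^b)\subset\Delta$, I would compute $(A^bx,x)$ directly for $x\in\ell^2(\Z)$ with $\|x\|_2=1$. Writing $u_k:=\overline{x_k}x_{k+1}$ and reindexing both sums coming from $(A^bx)_i=b_{i-1}x_{i-1}+x_{i+1}$ gives
$$
(A^bx,x)\;=\;\sum_{k\in\Z}\bigl(b_k\overline{u_k}+u_k\bigr)\;=\;2\!\!\sum_{k:\,b_k=1}\!\!\mathrm{Re}(u_k)\;+\;2\ri\!\!\sum_{k:\,b_k=-1}\!\!\mathrm{Im}(u_k),
$$
the two sums being over disjoint index sets. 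Hence, by the Cauchy--Schwarz inequality,
$$
|\mathrm{Re}(A^bx,x)|+|\mathrm{Im}(A^bx,x)|\;\le\;2\sum_{k\in\Z}|x_k||x_{k+1}|\;\le\;2\|x\|_2\|V_{-1}x\|_2\;=\;2,
$$
with strict inequality, since Cauchy--Schwarz equality would force $|x_{k+1}|=c|x_k|$ for all $k$ and some constant $c$, incompatible with $x\in\ell^2(\Z)$ of unit norm. Thus $(A^bx,x)\in\Delta$.

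For $\Delta\subset W(A^b)$ in the pseudo-ergodic case, the key reduction is that $W(A_n^c)\subset W(A^b)$ for every $n$ and every $c\in\{\pm1\}^{n-1}$: pseudo-ergodicity supplies $J\in\Z$ with $b_{J+m}=c_m$ for $m=1,\ldots,n-1$, and embedding a unit $x\in\C^n$ into $\ell^2(\Z)$ at positions $J+1,\ldots,J+n$ produces a unit $\tilde x$ with $(A^b\tilde x,\tilde x)=(A_n^cx,x)$. It then suffices to exhibit $\Delta\subset\bigcup_{n,c}W(A_n^c)$. Taking $x_k=\re^{\ri\phi_k}/\sqrt n$ reduces the finite version of the formula above to
$$
\mathrm{Re}(A_n^cx,x)=\frac{2}{n}\!\!\sum_{k:\,c_k=1}\!\!\cos(\phi_{k+1}-\phi_k),\quad\mathrm{Im}(A_n^cx,x)=\frac{2}{n}\!\!\sum_{k:\,c_k=-1}\!\!\sin(\phi_{k+1}-\phi_k),
$$
with the $n-1$ phase differences freely choosable; so every point of $[-2n_+/n,2n_+/n]\times[-2n_-/n,2n_-/n]$ lies in $W(A_n^c)$, where $n_\pm:=|\{k:c_k=\pm1\}|$. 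Given $\alpha+\ri\beta\in\Delta$, choosing $n$ large and $n_+=\lceil n|\alpha|/2\rceil$, $n_-=n-1-n_+$ places $\alpha+\ri\beta$ in this rectangle.

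The semi-infinite statement $W(A_+^b)=\Delta$ for pseudo-ergodic $b\in\{\pm1\}^\N$ proceeds identically, with $J\ge 0$ supplied by pseudo-ergodicity on $\N$ and $\tilde x\in\ell^2(\N)$; the matching upper bound $W(A_+^b)\subset\Delta$ follows either by the analogous Cauchy--Schwarz calculation on $\ell^2(\N)$ or by extending $b$ to $\tilde b\in\{\pm1\}^\Z$ and invoking the bi-infinite bound. Finally, $\Sigma\subset\overline{\Delta}$ follows from the standard inclusion $\spec B\subset\overline{W(B)}$ applied to $A^b$ for any pseudo-ergodic bi-infinite $b$, since $\spec A^b=\Sigma$ by Theorem \ref{thm_lo2}. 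The main delicacy is the filling argument: the strict inequality $|\alpha|+|\beta|<2$ is precisely what makes the constraints $|\alpha|\le 2n_+/n$, $|\beta|\le 2n_-/n$, $n_++n_-=n-1$ simultaneously solvable for sufficiently large $n$.
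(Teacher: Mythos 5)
Your proof is correct, and the upper-bound half (the chain of inclusions via (\ref{eq:numr}) and the Cauchy--Schwarz estimate giving $W(A^b)\subset\Delta$, with strictness from the impossibility of $|x_{k+1}|=c|x_k|$ for a unit $\ell^2$ vector) is essentially identical to the paper's, which writes the same quantity as $\sum_k[\alpha_k(1+b_k)+\ri\beta_k(1-b_k)]$ with $a=\bar xV_{-1}x$. Where you genuinely diverge is the lower bound $\Delta\subset W(A^b)$. The paper argues indirectly: it invokes $\pi_1=[-2,2]\cup\ri[-2,2]\subset\Sigma\subset\overline{W(A^b)}$ (which rests on the limit-operator machinery of Theorem \ref{thm_lo2} and Lemma \ref{periods123}) to place the four corners $\pm2,\pm2\ri$ in $\overline{W(A^b)}$, and then uses convexity of the numerical range (Toeplitz--Hausdorff) to fill in the open square. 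You instead construct test vectors directly: pseudo-ergodicity embeds every $W(A^c_n)$ into $W(A^b)$, and unimodular-phase vectors $x_k=\re^{\ri\phi_k}/\sqrt n$ decouple the real and imaginary parts into sums over the disjoint index sets $\{c_k=1\}$ and $\{c_k=-1\}$, yielding a full rectangle $[-2n_+/n,2n_+/n]\times[-2n_-/n,2n_-/n]$; the strict inequality $|\alpha|+|\beta|<2$ is exactly what makes the counting constraint $n_++n_-=n-1$ solvable. Your route is more self-contained (no appeal to $\pi_1\subset\Sigma$, to convexity of $W$, or to spectral theory at all for this step) and, notably, it handles the semi-infinite equality $W(A^b_+)=\Delta$ without the forward reference to Theorem \ref{thm_main} that the paper's proof uses there; the paper's route is shorter given the surrounding machinery. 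Both are valid, and your final steps ($\Sigma\subset\overline{W(A^b)}\subset\overline{\Delta}$) coincide with the paper's.
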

\begin{proof} For $b\in \{\pm 1\}^\Z$ and $x\in \ell^2(\Z)$ with $\|x\|_2=1$, defining $a = \bar xV_{-1} x$ we see that
$$
(A^bx,x) = \sum_{k\in\Z}(b_{k-1} x_{k-1} + x_{k+1})\bar x_k = \sum_{k\in\Z}(b_{k} \bar a_k +  a_k) = \sum_{k\in\Z}[\alpha_k(1+b_{k})+\ri\beta_k(1-b_{k})],
$$
where $\alpha_k = \Re(a_k)$ and $\beta_k = \Im(a_k)$. Thus
$$
|\Re(A^bx,x)| + |\Im(A^bx,x)| \leq \sum_{k\in\Z}\left\{|\alpha_k|(1+b_{k})+ |\beta_k|(1-b_{k})\right\} \leq 2\sum_{k\in \Z} |a_k| = 2 \|a\|_1.
$$
Now, since $x\in \ell^2(\Z)$, $\bar x$ and $V_{-1}x$ must be linearly independent. Hence, by the Cauchy-Schwarz inequality, $\|a\|_1 = \|\bar x V_{-1} x\|_1 < \|x\|_2\|V_{-1}x\|_2 = 1$. We have shown that $W(A^b)\subset \Delta$; it follows that $W(A^b_n)\subset W(A^b_+)\subset W(A^b)$ from (\ref{eq:numr}). From this it follows, from standard properties of the numerical range (see the end of Section \ref{sec:pseudo}) that $\Sigma \subset \overline{W(A^b)} \subset \overline{\Delta}$. But, since $\pi_1 = [-2,2]\cup \ri[-2,2] \subset \Sigma$, this implies that $\pm 2$ and $\pm 2\ri$ are in $\overline{W(A^b)}$, if $b$ is pseudo-ergodic. Hence, if $b$ is pseudo-ergodic, then, for every $\eta>0$ there exist points $r,s,t,u\in W(A^b)$ with $|2-r|<\eta$, $|-2-s|<\eta$, $|2\ri-t|<\eta$ and $|-2\ri-u|<\eta$. Since $W(A^b)$ is convex, this implies that $\Delta\subset W(A^b)$, and so $W(A^b)=\Delta$. A similar argument, using that $\Sigma= \spec A^b_+ \subset \overline{W(A^b_+)}$ if $b\in \{\pm1\}^\N$ is pseudo-ergodic (that $\Sigma = \spec A^b_+$ is established in Theorem \ref{thm_main} below), shows that $W(A^b_+)=\Delta$ if $b\in \{\pm1\}^\N$ is pseudo-ergodic.
\end{proof}

Our next result elucidates the relationship between the spectral properties of matrices with one and two $\pm 1$ diagonals. One obvious symmetry result we use already in this lemma is that, since the coefficients $b,c\in \{\pm 1\}^\Z$ are real-valued, the spectrum and pseudospectrum of $A^{b,c}$ are symmetric about the real axis.

\begin{lemma} \label{lem_sim}
For $a, b,c\in \{\pm1\}^\Z$,
$$
M_a A^{b,c} M_a^{-1}= A^{bd,cd},
$$
where $d = aV_{-1} a$, so that
$$
\spec A^{b,c} = \spec A^{bd,cd} = \spec A^{bc}, \quad \spess A^{b,c} = \spess A^{bd,cd} = \spess A^{bc}.
$$
Further, for $\lambda \not\in \spess A^{b,c}$,
$\ind (A^{b,c}-\lambda I) = 0
$,
and, for $\lambda \not\in \spec A^{b,c}$ and $p\in [1,\infty]$, where $q\in [1,\infty]$ is given by $p^{-1}+q^{-1}=1$,
$$
\|(A^{b,c}-\lambda
I)^{-1}\|_p=\|(A^{bd,cd}-\lambda I)^{-1}\|_p=\|(A^{bc}-\lambda I)^{-1}\|_p=\|(A^{b,c}-\lambda
I)^{-1}\|_q,
$$
so that, for $\eps >0$,
$$
\specn_\eps^p A^{b,c} = \specn_\eps^p A^{bd,cd} = \specn_\eps^p A^{bc} = \specn_\eps^q A^{b,c}.
$$
Moreover, for $1\leq p \leq r\leq 2$ and $\eps>0$,
$$
\specn_\eps^r A^{b,c} \subset \specn_\eps^p A^{b,c}.
$$
\end{lemma}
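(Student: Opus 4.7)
The plan is to start with a direct operator computation. Using the representation $A^{b,c}=V_1M_b+M_cV_{-1}$ from (\ref{eq:Abrep}), together with $M_a^{-1}=M_a$ (since $a_j^2=1$) and the intertwining rule $V_jM_x=M_{V_jx}V_j$, a short calculation gives
$$M_aA^{b,c}M_a=V_1M_{b\,a(V_{-1}a)}+M_{c\,a(V_{-1}a)}V_{-1}=V_1M_{bd}+M_{cd}V_{-1}=A^{bd,cd},$$
verifying the stated operator identity. To reach $A^{bc}=A^{bc,1}$ (with a second choice of $a$), I would pick $a\in\{\pm1\}^\Z$ so that $d=c$, i.e.\ $a_ja_{j-1}=c_j$ for all $j\in\Z$; this recursion is solvable by setting $a_0=1$ and extending in both directions. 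Since $|a_j|=1$, the operator $M_a$ is a self-inverse isometry of $\ell^p(\Z)$ for every $p\in[1,\infty]$, so $A^{b,c}\sim A^{bd,cd}\sim A^{bc}$ via isometries. Standard invariance of spectrum and essential spectrum under similarity, together with $\|M_aTM_a\|_p=\|T\|_p$ applied to $T=(A^{b,c}-\lambda I)^{-1}$, then immediately yield all the claimed equalities of $\spec$, $\spess$, and fixed-$p$ resolvent norms (hence fixed-$p$ pseudospectra).

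For the crossover identity $\|(A^{b,c}-\lambda I)^{-1}\|_p=\|(A^{b,c}-\lambda I)^{-1}\|_q$ I combine two further symmetries. Direct inspection of entries gives $(A^{b,c})^*=A^{c,b}$, and the standard Banach duality $\|T^{-1}\|_p=\|(T^*)^{-1}\|_q$ yields
$$\|(A^{b,c}-\lambda I)^{-1}\|_p=\|(A^{c,b}-\bar\lambda I)^{-1}\|_q=\|(A^{b,c}-\bar\lambda I)^{-1}\|_q,$$
the second step invoking the part-one similarity applied to the pair $(c,b)$ (so $A^{c,b}\sim A^{cb}=A^{bc}\sim A^{b,c}$). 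Componentwise complex conjugation $J:x\mapsto\bar x$ is an antilinear isometry on each $\ell^p$ satisfying $J(A^{b,c}-\lambda I)=(A^{b,c}-\bar\lambda I)J$ because $A^{b,c}$ has real entries, and therefore $\|(A^{b,c}-\bar\lambda I)^{-1}\|_q=\|(A^{b,c}-\lambda I)^{-1}\|_q$, closing the chain.

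For the index claim I would work on $\ell^2$, where the index is well-defined and $p$-independent, as noted after Theorem \ref{thm_lo}. Setting $e:=bc$, the antilinear conjugation $J$ is a Fredholm-preserving bijection between kernels and between cokernels (preserving complex dimensions, since $\dim_\C JV=\dim_\C V$ for antilinear bijections), giving $\ind(A^e-\lambda I)=\ind(A^e-\bar\lambda I)$. On the other hand the part-one similarity applied to $(1,e)$ gives $(A^e)^*=A^{1,e}\sim A^{1\cdot e}=A^e$, so $\ind(A^e-\bar\lambda I)=\ind((A^e)^*-\bar\lambda I)=-\ind(A^e-\lambda I)$. Combining forces $\ind(A^e-\lambda I)=0$, and $M_a$-similarity transfers this to $\ind(A^{b,c}-\lambda I)=0$.

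The final inclusion $\specn_\eps^r A^{b,c}\subset\specn_\eps^p A^{b,c}$ for $1\le p\le r\le 2$ is Riesz--Thorin interpolation. For $\lambda$ in the (common, $p$-independent) resolvent set of $A^{b,c}$ the band-operator resolvent is bounded on both $\ell^p$ and $\ell^q$ with equal norms by the preceding paragraph, and the two inverses agree as matrices on finitely supported sequences; Riesz--Thorin therefore gives $\|(A^{b,c}-\lambda I)^{-1}\|_r\le\|(A^{b,c}-\lambda I)^{-1}\|_p$ for every $r\in[p,q]$, and since $p\le r\le 2\le q$ this covers the claimed range. The inclusion of $\eps$-pseudospectra is then immediate from the definition. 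The main obstacle I anticipate is the clean orchestration of the three symmetries---the $M_a$-similarity, the Banach adjoint, and the antilinear conjugation $J$---so that both the $p\leftrightarrow q$ resolvent identity and the vanishing of the index drop out without sign errors; once those are in place the interpolation step is essentially automatic.
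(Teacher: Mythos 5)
Your proof is correct and follows essentially the same route as the paper: the similarity $M_aA^{b,c}M_a^{-1}=A^{bd,cd}$ with the two choices $d=c$ and $d=bc$ (the latter giving $A^{b,c}\sim A^{c,b}=(A^{b,c})^*$), Banach adjoint duality combined with the real-entry conjugation symmetry for the $p\leftrightarrow q$ resolvent identity and the vanishing of the index, and Riesz--Thorin for the final inclusion. You merely make explicit (the antilinear conjugation $J$ handling $\lambda\leftrightarrow\bar\lambda$, and the index cancellation $\ind=-\ind$) what the paper compresses into ``standard properties of the adjoint and of Fredholm operators''.
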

\begin{proof} For $a,b,c\in \{\pm 1\}^\Z$, recalling (\ref{eq:Abrep}) and noting that $M_a^{-1}=M_a$,
$$
M_a A^{b,c} M_a^{-1}= M_{a}V_1M_{ab} + M_{ac} V_{-1} M_{a} = V_1 M_{V_{-1}a} M_{ab} + M_{ac}M_{V_{-1} a}V_{-1} =  V_1 M_{bd}+ M_{cd}V_{-1} =A^{bd,cd}.
$$
In particular, choosing $a$ so that $d=c$, this identity reduces to $M_a A^{b,c} M_a^{-1}= A^{bc}$, while, choosing $a$ so that $d=bc$, this identity reduces to $M_a A^{b,c} M_a^{-1}= A^{c,b}=(A^{b,c})^*$.
The remaining results, except the last equation, follow since $M_a$ is an isometric isomorphism, and using the properties of the adjoint listed immediately at the beginning of the section, and standard properties of Fredholm operators, e.g.\ \cite{Kato_book,Jorgens_book}. The last inclusion follows from the interpolation theorem of Riesz-Thorin, often called the Riesz convexity theorem \cite[Chapter V,
Theorem 1.3]{SteinWeissBook}, which implies that, for $\lambda\not \in \spec A^{b,c}$,
$$
\|(A^{b,c}-\lambda I)^{-1}\|_r \leq \max(\|(A^{b,c}-\lambda I)^{-1}\|_p,\|(A^{b,c}-\lambda I)^{-1}\|_q) = \|(A^{b,c}-\lambda I)^{-1}\|_p.
$$
\end{proof}

Note that this lemma implies that, for $1\leq p \leq 2\leq q\leq \infty$, where $p^{-1}+q^{-1}=1$,
$$
\specn_\eps^2 A^{b,c} \subset \specn_\eps^p A^{b,c} = \specn_\eps^q A^{b,c} \subset \specn_\eps^1 A^{b,c} = \specn_\eps^\infty A^{b,c}.
$$
In general, for a non-self-adjoint operator or matrix $A$, it need not hold that $\specn_\eps^r A\subset \specn_\eps^p A$ for any distinct $p,r\in [1,\infty]$.

Exactly the same results hold in the semi-infinite case. Precisely, where $M^+_a$ denotes the operator on $\ell^p(\N)$ of multiplication by $a\in \{\pm 1\}^\N$, Lemma \ref{lem_sim} holds also with $\Z$ replaced by $\N$, $M_a$ replaced by $M_a^+$, and all other operators replaced by their semi-infinite counterparts. Similarly, where $D^a_n$ is the diagonal matrix with the vector $a = (a_1,...,a_n)$ on the diagonal, the following finite dimensional version of the above lemma holds.

\begin{lemma} \label{lem_simfd}
For $n\in \N$, $a\in \{\pm 1\}^n$, and $b,c\in \{\pm 1\}^{n-1}$,
$$
D^a_n A_n^{b,c} D^a_n= A_n^{bd,cd},
$$
where $d = (a_1a_2,...,a_{n-1}a_n)$, so that
$$
\spec A_n^{b,c} = \spec A_n^{bd,cd} = \spec A_n^{bc}.
$$
Further, for $p\in [1,\infty]$ and $\eps >0$,  where $q\in [1,\infty]$ is given by $p^{-1}+q^{-1}=1$,
$$
\specn_\eps^p A_n^{b,c} = \specn_\eps^p A_n^{bd,cd} = \specn_\eps^p A_n^{bc} = \specn_\eps^q A_n^{b,c}.
$$
Moreover, for $1\leq p \leq r\leq 2$ and $\eps>0$,
$
\specn_\eps^r A_n^{b,c} \subset \specn_\eps^p A_n^{b,c}$.
\end{lemma}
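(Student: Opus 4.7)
The plan is to adapt the proof of Lemma \ref{lem_sim}, exploiting that in the finite-dimensional setting we no longer need essential spectra or Fredholm considerations, but gain the convenience that $D^a_n$ is its own inverse and is an isometry in every $p$-norm.

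First I would verify the matrix identity $D^a_n A_n^{b,c} D^a_n = A_n^{bd,cd}$ by a direct entrywise calculation. Since each $a_i=\pm 1$, $(D^a_n)^2 = I_n$, so $D^a_n = (D^a_n)^{-1}$. The nonzero entries of $A_n^{b,c}$ are $b_i$ at position $(i+1,i)$ and $c_i$ at position $(i,i+1)$; conjugation by $D^a_n$ multiplies these by $a_{i+1}a_i$ and $a_ia_{i+1}$ respectively, and both of these equal $d_i$. To derive $\spec A_n^{b,c} = \spec A_n^{bc}$, I would then choose $a\in\{\pm1\}^n$ with $d=c$, which is achieved by setting $a_1=1$ and $a_{i+1}=c_ia_i$ recursively; then $bd=bc$ and $cd=(1,\dots,1)$, so $A_n^{bd,cd}=A_n^{bc}$. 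Similarity thus gives the spectral equalities.

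Next, for the $p$-norm pseudospectra, the key observation is that $D^a_n$ is an isometry on $(\C^n,\|\cdot\|_p)$ for every $p\in[1,\infty]$, because a sign diagonal preserves $|x_i|$ for each coordinate. Consequently, for $\lambda\notin \spec A_n^{b,c}$,
$$
\|(A_n^{b,c}-\lambda I)^{-1}\|_p = \|D^a_n(A_n^{b,c}-\lambda I)^{-1}D^a_n\|_p = \|(A_n^{bd,cd}-\lambda I)^{-1}\|_p,
$$
yielding $\specn^p_\eps A_n^{b,c}=\specn^p_\eps A_n^{bd,cd}=\specn^p_\eps A_n^{bc}$. To get the $p$-versus-$q$ equality I would invoke the transpose trick of Lemma \ref{lem_sim}: choosing $a$ so that $d=bc$ (via $a_1=1$, $a_{i+1}=b_ic_ia_i$) gives $A_n^{bd,cd}=A_n^{c,b}=(A_n^{b,c})^T$. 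Since $A_n^{b,c}$ has real entries, the transpose coincides with the Banach-space adjoint for the $\ell^p$--$\ell^q$ duality, so $\|B\|_p=\|B^T\|_q$ for every square matrix $B$, whence
$$
\|(A_n^{b,c}-\lambda I)^{-1}\|_p = \|(A_n^{c,b}-\lambda I)^{-1}\|_q = \|(A_n^{b,c}-\lambda I)^{-1}\|_q,
$$
the last step by the similarity already established.

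Finally, for $1\le p\le r\le 2$ with $p^{-1}+q^{-1}=1$, the Riesz--Thorin convexity theorem gives
$$
\|(A_n^{b,c}-\lambda I)^{-1}\|_r \le \max\bigl(\|(A_n^{b,c}-\lambda I)^{-1}\|_p,\,\|(A_n^{b,c}-\lambda I)^{-1}\|_q\bigr) = \|(A_n^{b,c}-\lambda I)^{-1}\|_p
$$
for every $\lambda\notin \spec A_n^{b,c}$, the last equality from the previous step; hence $\specn^r_\eps A_n^{b,c}\subset \specn^p_\eps A_n^{b,c}$. There is no genuine obstacle: the whole argument is a direct translation of Lemma \ref{lem_sim}, with the only substantive check being the recursive solvability of $a_{i+1}=c_ia_i$ (resp.\ $a_{i+1}=b_ic_ia_i$), which succeeds trivially since each $c_i$ (resp.\ $b_ic_i$) is $\pm 1$.
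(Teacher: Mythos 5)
Your proof is correct and is essentially the paper's argument: the paper states Lemma \ref{lem_simfd} without a separate proof, as the direct finite-dimensional transcription of the proof of Lemma \ref{lem_sim}, and your entrywise conjugation identity, the recursive choices of $a$ giving $d=c$ and $d=bc$, the isometry of the sign-diagonal $D^a_n$ in every $p$-norm, the transpose/duality identity $\|B\|_p=\|B^T\|_q$, and the Riesz--Thorin step all match that template exactly.
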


A first application of the above lemmas is the following symmetry result (cf.\ \cite{HolzOrlZee}).
\begin{lemma} \label{lem_symm} For $b\in \{\pm 1\}^\Z$, $\eps>0$, and $p\in [1,\infty]$, $\spec A^{b}$, $\spess A^{b}$, $\specn^p_\eps A^{b}$, $\spec A_n^{b}$, and $\specn_\eps^p A_n^{b}$  are invariant under reflection in the real and imaginary axes. Further, where $S(b)$ denotes any one of these sets, it holds that $S(-b) = \ri S(b)$. The set  $\Sigma$, which is the set $\spec A^b = \spess A^b$ in the case that $b$ is pseudo-ergodic, and, for $\eps>0$ and $p\in[1,\infty]$, the set $\Sigma_\eps^p$, which is the set $\specn^p_\eps A^{b}$ for $b$ pseudo-ergodic, are invariant under reflection in either axis and under rotation by $90^0$.
\end{lemma}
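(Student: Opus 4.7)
The plan is to realise each of the four symmetries (reflection in the real axis, reflection in the imaginary axis, $S(-b)=\ri S(b)$, and the $90^\circ$ rotational symmetry for the pseudo-ergodic sets) as an isometric similarity of $A^b$ together with, where useful, a complex-conjugation argument, and then to quote exactly the same construction on $\C^n$ for $A_n^b$.

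First I would dispose of reflection in the real axis. Since $b\in\{\pm1\}^\Z$ is real-valued, the matrix $A^b$ has real entries, so complex conjugation of a sequence $x$ yields $\overline{(A^b-\lambda I)x}=(A^b-\bar\lambda I)\bar x$. Conjugation preserves the $\ell^p$-norm on $\ell^p(\Z)$ and on $\C^n$, so it sends $\ker(A^b-\lambda I)$ isometrically onto $\ker(A^b-\bar\lambda I)$, maps the range of $A^b-\lambda I$ onto that of $A^b-\bar\lambda I$, and sends resolvents to resolvents with the same $p$-norm. This gives $\spec A^b=\overline{\spec A^b}$, $\spess A^b=\overline{\spess A^b}$, $\specn_\eps^p A^b=\overline{\specn_\eps^p A^b}$, and, in the same way, $\spec A_n^b=\overline{\spec A_n^b}$ and $\specn_\eps^p A_n^b=\overline{\specn_\eps^p A_n^b}$.

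Next I would produce origin-symmetry by an explicit similarity. Define the diagonal operator $J$ on $\ell^p(\Z)$ by $(Jx)_k=(-1)^k x_k$; since $|(-1)^k|=1$, $J$ is an isometric isomorphism on every $\ell^p(\Z)$, $p\in[1,\infty]$, with $J^{-1}=J$. Using $V_jM_b=M_{V_jb}V_j$ and a two-line index computation, one checks $JV_{\pm 1}J^{-1}=-V_{\pm 1}$ and $JM_bJ^{-1}=M_b$, so by (\ref{eq:Abrep})
\[
JA^bJ^{-1}=-V_1M_b-V_{-1}=-A^b.
\]
Since $J$ is an isometric isomorphism, $\spec A^b$, $\spess A^b$, and $\specn_\eps^p A^b$ all coincide with those of $-A^b$, which equal their own negatives. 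The finite-dimensional analogue uses $J_n=\mathrm{diag}(1,-1,\dots,(-1)^{n-1})$, verbatim. Combined with real-axis symmetry, origin-symmetry yields reflection in the imaginary axis.

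For the relation $S(-b)=\ri S(b)$ I would use the isometric similarity given by $K$ on $\ell^p(\Z)$ with $(Kx)_k=\ri^k x_k$ (again an isometric isomorphism since $|\ri^k|=1$). The analogous two-line computation gives $KV_1K^{-1}=\ri V_1$, $KV_{-1}K^{-1}=-\ri V_{-1}$, and $KM_bK^{-1}=M_b$, hence
\[
KA^bK^{-1}=\ri V_1M_b-\ri V_{-1}=-\ri(-V_1M_b+V_{-1})=-\ri A^{-b}.
\]
Spectra, essential spectra, and pseudospectra are preserved by the similarity, and scaling the operator by $-\ri$ rotates each of these sets by $-\ri$, so $S(A^b)=-\ri S(A^{-b})$, i.e.\ $S(A^{-b})=\ri S(A^b)$ after multiplying by $\ri$ and using origin-symmetry from the previous step. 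The finite case uses $K_n=\mathrm{diag}(1,\ri,\ri^2,\dots,\ri^{n-1})$ in exactly the same way.

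Finally, for $\Sigma$ and $\Sigma_\eps^p$ I would observe that the map $b\mapsto -b$ is a bijection of $\{\pm1\}^\Z$ that carries pseudo-ergodic sequences to pseudo-ergodic sequences (it only relabels the finite patterns). Thus by Theorem \ref{thm_lo2}, if $b$ is pseudo-ergodic then $\Sigma=\spec A^b=\spec A^{-b}=\ri\Sigma$, and likewise $\Sigma_\eps^p=\specn_\eps^p A^b=\specn_\eps^p A^{-b}=\ri\Sigma_\eps^p$. Combined with reflection in the real axis (and therefore also in the imaginary axis) from the first two steps, this $90^\circ$ rotational symmetry yields all the claimed invariances. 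There is no real obstacle here; the only point that requires any care is getting the signs right in the computations $JA^bJ^{-1}=-A^b$ and $KA^bK^{-1}=-\ri A^{-b}$, and verifying that $J$ and $K$ are isometries on $\ell^p(\Z)$ for the full range $p\in[1,\infty]$ (which is immediate because their diagonals have modulus one).
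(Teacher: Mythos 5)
Your proposal is correct and follows essentially the same route as the paper: real entries give the real-axis symmetry, the diagonal similarity with $(-1)^k$ gives $M_aA^bM_a^{-1}=-A^b$, the diagonal similarity with $\ri^k$ gives $M_aA^bM_a^{-1}=-\ri A^{-b}$, and the fact that $b$ is pseudo-ergodic iff $-b$ is, together with Theorem \ref{thm_lo2}, gives the $90^0$ rotational invariance of $\Sigma$ and $\Sigma_\eps^p$ (the paper packages the similarity computations via Lemma \ref{lem_sim}, but the calculation is identical). The only cosmetic remark is that your appeal to origin-symmetry in deducing $S(-b)=\ri S(b)$ is unnecessary, since $S(A^b)=-\ri S(A^{-b})$ already rearranges to $S(A^{-b})=\ri S(A^b)$.
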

\begin{proof} We prove the results for $A^{b}$ using Lemma \ref{lem_sim}; the proof for $A_n^{b}$ using Lemma \ref{lem_simfd} is similar. That the entries of the matrix $A^b$ are real implies the symmetry about the real axis. Defining $a\in \{\pm1\}^\Z$ by $a_k = (-1)^k$, $k\in\Z$, so that $d = a V_{-1} a$ is the constant sequence $d = (...,-1,-1,...)$, it follows from Lemma \ref{lem_sim} that $M_a A^b M_a^{-1} = -A^b$, which implies that the sets  $\spec A^{b}$, $\spess A^{b}$, and $\specn^p_\eps A^{b}$ are also invariant under reflection in the origin, so that they are also invariant under reflection in the imaginary axis. Defining, instead, $a\in \ell^\infty(\Z)$ by $a_k = \ri^k$, we obtain, similarly, that $M_a A^b M_a^{-1} = A^{db,\bar d}$, where $d = \bar a V_{-1} a$ so that $d_k = \ri$. Thus $M_a A^b M_a^{-1} = -\ri A^{-b}$, and we see that $S(-b)=\ri S(b)$, where $S(b)$ denotes one of $\spec A^{b}$, $\spess A^{b}$, or $\specn^p_\eps A^{b}$. Where $S(b)$ again denotes one of these sets, since $b$ is pseudo-ergodic iff $-b$ is pseudo-ergodic, that $S(b)=S(-b)=\ri S(b)$ follows from Theorem \ref{thm_lo2}.
\end{proof}

The following lemma further elucidates the relationship between the spectral properties of semi-infinite and bi-infinite matrices. In this lemma for $p\in[1,\infty]$ we let $\ell_o^p(\Z)$ denote the closed subspace of odd elements of $\ell^p(\Z)$, i.e.\ $x\in \ell^p_o(\Z)$ iff $x_{-k}=-x_k$, $k\in\Z$, and let $\ell_e^p(\Z)$ denote the closed subspace of even elements of $\ell^p(\Z)$, i.e.\ $x\in \ell^p_e(\Z)$ iff $x_{-k}=x_k$, $k\in\Z$, so that $\ell^p(\Z)=\ell^p_o(\Z)\oplus \ell^p_e(\Z)$. It is convenient to equip $\ell^p_o(\Z)$ with the norm $\|x\| := 2^{-1/p} \|x\|_p$, so that the extension operator $E:\ell^p(\N)\to \ell^p_o(\Z)$ given by $(Ex)_k = x_k$, $k\in\N$, $(Ex)_0=0$, and $(Ex)_{-k} = -x_k$, $k\in\N$, is an isometric isomorphism, as is the restriction operator $P:\ell^p_o(\Z)\to \ell^p(\N)$ given by $(Px)_k = x_k$, $k\in\N$. (This change of norm does not effect the value of the induced norm of a bounded linear operator $A$ on $\ell^2_o(\Z)$, and so does not affect the definition of $\specn_\eps^p A$.) Further, let $R:\ell^p(\Z)\to \ell^p(\Z)$ be the reflection operator given by $(Rx)_k = x_{-1-k}$, $k\in\Z$.

\begin{lemma} \label{lem_ref}
Suppose $b\in \{\pm 1\}^\Z$ with $b_k = 1$, $k\leq 0$, and let $c = Rb$. Then, for $p\in [1,\infty]$, $A^{b,c}$ maps $\ell^p_o(\Z)$ to $\ell^p_o(\Z)$ and maps $\ell^p_e(\Z)$ to $\ell^p_e(\Z)$. Further, where $A^{b,c}_o$ denotes the restriction of $A^{b,c}$ to $\ell^p_o(\Z)$,
\begin{equation} \label{eq_semi}
A_o^{b,c} = E A_+^b P.
\end{equation}
Thus $\spec A_+^b = \spec A_o^{b,c} \subset \spec A^{b,c} = \spec A^{bc}$ and $\specn_\eps^p A_+ = \specn_\eps^p A_o^{b,c}\subset \specn_\eps^p A^{b,c}=\specn_\eps^p A^{bc}$, for $\eps>0$ and $p\in [1,\infty]$.
\end{lemma}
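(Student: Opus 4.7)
The plan is to break the lemma into four mechanical pieces: invariance of the odd and even subspaces, verification of the identity $A_o^{b,c}=EA_+^bP$, transfer of spectral data through the isometry $E$, and reduction to a statement about $A^{b,c}$ via Lemma \ref{lem_sub} and Lemma \ref{lem_sim}.

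First I would verify the invariance claim by direct computation on matrix entries. Writing the action as $(A^{b,c}x)_i=b_{i-1}x_{i-1}+c_i x_{i+1}$ and demanding that $(A^{b,c}x)_{-i}=\mp(A^{b,c}x)_i$ for odd/even $x$ leads, after substituting $x_{-k}=\mp x_k$, to the conditions $c_i=b_{-1-i}$ and $b_{i-1}=c_{-i}$ for all $i\in\Z$. Both conditions are exactly $c=Rb$, so $A^{b,c}$ preserves $\ell_o^p(\Z)$ and $\ell_e^p(\Z)$. The continuity of the projection $x\mapsto x_o=(x-Rx-e_0(x))/2$ (or rather the natural odd/even decomposition) on $\ell^p(\Z)$ is immediate from $|a\pm b|^p\le 2^{p-1}(|a|^p+|b|^p)$.

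Next I would check the identity $A_o^{b,c}=EA_+^bP$ index by index. For $x\in\ell_o^p(\Z)$ and $k\ge 2$ one reads off $(A^{b,c}x)_k=b_{k-1}x_{k-1}+c_k x_{k+1}$; the hypothesis that $b_j=1$ for $j\le 0$ together with $c=Rb$ forces $c_k=b_{-1-k}=1$ for all $k\ge 1$, matching $(A_+^b z)_k=b_{k-1}z_{k-1}+z_{k+1}$ for $k\ge 2$. The boundary case $k=1$ uses $x_0=0$ (odd), giving $(A^{b,c}x)_1=c_1 x_2=x_2=(A_+^b Px)_1$. Extending oddly with $E$ determines the values at $k\le 0$. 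Since $E$ is an isometric isomorphism onto $\ell_o^p(\Z)$ with $PE=I$ and $EP=I_{\ell_o^p(\Z)}$, we obtain $A_+^b=PA_o^{b,c}E$ and hence $\spec A_+^b=\spec A_o^{b,c}$ and $\specn^p_\eps A_+^b=\specn^p_\eps A_o^{b,c}$ for every $\eps>0$ and $p\in[1,\infty]$.

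To pass from $A_o^{b,c}$ to $A^{b,c}$, I would apply Lemma \ref{lem_sub} with $X=\ell^p(\Z)$, $X_1=\ell_o^p(\Z)$, $X_2=\ell_e^p(\Z)$ and the continuous projections just described; this immediately yields $\spec A_o^{b,c}\subset\spec A^{b,c}$ and $\specn^p_\eps A_o^{b,c}\subset\specn^p_\eps A^{b,c}$. (Note I do not need the stronger $p$-norm compatibility clause of Lemma \ref{lem_sub} for these containments — only the invariance of the two subspaces is used.) Finally, the equalities $\spec A^{b,c}=\spec A^{bc}$ and $\specn^p_\eps A^{b,c}=\specn^p_\eps A^{bc}$ are quoted directly from Lemma \ref{lem_sim}.

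I do not anticipate a serious obstacle; the only subtle point is the $k=1$ boundary term, which is exactly where the hypothesis $b_k=1$ for $k\le 0$ (equivalently $c_k=1$ for $k\ge 1$) is needed to make the truncated matrix $A_+^b$ agree with the restriction of $A^{b,c}$ to the odd subspace. Everything else is bookkeeping on matrix entries and standard functorial properties of isometric similarities and invariant subspaces.
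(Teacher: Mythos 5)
Your proof is correct and follows essentially the same route as the paper's: entry-by-entry verification of the subspace invariance and of the identity $A_o^{b,c}=EA_+^bP$ (with the hypothesis $b_k=1$ for $k\le 0$ entering exactly as you say, forcing $c_k=b_{-1-k}=1$ for $k\ge 1$), followed by transfer of spectra and pseudospectra through the isometric isomorphism $E=P^{-1}$ and an appeal to Lemmas \ref{lem_sub} and \ref{lem_sim}. Your handling of the $k=1$ boundary term and your observation that only the invariant-subspace half of Lemma \ref{lem_sub} is needed are, if anything, slightly more explicit than the paper's own argument.
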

\begin{proof} For $x\in \ell^2_o(\Z)$, $(A^{b,c}x)_0 = b_{-1}x_{-1}+c_0 x_1 = x_{-1} + x_1=0$ and, for $k\in\N$, $(A^{b,c}x)_{-k} = b_{-k-1}x_{-k-1}+c_{-k} x_{-k+1}= -c_{k}x_{k+1}-b_{k-1} x_{k-1}=-(A^{b,c}x)_k$, so that $A^{b,c}:\ell^p_o(\Z)\to\ell^p_o(\Z)$. Similarly, for $x\in \ell^2_e(\Z)$ and $k\in\N$, $(A^{b,c}x)_{-k} = b_{-k-1}x_{-k-1}+c_{-k} x_{-k+1}= c_{k}x_{k+1}+b_{k-1} x_{k-1}=(A^{b,c}x)_k$, so that $A^{b,c}:\ell^p_e(\Z)\to\ell^p_e(\Z)$. Further, for $k\in \N$, $(EA_+^bx)_k = b_{k-1}x_{k-1}+x_{k+1}= b_{k-1}x_{k-1} + c_kx_{k+1}=(A_o^{b,c}x)_k$, so that (\ref{eq_semi}) holds. Since $E$ and $P$ are isometric isomorphisms and $E=P^{-1}$, it follows that $\spec A_+^b = \spec A_o^{b,c}$ and that $\specn_\eps^p A_+ = \specn_\eps^p A_o^{b,c}$, for $\eps>0$ and $p\in [1,\infty]$. The remaining results follow from Lemma \ref{lem_sub} and Lemma \ref{lem_sim}.
\end{proof}

Putting the results from the previous section and this section together gives the following characterisations of the spectrum, essential spectrum, and pseudospectrum in the pseudo-ergodic case.

\begin{theorem} \label{thm_main} If $b,c,d\in \{\pm 1\}^\N$, $e,f,g\in \{\pm 1\}^\Z$, and $b$, $cd$, $e$, and $fg$ are pseudo-ergodic, then $\spec A^b_+=\spec A^{c,d}_+=\spec A^{e}=\spec A^{f,g}=\spess A^b_+=\spess A^{c,d}_+=\spess A^{e}=\spess A^{f,g}=\Sigma$ and, for $\eps>0$ and $p\in [1,\infty]$, where $q\in [1,\infty]$ is given by $p^{-1}+q^{-1}=1$, $\specn_\eps^p A^b_+=\specn_\eps^p A^{c,d}_+=\specn_\eps^p A^{e}=\specn_\eps^p A^{f,g}=\Sigma_\eps^p=\Sigma_\eps^q$. Further, for  $1\leq p \leq r\leq 2$ and $\eps>0$,
$\Sigma_\eps^r\subset \Sigma_\eps^p$.
\end{theorem}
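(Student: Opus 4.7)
The plan is to reduce all four operators (and their one-- vs two--diagonal variants) to $A^e$ for pseudo--ergodic bi--infinite $e$, for which Theorem \ref{thm_lo2} already delivers the answer. Lemma \ref{lem_sim} will be used to pass between one-- and two--diagonal matrices, and Lemma \ref{lem_ref} to pass between semi--infinite and bi--infinite matrices.

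The bi--infinite cases are quick. Since $e$ is pseudo--ergodic, $\spec A^e = \spess A^e = \Sigma$ and $\specn_\eps^p A^e = \Sigma_\eps^p$ are Theorem \ref{thm_lo2}. For $A^{f,g}$ with $fg$ pseudo--ergodic, Lemma \ref{lem_sim} produces a $\pm 1$--multiplication similarity between $A^{f,g}$ and $A^{fg}$, so spectrum, essential spectrum, and every $p$--pseudospectrum of $A^{f,g}$ agree with those of $A^{fg}$, and the first bullet applies with $e = fg$.

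For the semi--infinite matrix $A^b_+$ with $b \in \{\pm 1\}^\N$ pseudo--ergodic, the easy direction is to combine the semi--infinite form of Lemma \ref{pe_lo} (giving $\opsp(A_+^b) = \{A^c : c \in \{\pm 1\}^\Z\}$) with Theorem \ref{thm_losemi}: this immediately yields $\spess A_+^b = \Sigma$ and $\Sigma_\eps^p \subset \specn_\eps^p A_+^b$, and in particular $\Sigma \subset \spec A_+^b$. The main obstacle is the reverse inclusion $\spec A_+^b \subset \Sigma$, since semi--infinite limit operator methods only guarantee $\spec \supset \spess$ and a priori $A_+^b - \lambda I$ could be Fredholm of nonzero index at some $\lambda \notin \Sigma$. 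To close this, I would extend $b$ to $\tilde b \in \{\pm 1\}^\Z$ by setting $\tilde b_k := 1$ for $k \le 0$, put $c := R\tilde b$, and apply Lemma \ref{lem_ref}, which gives
$$
\spec A_+^b \subset \spec A^{\tilde b, c} = \spec A^{\tilde b \cdot c}, \qquad \specn_\eps^p A_+^b \subset \specn_\eps^p A^{\tilde b \cdot c}.
$$
A direct computation of $(\tilde b \cdot c)_k = \tilde b_k \tilde b_{-1-k}$ shows that $(\tilde b \cdot c)_k = b_k$ for every $k \ge 1$ (and equals $1$ for $k \in \{-1,0\}$, $b_{-1-k}$ for $k \le -2$); so the positive half of $\tilde b \cdot c$ reproduces $b$, and since $b$ is pseudo--ergodic on $\N$, $\tilde b \cdot c$ is pseudo--ergodic on $\Z$. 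Theorem \ref{thm_lo2} then gives $\spec A^{\tilde b \cdot c} = \Sigma$ and $\specn_\eps^p A^{\tilde b \cdot c} = \Sigma_\eps^p$, closing the sandwich. The semi--infinite two--diagonal case $A_+^{c,d}$ with $cd$ pseudo--ergodic then reduces to the one--diagonal case via the semi--infinite form of Lemma \ref{lem_sim} stated in the text.

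The last two assertions are immediate from Lemma \ref{lem_sim}. Taking the second diagonal to be the constant $1$ sequence (so $A^{e,\mathbf 1} = A^e$), the identity $\specn_\eps^p A^{b,c} = \specn_\eps^q A^{b,c}$ in that lemma specialises to $\Sigma_\eps^p = \Sigma_\eps^q$ for $p^{-1}+q^{-1}=1$, and its final inclusion gives $\Sigma_\eps^r \subset \Sigma_\eps^p$ whenever $1 \le p \le r \le 2$.
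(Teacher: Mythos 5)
Your proposal is correct and follows essentially the same route as the paper: Lemma \ref{lem_sim} (and its semi-infinite form) to reduce the two-diagonal cases to one-diagonal ones, Theorems \ref{thm_lo2} and \ref{thm_losemi} for the bi-infinite case and for $\Sigma\subset\spec A^b_+$, and Lemma \ref{lem_ref} combined with Lemma \ref{lem_sim} for the reverse inclusion $\spec A^b_+\subset\spec A^{\tilde b\cdot R\tilde b}\subset\Sigma$. The only (harmless) difference is that you additionally verify that $\tilde b\cdot R\tilde b$ is pseudo-ergodic to invoke Theorem \ref{thm_lo2} with equality, whereas the paper only needs the containment $\spec A^h\subset\Sigma$ valid for every $h\in\{\pm 1\}^\Z$.
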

\begin{proof} From  Lemma \ref{lem_sim} and the remarks following that lemma we have that $\spec A^{c,d}_+=\spec A_+^{cd}$, $\spess A_+^{c,d}=\spess A_+^{cd}$, $\spec A^{f,g}=\spec A^{fg}$, and $\spess A^{f,g}=\spess A^{fg}$. From Theorems \ref{thm_lo2}, \ref{thm_losemi}, and the remarks at the end of Section \ref{sec:lo}, we have moreover that if $b$ and $e$ are pseudo-ergodic then $\spec A^e = \spess A^e = \spess A^b_+ = \Sigma$. This implies that $\Sigma \subset \spec A^b_+$, and that $\spec A^b_+ \subset \Sigma$ follows from Lemma \ref{lem_ref} which, together with Lemma \ref{lem_sim}, gives that $\spec A^b_+ \subset \spec A^{b,Rb} = \spec A^{bRb} \subset \Sigma$.
 The results for the pseudospectrum are shown similarly, again using Theorems \ref{thm_lo2}, \ref{thm_losemi}, the remarks at the end of Section \ref{sec:lo}, and Lemma \ref{lem_sim}.
\end{proof}

\section{The relationship between the spectra and pseudospectra of finite and infinite matrices} \label{sec:fininf}
An obvious method to try to calculate the spectrum of an infinite matrix is to study the spectra of large finite submatrices of the infinite matrix and hope that these provide good approximations. In particular, one can apply this idea to the infinite matrix $A^b_+$, and hope that the spectrum of the $n\times n$ matrix $A^b_n$, the intersection of the first $n$ rows and columns of $A^b_+$, will approximate the spectrum of $A^b_+$ well for $n$ large.

In general  the spectrum and pseudospectrum of an infinite banded matrix may or may not be well-approximated by the spectra and pseudospectra of its finite submatrices (see \cite{LindnerRoch2010} and the references therein for some discussion, with emphasis on the case of tridiagonal pseudo-ergodic matrices). In particular, there need be no relationship at all between the spectrum of a bi-infinite matrix and the spectra of its finite sections. A simple example is provided by the Laurent operator that is the shift operator $V_{-1}$ with matrix representation $(a_{ij})_{ij\in\Z}$, with $a_{ij}=\delta_{i,j+1}$, whose spectrum is the unit circle. The Toeplitz matrices that are its $n\times n$ finite sections, $(a_{ij})_{1\leq i,j\leq n}$, clearly have zero as the only eigenvalue.

The purpose of this section is to show that, for the particular class of pseudo-ergodic operators we are studying, there is a perhaps surprisingly close (given that our pseudo-ergodic operators are not self-adjoint or normal) connection between the spectral sets in the finite and infinite case. This connection is particularly close for the pseudospectra.

\subsection{That the finite matrix spectral sets are contained in the infinite matrix counterparts}

For $n\in\N$, introduce the $n\times n$
matrices
\[
I_n\ =\ \left(\begin{array}{ccc} 1\\ &\smash{\ddots}\\& &1\end{array}\right)
\qquad\textrm{and}\qquad J_n\ =\ \left(\begin{array}{ccc}& &1\\ &\rdots\\
1\end{array}\right),
\]
so that $I_n$ is the order $n$ identity matrix. The proof of the following result uses a similar construction to that of the bi-infinite matrix $A^{b,c}$ in the proof of Lemma \ref{lem_ref}.

\begin{theorem} \label{prop:spec_An}
If $b$ is pseudo-ergodic then, for $n\in\N$,
$$
\spec A_{n}^{b}\subset \sigma_n := \bigcup_{f\in\{\pm 1\}^{n-1}} \spec A_n^f\  \subset \pi_{2n+2}\subset
\spec A^{b} = \Sigma.
$$
\end{theorem}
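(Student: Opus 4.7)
The statement has three inclusions and one equality. The equality $\spec A^b = \Sigma$ is immediate from Theorem \ref{thm_lo2}, since $b$ is pseudo-ergodic. The first inclusion, $\spec A_n^b \subset \sigma_n$, is trivial: setting $f = (b_1, \dots, b_{n-1}) \in \{\pm 1\}^{n-1}$, the matrix $A_n^b$ coincides with $A_n^f$, so $\spec A_n^b$ is one of the sets in the union defining $\sigma_n$. The last inclusion, $\pi_{2n+2} \subset \Sigma$, is (\ref{eq:spperntri}) applied with $n$ replaced by $2n+2$.

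The heart of the argument is the middle inclusion $\sigma_n \subset \pi_{2n+2}$. The plan is: given arbitrary $f \in \{\pm 1\}^{n-1}$ and $\lambda \in \spec A_n^f$ with eigenvector $v=(v_1,\dots,v_n)^\top\neq 0$ satisfying $f_{k-1}v_{k-1}+v_{k+1}=\lambda v_k$ for $k=1,\dots,n$ (with the convention $v_0=v_{n+1}=0$), I will construct some $\tilde b\in\Pi_{2n+2}$ and a non-zero bounded $(2n+2)$-periodic $\tilde v\in\ell^\infty(\Z)$ with $A^{\tilde b}\tilde v=\lambda\tilde v$. This yields $\lambda\in\sppt A^{\tilde b}\subset \pi_{2n+2}$ by the second equality in (\ref{eq:spperntri}).

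The construction is modelled on the odd-reflection trick of Lemma \ref{lem_ref}, adapted to a single period of length $2n+2$. On the representative period of indices $0,1,\dots,2n+1$, I set $\tilde v_0=\tilde v_{n+1}=0$, $\tilde v_k=v_k$ for $k=1,\dots,n$, and $\tilde v_{n+1+m}=\epsilon_m v_{n+1-m}$ for $m=1,\dots,n$, where $\epsilon_m\in\{\pm 1\}$ are reflection signs to be determined; then $\tilde v$ is extended periodically. Writing out $(A^{\tilde b}\tilde v)_j=\lambda\tilde v_j$ at each $j$ in one period forces $\tilde b_k=f_k$ for $k=1,\dots,n-1$ (which is exactly the original eigenvalue equation on the ``left block''), the sign recurrence $\epsilon_{m+1}=\epsilon_m f_{n-m}$, and consequently $\tilde b_{n+m}=f_{n-m+1}$ for $m=2,\dots,n$ (a reflected copy of $f$ on the right block). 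The two indices $j=n+1$ and $j=0$, where $\tilde v_j=0$, impose only the consistency conditions $\epsilon_1=-\tilde b_n$ and $\tilde b_{2n+1}=-\epsilon_n$; choosing, e.g., $\tilde b_n=1$ gives $\epsilon_1=-1$ and, iterating the recurrence, $\tilde b_{2n+1}=\prod_{k=1}^{n-1}f_k\in\{\pm 1\}$. The remaining entries $\tilde b_0$ and $\tilde b_{n+1}$ multiply the zero entries $\tilde v_0$ and $\tilde v_{n+1}$ and may be assigned arbitrarily in $\{\pm 1\}$. Non-vanishing of $\tilde v$ is inherited from $v\neq 0$, so $\tilde v$ is a genuine bounded periodic eigenvector of $A^{\tilde b}$ with eigenvalue $\lambda$.

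The main obstacle is the sign bookkeeping: verifying that all $2n+2$ scalar identities $(A^{\tilde b}\tilde v)_j=\lambda\tilde v_j$ can be satisfied simultaneously with $\tilde b$ taking values in $\{\pm 1\}$ and with $\epsilon_m\in\{\pm 1\}$. The two apparent degrees of freedom in $\tilde b$ (the entries $\tilde b_0$ and $\tilde b_{n+1}$) precisely match the two zero boundary values $\tilde v_0=\tilde v_{n+1}=0$, which is what makes the system consistent; the explicit recurrences above confirm this.
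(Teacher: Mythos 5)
Your proposal is correct and follows essentially the same route as the paper: both proofs extend the finite eigenvector by (signed) reflection about its zero boundary values to a $(2n+2)$-periodic bounded eigenvector of a $(2n+2)$-periodic operator, which places $\lambda$ in $\pi_{2n+2}$ via (\ref{eq:spperntri}). The only difference is cosmetic: the paper first builds the two-diagonal operator $A^{c,d}$ from unsigned reflected blocks of $A_n^f$ and then invokes the gauge equivalence of Lemma \ref{lem_sim} to pass to the single-diagonal $A^{cd}$, whereas you absorb that diagonal similarity into the signs $\epsilon_m$ and construct the single-diagonal periodic matrix $A^{\tilde b}$ directly.
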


\noindent
\ifpics{ 
\begin{center}
\begin{tabular}{ccc}
\includegraphics[width=0.3\textwidth]{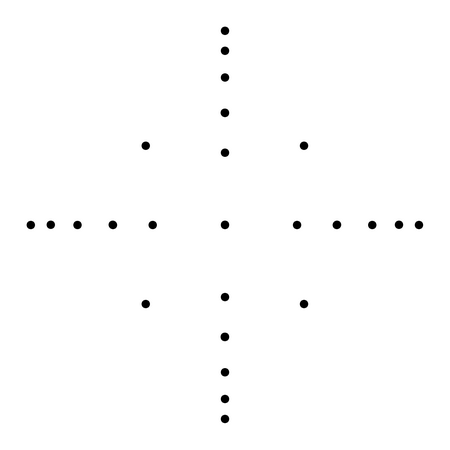}&
\includegraphics[width=0.3\textwidth]{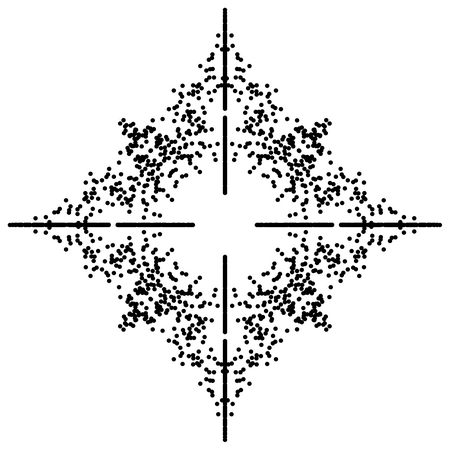}&
\includegraphics[width=0.3\textwidth]{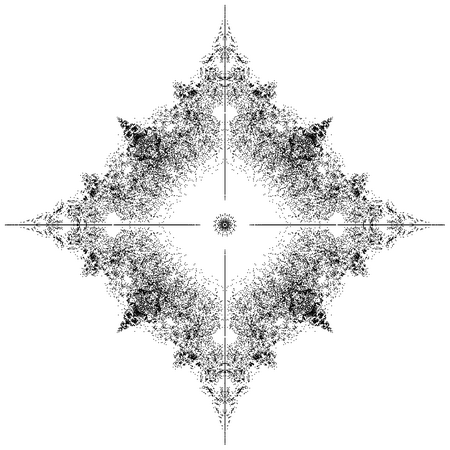}\\
\includegraphics[width=0.3\textwidth]{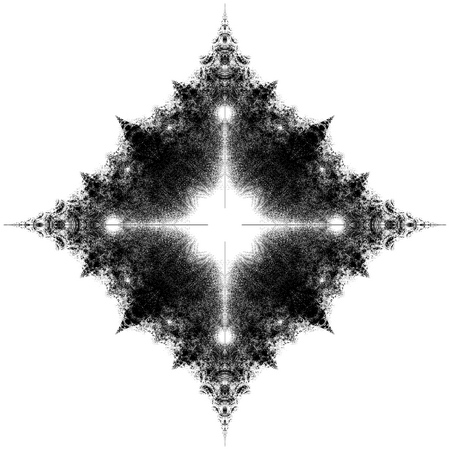}&
\includegraphics[width=0.3\textwidth]{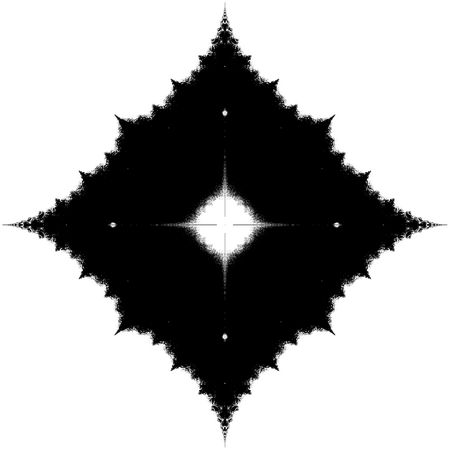}&
\includegraphics[width=0.3\textwidth]{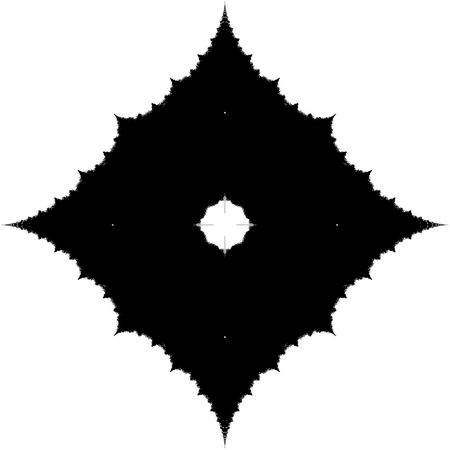}
\end{tabular}
\end{center}
}\fi 
\begin{figure}[h]
\caption{\footnotesize Our figure shows the sets $\sigma_n$ of all $n\times n$ matrix eigenvalues, as defined in
Theorem \ref{prop:spec_An}, for $n=5,10,...,30$. Note that in the first pictures
(with only a few eigenvalues), we have used heavier pixels for the
sake of visibility.}  \label{fig:30pics2}
\end{figure}
\begin{proof}
If $\lambda\in \spec A_{n}^{f}$, for some $f\in \{\pm 1\}^{n-1}$, then $A_{n}^{f}x=\lambda x$ for
some non-zero $x\in\C^n$. Put $\widehat x:= J_{n}x$ and
$\widehat{A^{f}_{n}}:= J_{n}A^{f}_{n}J_{n}$. Then
\[
\widehat{A^{f}_{n}}\widehat{x}\ =\ J_{n}A^{f}_{n}J_{n}J_{n}x\ =\
J_{n}A^{f}_{n}x\ =\ J_{n}\lambda x=\lambda \widehat{x}
\]
and hence, using repeated reflections, i.e. by putting
\[
A^{c,d}:=
\left(\begin{array}{ccccccc}
\ddots&
\begin{array}{ccc}1&& \end{array}\\
\cline{2-2}
\begin{array}{c}1\\ \\ \\ \end{array}&
\multicolumn{1}{|c|}{\widehat{A_n^f}}&
\begin{array}{c} \\ \\1 \end{array}\\
\cline{2-2}
&\begin{array}{ccr}&&-1\end{array}
& 0 & \begin{array}{lcc}1&&\end{array}\\
\cline{4-4}
& & \begin{array}{c}1\\ \\ \\ \end{array}&
\multicolumn{1}{|c|}{{A_n^f}}&
\begin{array}{c} \\ \\1 \end{array}\\
\cline{4-4}
&&&\begin{array}{ccr}&&-1\end{array}
& 0 & \begin{array}{lcc}1&&\end{array}\\
\cline{6-6}
& && & \begin{array}{c}1\\ \\ \\ \end{array}&
\multicolumn{1}{|c|}{\widehat{A_n^f}}&
\begin{array}{c} \\ \\1 \end{array}\\
\cline{6-6}
&&&&&
\begin{array}{ccc}&&-1\end{array}&
\ddots
\end{array}\right)\quad \textrm{and}\quad
\widetilde x:=\left(\begin{array}{c}
\vdots\\
\hline
\multicolumn{1}{|c|}{~}\\
\multicolumn{1}{|c|}{\widehat{x}}\\
\multicolumn{1}{|c|}{~}\\
\hline
0\\
\hline
\multicolumn{1}{|c|}{~}\\
\multicolumn{1}{|c|}{{x}}\\
\multicolumn{1}{|c|}{~}\\
\hline
0\\
\hline
\multicolumn{1}{|c|}{~}\\
\multicolumn{1}{|c|}{\widehat{x}}\\
\multicolumn{1}{|c|}{~}\\
\hline
\vdots
\end{array}
\right),
\]
we get $A^{c,d}\widetilde{x}=\lambda \widetilde{x}$ with
$\widetilde{x}\in\ell^{\infty}(\Z)$, so that $\lambda$ is an eigenvalue
of $A^{c,d}$ as an operator on $\ell^{\infty}(\Z)$. Thus, applying Lemma \ref{lem_sim} and Theorem \ref{thm_lo2}, and noting that both $c$ and $d$ are periodic, with period $2n+2$, we see that
$
\lambda \in \spec A^{c,d} = \spec A^{cd} \subset \pi_{2n+2} \subset \Sigma = \spec A^b.
$
\end{proof}

\begin{center}
\ifpics{\includegraphics[width=0.6\textwidth]{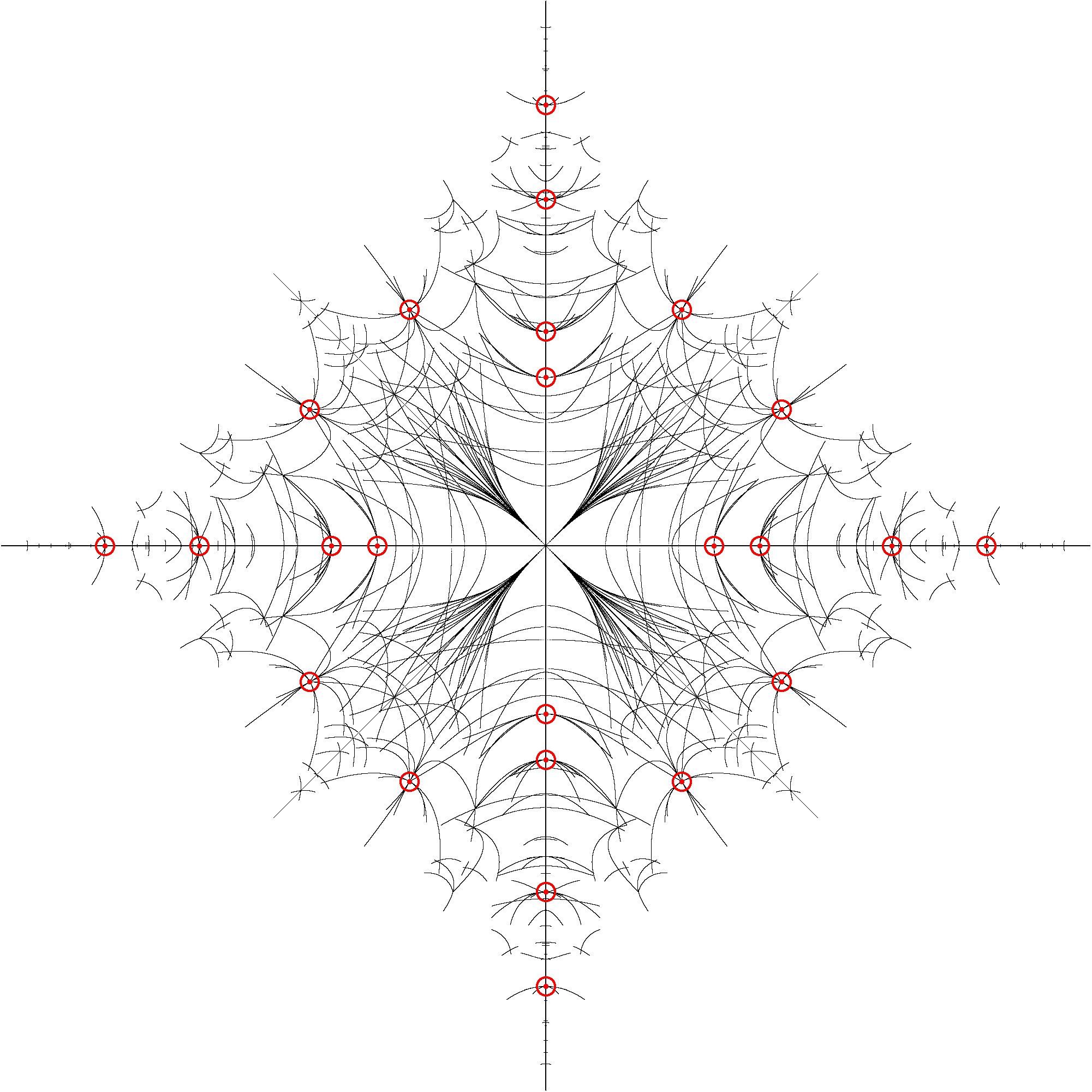}}\fi
\end{center}
\begin{figure}[h]
\caption{\footnotesize An illustration of  the inclusion $\sigma_4\ \subset\
\pi_{10}$, which holds by Theorem \ref{prop:spec_An}. (The points in $\sigma_4$ are indicated by circled dots.) For similar figures for other values of $n$ see \cite{Li:Habil}.}
\label{fig:pic5in12}
\end{figure}

In Figure \ref{fig:30pics2} we plot the sets $\sigma_n$, for $n=5,10,...,30$ (note that each set $\sigma_n$ is invariant under reflection in either axis or under rotation by $90^0$, by Lemma \ref{lem_symm}, and see \cite{CWChonchaiyaLindner2011} for smaller plots of these sets for $n=1,...,30$). By the above theorem, $\sigma_n \subset \pi_{2n+2}$ for each $n$, so that
$$
\sigma_\infty := \bigcup_{n\in\N} \sigma_n \subset \pi_\infty.
$$
The inclusion $\sigma_n\subset \pi_{2n+2}$ is illustrated for $n=4$ in Figure \ref{fig:pic5in12}.

An interesting question, alluded to already in Section \ref{sec:lo}, is whether $\pi_\infty$, which is contained in $\Sigma$, or $\sigma_\infty$, which is a countable subset of $\pi_\infty$, are dense in $\Sigma$, the spectrum of $A^b$ for $b$ pseudo-ergodic. Of course, we do not know what $\Sigma$ is, so that this question is difficult to resolve! We do know however (Theorem \ref{thm_sier}) that the unit disc  $\D\subset \Sigma$, and we can consider the question as to whether $\pi_\infty$ or $\sigma_\infty$ are dense in $\D$.
Recall that the sets $\pi_n$, for $n=5,10,...,30$, are plotted already in Figure \ref{fig:30pics1}. Studying Figures \ref{fig:30pics1} and \ref{fig:30pics2}, it appears that there is a ``hole'' in both $\sigma_n$ and $\pi_n$ around the origin, though these holes appear to be reducing in size as $n$ increases. And in fact, as mentioned already in Section \ref{sec:lo}, it has been shown recently that $\pi_\infty$ is dense in $\D$. Further, it appears to us plausible, comparing the two figures, to conjecture that $\sigma_\infty$ is dense in $\pi_\infty$ and so dense in $\D$.

\noindent
\begin{center}
\ifpics{\includegraphics[width=0.8\textwidth]{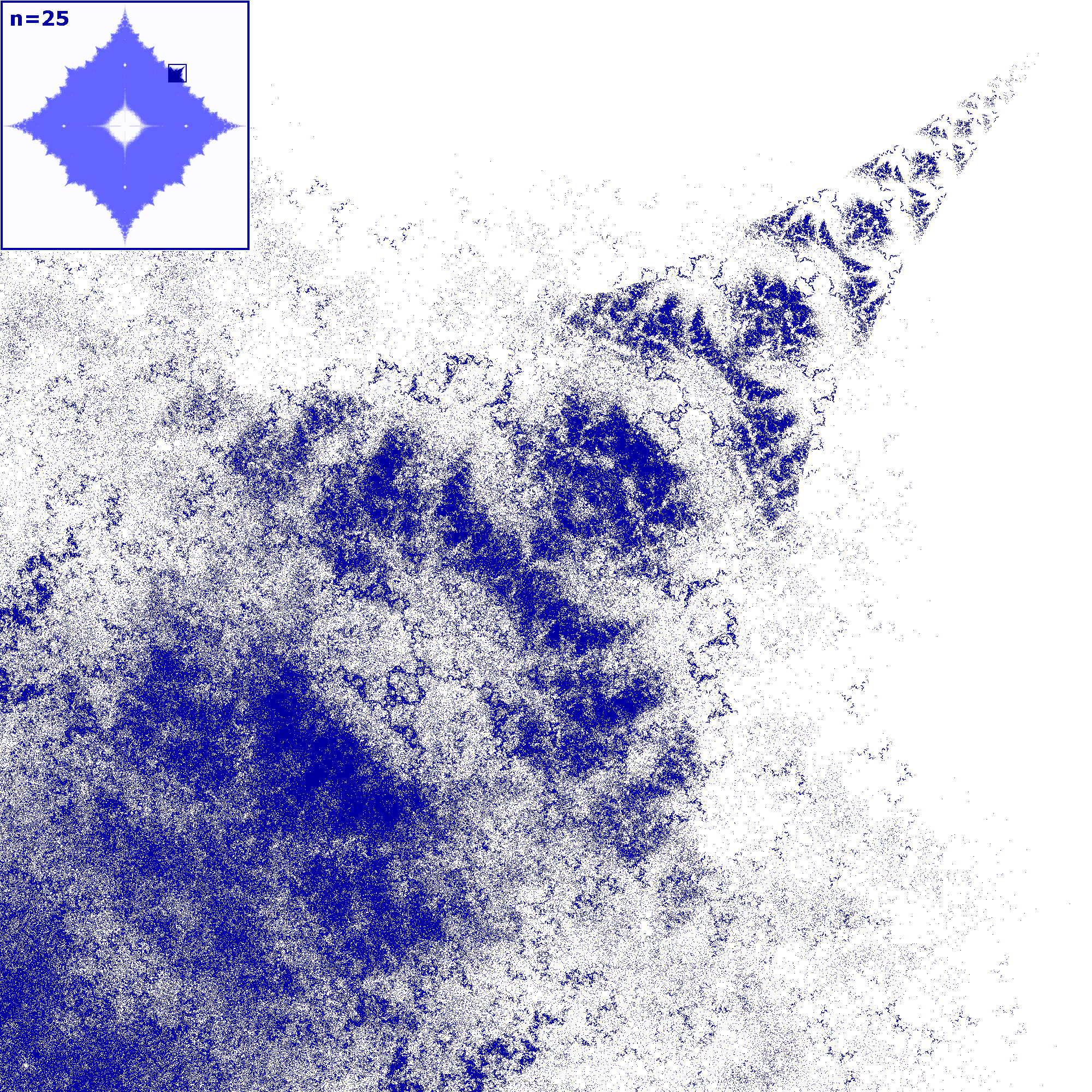}}\fi
\end{center}
\begin{figure}[h]
\caption{\footnotesize This is a zoom into $\sigma_{25}$ -- the 5th
picture of Figure \ref{fig:30pics2}. The location of this zoom is
near the point $1+i$, which is the midpoint of the northeast edge of
the square $W(A^b) = \Delta$. The
picture clearly suggests self-similar features of the set $\sigma_{25}$.}
\label{fig:pic25zoom}
\end{figure}

Figure \ref{fig:pic25zoom}, taken from \cite{CWChonchaiyaLindner2011}, zooms into the part of the set $\sigma_{25}$ around $1+\ri$. Intriguingly this set, the collection of all eigenvalues of a set of $2^{24}$ matrices of size $25\times 25$  ($25\times 2^{24}=419,430,400$ eigenvalues in all!), appears to have a self-similar structure. We have no explanation for these beautiful geometrical patterns, and it is not clear to us how to gain insight into the geometry of this set.

In the next theorem and corollary we show the analogue  of Theorem \ref{prop:spec_An}
for pseudospectra.

\begin{theorem}\label{prop:ineq resolvent}
If $b$ is pseudo-ergodic and $n\in\N$ then, for all $\lambda\in\C\setminus \Sigma$, $f\in \{\pm 1\}^{n-1}$, and $p\in [1,\infty]$,
\[
\|(A^{f}_{n}-\lambda I_{n})^{-1}\|_p\ \leq\ \|(A^{b}-\lambda
I)^{-1}\|_p.
\]
\end{theorem}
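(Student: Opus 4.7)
The plan is to lift the eigenvector construction from the proof of Theorem~\ref{prop:spec_An} to quasi-eigenvectors for the lower norm, and then chain three estimates. Writing $N := 2n+2$ and letting $A^{c,d}$ denote the $N$-periodic bi-infinite matrix built in that proof, with alternating $A^f_n$ and $\widehat{A^f_n}$ blocks along the diagonal separated by single zero rows/columns whose sub- and super-diagonal entries are $-1$ and $+1$ respectively, I will establish
\[
\|(A^f_n - \lambda I_n)^{-1}\|_p \;\le\; \|(A^{c,d} - \lambda I)^{-1}\|_p \;=\; \|(A^{cd} - \lambda I)^{-1}\|_p \;\le\; \|(A^b - \lambda I)^{-1}\|_p.
\]
Each resolvent above is well defined because $\lambda \notin \Sigma$ and, by Theorem~\ref{prop:spec_An} and Lemma~\ref{lem_sim}, $\spec A^{c,d} = \spec A^{cd} \subset \pi_N \subset \Sigma$, while $\spec A^f_n \subset \Sigma$ also by Theorem~\ref{prop:spec_An}.

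The middle equality is just Lemma~\ref{lem_sim}. The right-hand inequality is a consequence of Theorem~\ref{thm_lo}: since $b$ is pseudo-ergodic and $cd \in \{\pm 1\}^\Z$, Lemma~\ref{pe_lo} yields $A^{cd} \in \opsp(A^b)$, so $\specn_\eps^p A^{cd} \subset \specn_\eps^p A^b$ for every $\eps > 0$; taking $\eps = 1/\|(A^{cd} - \lambda I)^{-1}\|_p - \delta$ and letting $\delta \to 0^+$ converts this pseudospectral inclusion into the claimed resolvent inequality.

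The main work is in the left-hand inequality, equivalently $\nu(A^{c,d} - \lambda I) \le \nu(A^f_n - \lambda I_n)$, where $\nu$ is the lower norm from Theorem~\ref{thm_pse}(ii). Given any nonzero $x \in \C^n$, I set $\widehat x := J_n x$ and define $\tilde x \in \ell^\infty(\Z)$ by placing $x$ on every $A^f_n$-block of the periodic pattern of $A^{c,d}$, $\widehat x$ on every $\widehat{A^f_n}$-block, and zero at every connector position. Using $\widehat{A^f_n}\widehat x = J_n A^f_n x$ together with the specific $\pm 1$ values of the connector entries, a computation analogous to the one in the proof of Theorem~\ref{prop:spec_An} shows that $(A^{c,d} - \lambda I)\tilde x$ reproduces $(A^f_n - \lambda I_n)x$ (up to the reordering by $J_n$) at every position inside a block, and vanishes at every connector position lying strictly between two blocks, by the cancellations $(-1)\,x_n + (+1)\,\widehat x_1 = 0$ and $(-1)\,\widehat x_n + (+1)\,x_1 = 0$ in the two possible block configurations.

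For $p = \infty$ the vector $\tilde x$ can be taken with full infinite support, yielding $\|\tilde x\|_\infty = \|x\|_\infty$ and $\|(A^{c,d} - \lambda I)\tilde x\|_\infty = \|(A^f_n - \lambda I_n)x\|_\infty$, which already suffices. For $p \in [1,\infty)$, $\tilde x$ must be truncated to $M$ consecutive blocks, giving
\[
\|\tilde x\|_p^p = M\|x\|_p^p \qquad \text{and} \qquad \|(A^{c,d} - \lambda I)\tilde x\|_p^p \le M\|(A^f_n - \lambda I_n)x\|_p^p + C,
\]
where the bounded constant $C$, independent of $M$, accounts for the at most two boundary connector positions immediately outside the truncated support, at which only one of the would-be cancelling contributions survives and the value is $\pm x_1$ or $\pm x_n$. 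Letting $M \to \infty$ renders the boundary term negligible and produces $\nu(A^{c,d} - \lambda I) \le \|(A^f_n - \lambda I_n)x\|_p/\|x\|_p$ for every nonzero $x$, and taking the infimum over $x$ completes the argument. The principal technical obstacle lies in verifying that the connector cancellations hold in each of the alternating block configurations, and in controlling the boundary terms uniformly in $M$ so that the infimum over $x$ passes cleanly through the limit in $M$.
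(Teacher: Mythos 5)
Your proposal is essentially the paper's own proof: the same chain $\|(A^f_n-\lambda I_n)^{-1}\|_p\le\|(A^{c,d}-\lambda I)^{-1}\|_p=\|(A^{cd}-\lambda I)^{-1}\|_p\le\|(A^b-\lambda I)^{-1}\|_p$, the same periodised test vector $\tilde x$ built from $x$ and $J_nx$ with vanishing connector entries, the same split into $p=\infty$ versus $p<\infty$ with truncation and an $O(1)$ boundary term that is killed by letting the number of blocks tend to infinity. The only cosmetic differences are that you phrase the first inequality via lower norms rather than near-optimal vectors, and derive the last inequality from the pseudospectral inclusion in Theorem~\ref{thm_lo} rather than citing the resolvent estimate for limit operators directly --- note only that your perturbation should be $\eps = \|(A^{cd}-\lambda I)^{-1}\|_p^{-1}+\delta$ (so that $\lambda\in\specn^p_\eps A^{cd}$), not $-\delta$.
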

\begin{proof}
Let $\lambda\in\C\setminus \Sigma$ and $f\in \{\pm 1\}^{n-1}$, so that $A^{f}_{n}-\lambda I_{n}$ is invertible by Theorem \ref{prop:spec_An}, and let $p\in [1,\infty]$.
Put $M:=\|(A^{f}_{n}-\lambda I_{n})^{-1}\|_p$. For every $\delta>0$,
there exists an $x=(x_1,...,x_n)^\top\in\C^n$ such that
$\left\|x\right\|_p=1$ and $y:= (A^{f}_{n}-\lambda I_{n})x$ has
$\left\|y\right\|_p<{\frac{1}{M-\delta}}$. Now let $c,d\in\{\pm
1\}^{\Z}$ be the sequences in the matrix  $A^{c,d}$ introduced in the
proof of Theorem \ref{prop:spec_An}. At this point our current proof has
to bifurcate depending on the value of $p$.

\noindent\underline{Case 1}: $p=\infty$ \\
Define $\widetilde{x}\in\ell^{\infty}(\Z)$ exactly as in the proof
of Theorem \ref{prop:spec_An}. Then
$\widetilde{y}:=(A^{c,d}-\lambda I)\widetilde{x}$ is of the form
$\widetilde{y}= (\cdots ,y^\top,0,(J_n y)^\top,0,y^\top,0,(J_n
y)^\top, \cdots)^\top\in \ell^{\infty}(\Z)$ and
$\left\|\widetilde{y}\right\|_{\infty}=\left\|y\right\|_{\infty}$,
as well as
$\left\|\widetilde{x}\right\|_{\infty}=\left\|x\right\|_{\infty}$,
so that
\[
\left\|(A^{c,d}-\lambda
I)^{-1}\right\|_\infty\geq{\frac{\left\|\widetilde{x}\right\|_{\infty}}{\left\|\widetilde{y}\right\|_{\infty}}=\frac{\left\|x\right\|_{\infty}}{\left\|y\right\|_{\infty}}>M-\delta}.
\]

\noindent\underline{Case 2}: $p<\infty$\\
For any $m\in\N$, let $\widetilde{x}^{(m)}$ be the sequence
$\widetilde{x}$ from case 1, but with all entries of index outside
$\{-m(n+1),\ldots,m(n+1)\}$ put to zero (where we suppose that the sequence $\widetilde x$ is numbered so that at
index zero there is one of the $0$ entries between $x$ and
$\widehat x$ of $\widetilde x$, so that $\widetilde x^{(m)}_0=0$). Then
$\widetilde{y}^{(m)}:=(A^{c,d}-\lambda I)\widetilde{x}^{(m)}$ is the
same as $\widetilde{y}$ from Case 1 for entries with index between
$-m(n+1)+1$ and $m(n+1)-1$, is zero outside
$\{-m(n+1)-1,\ldots,m(n+1)+1\}$ and we have
$\widetilde{y}^{(m)}_{-m(n+1)}=x_1$ if $m$ is even and $\widetilde{y}^{(m)}_{-m(n+1)}=x_n$ if
$m$ is odd, while $\widetilde{y}^{(m)}_{m(n+1)}=-x_1$ if $m$ is even and
$\widetilde{y}^{(m)}_{m(n+1)}=-x_n$ if $m$ is odd. As a result, we find that
\[
\|\widetilde{x}^{(m)}\|_{p}^{p}=2m\left\|x\right\|_{p}^{p}
\qquad\textrm{and}\qquad
\|\widetilde{y}^{(m)}\|_{p}^{p}=2m\left\|y\right\|_{p}^{p}+\left\{\begin{array}{cl}
2\,|x_1|^{p} &\textrm{if
$m$ is even},\\
2\,|x_n|^{p}&\textrm{if $m$ is odd.}\end{array}\right.
\]
From $\|x\|_{p}=1$, $\|y\|_p<\frac{1}{M-\delta}$ and
$|x_1|,|x_n|\leq\|x\|_{p}=1$ we hence get that
$\|\widetilde{x}^{(m)}\|_p=2m$ and
$\|\widetilde{y}^{(m)}\|_{p}^{p}<2m\frac{1}{(M-\delta)^{p}}+2$, so that
\[
\|(A^{c,d}-\lambda I)^{-1}\|_p^{p}\ \geq\
\frac{\|\widetilde{x}^{(m)}\|_{p}^{p}}{\|\widetilde{y}^{(m)}\|_{p}^{p}}
\ >\ \frac{2m}{\frac{2m}{(M-\delta)^{p}}+2}\ =\
\frac{1}{\frac{1}{(M-\delta)^{p}}+\frac{1}{m}}.
\]

In either case, Case 1 or 2, these inequalities hold for all
$\delta>0$ and all $m\in\N$. Hence,  and applying Lemma \ref{lem_sim},
$$
\|(A_{n}^{f}-\lambda I_{n})^{-1}\|_p = M \leq \|(A^{c,d}-\lambda
I)^{-1}\|_p = \|(A^{cd}-\lambda
I)^{-1}\|_p \leq \|(A^{b}-\lambda
I)^{-1}\|_p,
$$
where the last inequality follows by \cite[Theorem 5.12(ix)]{CWLi2008:Memoir}, since
$A^{cd}$ is a limit operator of $A^b$ by Lemma \ref{pe_lo}.
\end{proof}

The following corollary is immediate.

\begin{corollary}\label{cor:union_sp2}
If $b$ is pseudo-ergodic and $n\in\N$ then, for all $\eps>0$ and
$p\in [1,\infty]$,
\[
\sigma_{n,\eps}^p := \bigcup_{c\in\{\pm 1\}^{n-1}}\specn_\eps^p\, A_{n}^{c}\ \subseteq\
\specn_\eps^p\, A^{b} = \Sigma_\eps^p,
\]
and in particular $\specn_\eps^p\, A_{n}^{b}\ \subset\
\specn_\eps^p\, A^{b}$.
\end{corollary}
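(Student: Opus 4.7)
The corollary should follow almost immediately from the preceding theorem by unwrapping the definition of the pseudospectrum. Here is the plan.

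The first step is to fix $\lambda$ in $\sigma_{n,\eps}^p$ and choose $c\in\{\pm1\}^{n-1}$ with $\lambda\in\specn_\eps^p A_n^c$. By definition (Theorem \ref{thm_pse}(i)), either $\lambda\in\spec A_n^c$ or else $\lambda$ is in the resolvent set of $A_n^c$ with $\|(A_n^c-\lambda I_n)^{-1}\|_p>\eps^{-1}$. I would handle these two cases separately.

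In the first case, Theorem \ref{prop:spec_An} gives $\spec A_n^c\subset\sigma_n\subset\Sigma=\spec A^b$, and since $\spec A^b\subset\specn_\eps^p A^b$ trivially, we are done. In the second case, I would dichotomise again according to whether $\lambda\in\Sigma$ or not. If $\lambda\in\Sigma=\spec A^b$ then $\lambda\in\specn_\eps^p A^b$ and we are done again. If $\lambda\notin\Sigma$, then Theorem \ref{prop:ineq resolvent} applies and yields
\[
\|(A^b-\lambda I)^{-1}\|_p\ \geq\ \|(A_n^c-\lambda I_n)^{-1}\|_p\ >\ \eps^{-1},
\]
so that $\lambda\in\specn_\eps^p A^b$ by the definition of the pseudospectrum. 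Taking the union over $c\in\{\pm1\}^{n-1}$ gives the inclusion $\sigma_{n,\eps}^p\subset\specn_\eps^p A^b$.

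Finally, the identity $\specn_\eps^p A^b=\Sigma_\eps^p$ is simply Theorem \ref{thm_main} applied to the pseudo-ergodic sequence $b$, and the last assertion $\specn_\eps^p A_n^b\subset\specn_\eps^p A^b$ is the special case $c=b$ of the union. There is no real obstacle here: the only mild subtlety is remembering to treat the case $\lambda\in\spec A_n^c$ separately (so as to invoke Theorem \ref{prop:spec_An} rather than Theorem \ref{prop:ineq resolvent}, which requires $\lambda\notin\Sigma$ to have the resolvent on the right-hand side make sense).
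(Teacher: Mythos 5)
Your proof is correct and is precisely the deduction the paper intends when it declares the corollary ``immediate'' from Theorem \ref{prop:ineq resolvent}: you combine that resolvent inequality (valid for $\lambda\notin\Sigma$) with the spectral inclusion $\spec A_n^c\subset\Sigma$ from Theorem \ref{prop:spec_An} and the definition of the pseudospectrum, which is exactly the right case split. No gaps.
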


\subsection{Convergence of the finite matrix spectral sets to their infinite matrix counterparts} \label{sec:conv}

As we have remarked at the beginning of this section, it is not clear that the spectrum of a general banded matrix should have anything to do with the spectra of its finite submatrices. In particular, it need not be the case either that the spectrum of a large finite submatrix is contained in a neighbourhood of the spectrum of the corresponding infinite matrix, or that the converse statement is true. But the situation is somewhat more positive for the pseudospectrum, namely that, as we show for a general tridiagonal matrix as our first result of this section (and our method of argument applies to banded matrices more generally), the $\eps$-pseudospectrum of the infinite matrix  is contained in the $\eps^\prime$-pseudospectrum of an  appropriately chosen $n\times n$ submatrix, for a given $\eps^\prime>\eps$, provided $n$ is sufficiently large. The argument is based on a standard and rather obvious idea: the point is that every eigenvector, or approximate eigenvector, of the infinite matrix is, when truncated in a careful way, also an approximate eigenvector of the finite matrix.

The opposite statement is, in general, false; an approximate eigenvector of a large finite matrix is an approximate eigenvector also of an infinite matrix $B$, but $B$ 
need not be the infinite matrix whose spectrum one wishes to approximate! (One recent result which expresses this idea very precisely in the $\ell^2$ case for a version of the finite section method for the class of general pseudo-ergodic tridiagonal matrices is \cite[Theorem 2.14]{LindnerRoch2010}.) But, for the pseudo-ergodic operators $A^b$ and $A^b_+$ that we are studying, we have also shown, in Corollary \ref{cor:union_sp2} and Theorem \ref{thm_main}, that $\specn_\eps^p\, A_{n}^{b}\ \subset
\specn_\eps^p A^{b}= \specn_\eps^p A_+^p$. Putting this result together with Theorem \ref{thm_pseudo_converg} proves that $\specn_\eps^p A_n^b \nearrow \specn_\eps^p A^b$ (using the notation of the introduction and Section \ref{sec:pseudo}).

\begin{theorem} \label{thm_pseudo_converg} Suppose that $A=(a_{ij})_{i,j\in \Z}$ is a bi-infinite tridiagonal matrix with $M:= \sup_{ij}|a_{ij}|<\infty$. Define the
semi-infinite matrix $A_+$ by $A_+=(a_{ij})_{i,j\in\N}$ and, for $\ell,m\in \N$ with $\ell\leq m$, define the finite matrix of order $m+1-\ell$ by $A_{\ell,m} = (a_{ij})_{i,j\in \{\ell,...,m\}}$. Then, for every $\eps^\prime>\eps>0$ and $p\in [1,\infty]$, there exists $N\in \N$ such that
\begin{equation} \label{eq:inc1}
\specn_\eps^p A \subset \specn_{\eps^\prime}^p A_{\ell,m}, \quad \mbox{ for } \ell \leq -N \mbox{ and } m \geq N,
\end{equation}
and
\begin{equation} \label{eq:inc2}
\specn_\eps^p A_+ \subset \specn_{\eps^\prime}^p A_{1,m}, \quad \mbox{ for } m \geq N.
\end{equation}
\end{theorem}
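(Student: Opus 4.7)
The plan is to apply the characterisation of pseudospectra in Theorem \ref{thm_pse}(iv): for $\lambda_0\in\specn_\eps^p A$, either $\lambda_0$ is an $\eps$-pseudoeigenvalue of $A$ on $\ell^p(\Z)$, or $\bar\lambda_0$ is one of $A^*$ on $\ell^q(\Z)$ (with $p^{-1}+q^{-1}=1$). Fix $\eps''\in(\eps,\eps')$. In either case I would first construct a \emph{compactly supported} $y$ with $\|(A-\lambda_0 I)y\|_p<\eps''\|y\|_p$ (respectively, the analogous inequality for $A^*$ in $\ell^q$). Once $y$ has support in an interval $\{L,\ldots,R\}\subset\Z$, tridiagonality of $A$ ensures, for any $\ell\le L-1$ and $m\ge R+1$, that $Ay$ vanishes outside $\{\ell,\ldots,m\}$ while $(Ay)_i=(A_{\ell,m}\tilde y)_i$ for $i\in\{\ell,\ldots,m\}$, where $\tilde y$ is $y$ viewed as a vector in $\C^{m+1-\ell}$; therefore $\|\tilde y\|_p=\|y\|_p$ and $\|(A_{\ell,m}-\lambda_0 I)\tilde y\|_p=\|(A-\lambda_0 I)y\|_p$, so $\nu(A_{\ell,m}-\lambda_0 I)<\eps''$, i.e.\ $\lambda_0\in\specn_{\eps''}^p A_{\ell,m}$. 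In the dual sub-case the same argument for $A^*$ gives $\nu((A_{\ell,m})^*-\bar\lambda_0 I)<\eps''$ in $q$-norm, and the finite-dimensional identity $\|(B-\lambda I)^{-1}\|_p=\|(B^*-\bar\lambda I)^{-1}\|_q$ again puts $\lambda_0$ in $\specn_{\eps''}^p A_{\ell,m}$.

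To produce the compactly supported $y$, I would multiply the approximate eigenvector $x$ of $A$ (or $A^*$) by a cutoff $\phi\colon\Z\to[0,1]$, so $y=M_\phi x$ and $(A-\lambda_0 I)y=M_\phi(A-\lambda_0 I)x+[A,M_\phi]x$; everything reduces to controlling the commutator. For $p<\infty$, I would take $\phi=\chi_{\{L,\ldots,R\}}$: tridiagonality localises $[A,M_\phi]x$ to the four indices $\{L-1,L,R,R+1\}$, with $\ell^p$-norm bounded by a constant times $(|x_{L-1}|^p+|x_L|^p+|x_R|^p+|x_{R+1}|^p)^{1/p}$, which tends to zero as $|L|,R\to\infty$ because $x\in\ell^p$; simultaneously $\|\chi_{\{L,\ldots,R\}}x\|_p\to\|x\|_p$. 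When $p=\infty$ (or when $q=\infty$ in the dual sub-case), this tail argument fails; instead I would take $\phi$ to be a trapezoidal cutoff of height one on $[j_0-N,j_0+N]$, vanishing outside $[j_0-2N,j_0+2N]$, linear with slopes $\pm 1/N$ in between, where $j_0$ is chosen so that $|x_{j_0}|>\|x\|_\infty-\delta$. Then $\|y\|_\infty\ge|x_{j_0}|>\|x\|_\infty-\delta$, while tridiagonality bounds every entry of $[A,M_\phi]x$ by $(2M/N)\|x\|_\infty$, which can be forced below $\eps''-\eps$ by taking $N$ large.

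Finally, $N$ is made uniform in $\lambda_0\in\specn_\eps^p A$ by compactness. The pointwise step attaches to each $\lambda_0$ integers $L_{\lambda_0},R_{\lambda_0}$ such that $\nu(A_{\ell,m}-\lambda_0 I)<\eps''$ whenever $\ell\le L_{\lambda_0}-1$ and $m\ge R_{\lambda_0}+1$; the Lipschitz estimate (\ref{eq:lowernorm}) then upgrades this to $\nu(A_{\ell,m}-\lambda I)<\eps'$ on the open disc about $\lambda_0$ of radius $\eps'-\eps''$, a radius \emph{independent} of $\lambda_0$. As $\overline{\specn_\eps^p A}\subset\{|\lambda|\le\|A\|_p+\eps\}$ is compact, finitely many such discs cover it, and taking $N$ to be the maximum over these finitely many $\lambda_0$ of $\max(1-L_{\lambda_0},R_{\lambda_0}+1)$ delivers (\ref{eq:inc1}). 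The semi-infinite inclusion (\ref{eq:inc2}) follows identically, using only one-sided truncation---the fixed left boundary $j=1$ of $A_+$ contributes no commutator. The main obstacle is the $p=\infty$ (and dual $q=\infty$) case, where sharp truncation is unavailable and the smooth cutoff must be tuned so that $\|M_\phi x\|_\infty$ stays close to $\|x\|_\infty$ while the commutator $[A,M_\phi]x$ simultaneously has $\ell^\infty$-norm $O(1/N)$.
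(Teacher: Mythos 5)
Your proposal is correct and follows essentially the same route as the paper's proof: reduce via Theorem \ref{thm_pse}(iv) to approximate eigenvectors of $A$ or $A^*$, truncate them (sharply for $p<\infty$, via a Lipschitz cut-off with $O(1/N)$ commutator for $p=\infty$), and then make $N$ uniform over $\overline{\specn_\eps^p A}$ by a finite-cover compactness argument. The only cosmetic differences are your trapezoidal cut-off centred at a near-maximiser in place of the paper's triangular weight $\omega_N$, and your use of the Lipschitz estimate (\ref{eq:lowernorm}) where the paper invokes the inclusion (\ref{eq_int4}).
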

\begin{proof} We will prove (\ref{eq:inc1}). The proof of (\ref{eq:inc2}) is similar.


As a first step we will show that, given some $\eps>0$ and $p\in [1,\infty]$, for  every  $\lambda \in \specn_\eps^p A$ there exists $N\in\N$ (depending on $\lambda$) such that $\lambda \in \specn_\eps^p A_{\ell,m}$ if $\ell\leq -N$ and $m\geq N$. We then combine this result with a compactness argument to obtain the proof of the theorem.

So suppose that $\eps>0$, $p\in [1,\infty]$, and that $\lambda \in \specn_\eps^p A$. Then, by Theorem \ref{thm_pse}(iv), either $\lambda\in \sppten^p A$ or $\bar \lambda \in \sppten^q A^*$, where $p^{-1}+q^{-1}=1$.

Suppose first that $\lambda\in \sppten^p A$, i.e. that there exists $x\in \ell^p(\Z)$ with $\|x\|_p=1$ and $\tilde \eps :=\|y\|_p < \eps$, where $y:=(A-\lambda I)x$. In the case $p<\infty$, let $\widetilde x := (x_\ell,...,x_m)^T$ and $\widetilde y := (A_{\ell,m}-\lambda I_{m+1-\ell})\widetilde x$, so that $\widetilde y_k = y_k$, $k= \ell+1,...,m-1$. Since $|x_k|\to 0$ as $|k|\to\infty$, it is easy to see that we can, given $\delta>0$, choose $N$ such that $\|\widetilde y\|_p<\tilde \eps+\delta$ and $\|\widetilde x\|_p > 1 - \delta$ whenever $\ell \leq -N$ and $m\geq N$. But this implies that $\lambda\in \specn_\eps^p A_{\ell,m}$ if $N$ is large enough and  $\ell \leq -N$ and $m\geq N$. In the case $p=\infty$ we have to modify this argument slightly. Given $\ell \leq -N$ and $m \geq N$ put $\widetilde x = (\widetilde x_\ell, ...\widetilde x_m)^T$ with $\widetilde x_k := \omega_N(k) x_k$, $k=\ell,...,m$, and $\omega_N(k) := \max(0,1-|k|/(N-1))$, $k\in\Z$, and let $\widetilde y := (A_{\ell,m}-\lambda I_{m+1-\ell})\widetilde x$. Then, for $i= \ell,...,m$,
\begin{eqnarray*}
|\widetilde y_i| &= &\big|a_{i,j-1} \widetilde x_{j-1} + a_{ij} \widetilde x_j + a_{i,j+1} \widetilde x_{j+1}\big|\\
 &= &\big|\omega_N(j) y_j + a_{i,j-1} (\omega_N(j-1)-\omega_N(j))x_{j-1} + a_{i,j+1} (\omega_N(j+1)-\omega_N(j)) x_{j+1}\big|\\
 & \leq & |y_i| + 2M \|x\|_\infty/(N-1) < \eps + 2M/(N-1),
\end{eqnarray*}
since $|\omega_N(j)-\omega_N(j+1)| \leq (N-1)^{-1}$, for $k\in \Z$. Since also, for each $k\in \ell,...,m$, $\widetilde x_k \to x_k$ as $N\to \infty$, it is clear that, for every $\delta>0$, if $N$ is chosen large enough, then $\|\widetilde x\|_\infty \geq 1-\delta$, and also $\|\widetilde y\|_\infty < \tilde \eps + \delta$. But this implies that $\lambda\in \specn_\eps^p A_{\ell,m}$ if $N$ is large enough and  $\ell \leq -N$ and $m\geq N$.

If $\bar \lambda\in \sppten^q A^*$ then essentially the identical argument shows that $\bar \lambda\in \specn_\eps^q A^*_{\ell,m}$. But this implies that $\lambda \in \specn_\eps^p A_{\ell,m}$ \cite[Section 4]{TrefEmbBook}. This completes the proof of the first step.

To finish the proof of the theorem we argue as follows. Given $\eps^\prime > \eps>0$ and $p\in [1,\infty]$, let $\eta := (\eps^\prime - \eps)/2$, and $\eps^* = \eps+\eta$.  Let $S := \overline{\specn_\eps^p A}$, and let $O := \{\lambda+\eta \D: \lambda \in S\}$. Then $O$ is an open cover of the compact set $S$, and so has a finite subcover, i.e. there exists a finite set $\Lambda\subset \overline{\specn_\eps^p A}$ with $S\subset \bigcup_{\lambda\in \Lambda} (\lambda+\eta\D)= \eta\D + \Lambda$.
Now $\Lambda \subset \overline{\specn_\eps^p A} \subset \specn_{\eps^{*}}^p A$. Applying the result shown in the first step, we see that we can choose $N$ so that, for $\ell\leq -N$ and $m\geq N$, $\Lambda \subset \specn_{\eps^*}^p A_{\ell,m}$. Thus $\specn_\eps^p A \subset S\subset \eta\D+\Lambda \subset \eta\D + \specn_{\eps^*}^p A_{\ell,m}\subset \specn_{\eps^\prime}^p A_{\ell,m}$, by (\ref{eq_int4}).
\end{proof}

To apply this result, for $\ell,m\in\Z$ with $\ell\leq m$, let $A^b_{\ell,m}$ denote $A_{\ell,m}$, the matrix of order $m+1-\ell$ as defined in the above theorem, in the case that $A=A^b$. So, in particular, $A^b_{1,n} = A^b_n$ for $n\in \N$.

\begin{corollary} \label{cor_pse_conv}  If $b\in \{\pm 1\}^\N$ is pseudo-ergodic then, for every $\eps>0$ and $p\in [1,\infty]$,
$$
\specn_\eps^p A^b_n = \specn_\eps^p A^b_{1,n} \nearrow \specn_\eps^p A_+^b = \Sigma_\eps^p, \quad \mbox{ as } n\to\infty.
$$
If $b\in \{\pm 1\}^\Z$ is pseudo-ergodic then, for every $\eps>0$ and $p\in [1,\infty]$,
$$
\specn_\eps^p A^b_{\ell,m} \nearrow \specn_\eps^p A^b = \Sigma_\eps^p, \quad \mbox{ as } \ell\to-\infty \mbox{ and } m\to\infty.
$$
\end{corollary}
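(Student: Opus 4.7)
The plan is to combine Corollary \ref{cor:union_sp2} and Theorem \ref{thm_main} (which together supply the outer inclusion $T_n \subset \Sigma_\eps^p$, where $T_n$ denotes the relevant finite-section pseudospectrum) with the pointwise convergence already established inside the proof of Theorem \ref{thm_pseudo_converg}, and then to promote that pointwise statement to Hausdorff convergence by the same finite-cover argument used in the second step of that proof.

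First I would verify the monotone inclusion $T_n \subset \Sigma_\eps^p$. Each finite matrix $A^b_{\ell,m}$ has the form $A^c_{m-\ell+1}$ for $c = (b_\ell,\dots,b_{m-1}) \in \{\pm 1\}^{m-\ell}$, so $\specn_\eps^p A^b_{\ell,m} \subset \sigma_{m-\ell+1,\eps}^p \subset \Sigma_\eps^p$ by Corollary \ref{cor:union_sp2}, while Theorem \ref{thm_main} identifies $\Sigma_\eps^p$ with $\specn_\eps^p A^b_+$ in the semi-infinite case and with $\specn_\eps^p A^b$ in the bi-infinite case. Next I would observe that the first step of the proof of Theorem \ref{thm_pseudo_converg} actually establishes a stronger pointwise statement than the theorem itself formulates: for each fixed $\lambda \in \specn_\eps^p A$ there exists $N(\lambda) \in \N$ such that $\lambda \in \specn_\eps^p A_{\ell,m}$ whenever $\ell \leq -N(\lambda)$ and $m \geq N(\lambda)$, and the same weighted-truncation argument, applied to an approximate eigenvector in $\ell^p(\N)$ of $A_+$, gives the analogous pointwise statement for $A_{1,m}$. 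Applied to $A = A^b$ (resp.\ $A_+ = A^b_+$), this shows that every $\lambda \in \Sigma_\eps^p$ lies in $T_n$ for all sufficiently large $n$.

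To finish, I would upgrade this pointwise convergence to $\dist(\Sigma_\eps^p, T_n) \to 0$ by transcribing the compactness argument in the second step of the proof of Theorem \ref{thm_pseudo_converg}. Given $\eta > 0$, since $\Sigma_\eps^p$ is open and its closure is compact (pseudospectra are bounded), one can choose finitely many $\lambda_1, \dots, \lambda_k \in \Sigma_\eps^p$ with $\overline{\Sigma_\eps^p} \subset \bigcup_{j=1}^{k}(\lambda_j + \eta\D)$; setting $N := \max_j N(\lambda_j)$, the previous paragraph places each $\lambda_j$ in $T_n$ for $n \geq N$, and hence every point of $\Sigma_\eps^p \subset \overline{\Sigma_\eps^p}$ lies within $\eta$ of $T_n$. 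Combined with the monotone inclusion this yields $T_n \nearrow \Sigma_\eps^p$ in both the semi-infinite and the bi-infinite settings. I do not anticipate any serious obstacle: the only delicate point is that one genuinely needs the pointwise inclusion $\lambda \in \specn_\eps^p A^b_n$ buried inside the proof of Theorem \ref{thm_pseudo_converg}, since the formal statement of that theorem only delivers $\lambda \in \specn_{\eps'}^p A^b_n$ with $\eps' > \eps$, which is too weak to close the gap between $\specn_\eps^p A^b_n$ and $\Sigma_\eps^p$ in Hausdorff distance.
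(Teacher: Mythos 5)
Your argument is correct, and the outer inclusion (via Corollary \ref{cor:union_sp2} and Theorem \ref{thm_main}) is exactly as in the paper; where you diverge is in how the gap between $\eps$ and $\eps'$ is closed. The paper's proof uses Theorem \ref{thm_pseudo_converg} purely as stated: for any $\eps'\in(0,\eps)$ it gives $N$ with $\specn_{\eps'}^p A^b \subset \specn_{\eps}^p A^b_{\ell,m} \subset \specn_{\eps}^p A^b$ for $\ell\le -N$, $m\ge N$, and then lets $\eps'\to\eps^-$, invoking the continuity statement (\ref{eq_pseconv}) that $\specn_{\eps'}^p A^b \nearrow \specn_{\eps}^p A^b$, to drive the Hausdorff distance to zero. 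You instead reopen the proof of Theorem \ref{thm_pseudo_converg}, extract the pointwise inclusion at the \emph{same} level $\eps$ (which the truncation argument does deliver, since an $\eps$-pseudoeigenvector satisfies the strict inequality $\|y\|_p<\eps$ and so leaves room for the truncation error), and rerun the finite-cover compactness step with centres chosen in the open set $\Sigma_\eps^p$. Both routes are sound; yours avoids citing (\ref{eq_pseconv}) at the cost of re-deriving part of Theorem \ref{thm_pseudo_converg} rather than quoting it. Your closing claim --- that the stated form of that theorem, yielding only $\lambda\in\specn_{\eps'}^p A^b_{\ell,m}$ with $\eps'>\eps$, is ``too weak to close the gap'' --- is, however, mistaken: the $\eps'\to\eps^-$ sandwich closes exactly that gap, which is precisely how the paper proceeds.
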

\begin{proof}
We will prove the second of these statements. The proof of the first is similar.
From Corollary \ref{cor:union_sp2} and Theorem \ref{thm_pseudo_converg}, given any $\eps^\prime\in (0,\eps)$ there exists $N\in \N$ such that
\begin{equation} \label{eq:inc8}
\specn_{\eps^\prime}^p A \subset \specn_{\eps}^p A_{\ell,m} \subset \specn_{\eps}^p A, \quad \mbox{ for } \ell \leq -N \mbox{ and } m \geq N.
\end{equation}
Since, from (\ref{eq_pseconv}), $\specn_{\eps^\prime}^p A \nearrow \specn_{\eps}^p A$ as $\eps^\prime\to \eps^-$, it follows that  $\specn_\eps^p A^b_{\ell,m} \nearrow \specn_\eps^p A^b$ (which is equal to $\Sigma_\eps^p$ by Theorem \ref{thm_main}), as $\ell\to-\infty$ and $m\to\infty$.
\end{proof}

A similar result holds for the convergence of the numerical range, as an instance of the general result Theorem \ref{thm:nrconv}. Note that while convergence of the pseudospectra needs that $b$ is pseudo-ergodic, to ensure that the matrix pseudospectra are contained in the operator pseudospectra, the corresponding inclusion (\ref{eq:numr}) for numerical ranges holds for {\em any} bounded linear operator, so that we need no constraint on $b$. The following is thus an immediate corollary of Theorem \ref{thm:nrconv} and Lemma \ref{lem_nr}.
\begin{corollary} \label{cor_numconv} If $b\in \{\pm 1\}^\N$ then
$$
W(A^b_n) = W(A^b_{1,n}) \nearrow W(A^b_+), \quad \mbox{ as } n\to\infty,
$$
with $W(A^b_+)=\Delta$ if $b$ is pseudo-ergodic.
If $b\in \{\pm 1\}^\Z$  then
$$
W(A^b_{\ell,m}) \nearrow W(A^b), \quad \mbox{ as } \ell\to-\infty \mbox{ and } m\to\infty.
$$
with $W(A^b)=\Delta$ if $b$ is pseudo-ergodic.
\end{corollary}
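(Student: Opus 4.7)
\begin{proofof}{Corollary \ref{cor_numconv}}
The plan is to apply Theorem \ref{thm:nrconv} twice, with a careful choice of orthogonal projections, and then invoke Lemma \ref{lem_nr} to identify the limit set $\Delta$ in the pseudo-ergodic case.

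First I would treat the semi-infinite case. Let $X=\ell^2(\N)$, and for $n\in\N$ let $P_n:X\to X$ be the orthogonal projection onto the closed subspace $X_n$ spanned by the first $n$ standard basis vectors $e_1,\dots,e_n$. Since every $x\in\ell^2(\N)$ satisfies $\|x-P_nx\|_2\to 0$, the sequence $(P_n)$ converges strongly to $I$. A direct computation on basis elements shows that the compression $P_n A^b_+|_{X_n}$ is, under the natural identification of $X_n$ with $\C^n$, the matrix $A^b_n=A^b_{1,n}$. Applying Theorem \ref{thm:nrconv} with $B=A^b_+$ then yields $W(A^b_n)\nearrow W(A^b_+)$ as $n\to\infty$. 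The identification $W(A^b_+)=\Delta$ when $b$ is pseudo-ergodic is exactly Lemma \ref{lem_nr}.

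For the bi-infinite case I would proceed analogously on $X=\ell^2(\Z)$. The mild difference is that the corollary is phrased with two indices $\ell\to-\infty$ and $m\to\infty$, whereas Theorem \ref{thm:nrconv} uses a single index $n\in\N$. This is harmless: given any sequences $\ell_k\to-\infty$ and $m_k\to\infty$, let $Q_k$ denote orthogonal projection of $\ell^2(\Z)$ onto the span of $\{e_j:\ell_k\le j\le m_k\}$. Then $Q_k\to I$ strongly, and the compression $Q_k A^b|_{Q_k(X)}$ is identified with $A^b_{\ell_k,m_k}$. Theorem \ref{thm:nrconv} applied to $B=A^b$ with this sequence of projections gives $W(A^b_{\ell_k,m_k})\nearrow W(A^b)$. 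Since the sequences were arbitrary, this is exactly the asserted convergence as $\ell\to-\infty$ and $m\to\infty$. Finally, $W(A^b)=\Delta$ when $b$ is pseudo-ergodic is again from Lemma \ref{lem_nr}.

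There is essentially no obstacle: the only point requiring a moment's care is the verification that the compressions $P_n A^b_+|_{X_n}$ and $Q_k A^b|_{Q_k(X)}$ coincide with the finite truncations $A^b_n$ and $A^b_{\ell_k,m_k}$ as defined before Theorem \ref{thm_pseudo_converg}; this is immediate from the definition of $P_n$, $Q_k$ and the tridiagonal structure of $A^b_+$ and $A^b$.
\end{proofof}
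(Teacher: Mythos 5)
Your argument is correct and is precisely the route the paper takes: the corollary is stated there as an immediate consequence of Theorem \ref{thm:nrconv} and Lemma \ref{lem_nr}, and your write-up simply supplies the (routine) identification of the compressions $P_nA^b_+|_{X_n}$ and $Q_kA^b|_{Q_k(X)}$ with the finite sections, together with the reduction of the two-index limit to arbitrary sequences. Nothing is missing.
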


\subsection{Quantitative convergent approximations to the spectrum and pseudospectrum} \label{sec:final}

In this section we present numerical algorithms for approximating $\Sigma$ and $\Sigma_\eps^2$ which are, respectively, from Theorem \ref{thm_main}, the spectrum and the $\ell^2$ $\eps$-pseusdospectrum of both $A^b$ and $A^b_+$ in the case when $b$ is pseudo-ergodic.

The previous subsection already provides potential methods for computing these sets. We have that, if $b\in\{\pm1\}^n$ is pseudo-ergodic, then
\begin{equation} \label{eq_pconv}
\Sigma_\eps^2 = \lim_{n\to\infty} \specn_\eps^2 A^b_n.
\end{equation}
This then implies, by (\ref{eq_pseconv}), that
\begin{equation} \label{eq_sconv}
\Sigma = \lim_{\eps\to 0} \, \lim_{n\to\infty} \specn_\eps^2 A^b_n.
\end{equation}
In principle, these equations can be used as the basis of algorithms for computing $\Sigma_\eps^p$ and $\Sigma$. In particular, to approximate $\Sigma_\eps^2$ one uses the sequence of sets $\specn_\eps^2 A^b_n$, $n=1,2,...$, which can be computed as described in Section \ref{sec:pseudo}. The difficulty with this scheme is that one has no idea of the rate of convergence of $\specn_\eps^2 A^b_n$ to $\Sigma$. Indeed it is clear that it can be arbitrarily slow: to see this consider that if $c\in \{\pm 1\}^\N$ is pseudo-ergodic, then so is $b\in \{\pm 1\}^\N$ if $b_m=c_m$ for all sufficiently large $m$. But this means that it can hold that $b$ is pseudo-ergodic and that $b_m=1$, for $1\leq m\leq N$, with $N$ arbitrarily large. If this is the case then  $A^b_n$ is self-adjoint and thus, and by Lemma \ref{lem_nr}, $\spec A^b_n \subset (-2,2)$ and $\specn_\eps^2 A^b_n = \spec A^b_n + \eps \D \subset (-2,2)+\eps \D$, for $n\leq N$. So if, e.g., $N=10^9$ then, while ultimately $\specn_\eps^2 A^b_n \to \Sigma_\eps^2$, there is no early sign of this.

The situation with (\ref{eq_sconv}) is rather worse. This equation implies that there exists some sequence of positive reals $\eps_n\to 0$ for which it holds that
$$
\specn_{\eps_n}^2 A^b_n \to \Sigma,
$$
but provides neither a recipe for choosing the $\eps_n$ nor any guarantee of the rate of convergence.

The source of the difficulty regarding the rate of convergence can be traced back to Theorem \ref{thm_pseudo_converg} and its proof, this theorem  a key ingredient in the proof of Corollary \ref{cor_pse_conv} and so of (\ref{eq_pconv}). This theorem guarantees that, for every $\eps^\prime>\eps>0$, $\Sigma_\eps^2 = \specn_\eps^2 A^b_+ \subset \specn_{\eps^\prime}^2 A^b_n$ for all $n$ sufficiently large, but gives no idea of how large $n$ should be. And indeed we have argued above that there is no upper bound on how large $n$ may need to be for this equation to hold for a given pseudo-ergodic $b$.

This difficulty has been resolved in recent work by the authors \cite{CW.Heng.ML:UpperBounds}, who quantify, for general tridiagonal matrices, by a sharpened version of the arguments of Theorem \ref{thm_pseudo_converg}, adapted particularly to the case $p=2$, exactly how $\eps^\prime$ should depend on $n$ in (\ref{eq:inc2}), but at the expense of replacing in this equation the pseudospectrum of a single $n\times n$ submatrix by the union of the pseudospectra of {\em all} possible $n\times n$ principal submatrices. The results in \cite{CW.Heng.ML:UpperBounds} are much more general, but we will restrict the exposition here to how these results apply to the bi-infinite matrix $A^b$ with $b\in \{\pm 1\}^Z$. Using the notation of Corollary \ref{cor_pse_conv}, the result shown in \cite{CW.Heng.ML:UpperBounds} (or see \cite[Corollary 3.7]{HengPhD}) is the following when applied to $A^b$:

\begin{theorem} \label{thm_upper} For $b\in \{\pm 1\}^\Z$, $\eps>0$, and $n\in\N$,
$$
\specn_\eps^2 A^b \subset \bigcup_{\ell\in\Z} \specn_{\eps+\eps_n}^2 A_{\ell,\ell+n-1}^b,
$$
where $\eps_{n} = 4\sin \theta_n \leq 2\pi/(n+2)$, with $\theta_n$ the unique solution in the interval $\displaystyle{\left(\frac{\pi}{2(n+3)},\frac{\pi}{2(n+2)}\right]}$ of the equation
$
2\cos\left((n+1)\theta\right)\ =\ \cos\left((n-1)\theta\right)$. Further,
$$
\spec A^b \subset \overline{\bigcup_{\ell\in\Z} \specn_{\eps_n}^2 A_{\ell,\ell+n-1}^b} = \bigcup_{\ell\in\Z} {\mathrm{Spec}}_{\eps_n}^2 A_{\ell,\ell+n-1}^b.
$$
\end{theorem}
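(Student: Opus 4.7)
The plan is a windowing argument: each $\ell^2$-approximate eigenvector $x$ of $A^b$ will be shifted and truncated into an approximate eigenvector of some principal submatrix $A^b_{\ell, \ell+n-1}$, and the residual bound will be optimized over the window shape to yield the sharp constant $\eps_n$. First I would reduce to the case of an $\ell^2$-pseudoeigenvector of $A^b$ itself: by Theorem \ref{thm_pse}(iv), $\lambda \in \specn_\eps^2 A^b$ gives either an $\eps$-pseudoeigenvector of $A^b$ or of $(A^b)^*$, but the latter is similar to $A^b$ via a signed diagonal similarity (Lemma \ref{lem_sim}) and has the same tridiagonal $\pm 1$ structure, so the two cases are treated in parallel and yield the same finite-matrix bound. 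Thus there is $x \in \ell^2(\Z)$, $\|x\|_2 = 1$, with $\tilde\eps := \|(A^b - \lambda I)x\|_2 < \eps$; the case $\lambda \in \spec A^b$ will be handled by a $\delta \to 0^+$ limit at the end.

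Next, I would fix a window $v = (v_1, \ldots, v_n)$ (to be optimized later), define $\tilde x^{(\ell)}_k := v_k x_{\ell + k - 1}$ for $k = 1, \ldots, n$, and verify by direct computation---with the convention $v_0 = v_{n+1} := 0$---the identity
\begin{equation*}
(A^b_{\ell, \ell+n-1} - \lambda I_n)\tilde x^{(\ell)} = f^{(\ell)} + E^{(\ell)},
\end{equation*}
with windowed residual $f^{(\ell)}_k = v_k((A^b-\lambda I)x)_{\ell+k-1}$ and commutator error $E^{(\ell)}_k = b_{\ell+k-2}(v_{k-1}-v_k)x_{\ell+k-2} + (v_{k+1}-v_k)x_{\ell+k}$. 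Summing over $\ell \in \Z$, translation invariance gives $\sum_\ell \|\tilde x^{(\ell)}\|_2^2 = \|v\|_2^2$ and $\sum_\ell \|f^{(\ell)}\|_2^2 = \tilde\eps^2 \|v\|_2^2$. The crucial step is to split $E^{(\ell)} = E^{(\ell,-)} + E^{(\ell,+)}$ into its two shift components; using $|b_m| = 1$, $\sum_\ell \|E^{(\ell,\pm)}\|_2^2$ equals $\sum_{k=1}^n d_k^2$ or $\sum_{k=2}^{n+1} d_k^2$ respectively, with $d_k := v_k - v_{k-1}$. Two applications of Minkowski's inequality on $\ell^2(\Z)$, together with the elementary bound $\min_\ell r_\ell \leq \|\{r_\ell \tau_\ell\}\|_{\ell^2(\Z)}/\|\{\tau_\ell\}\|_{\ell^2(\Z)}$ for non-negative sequences, then produce some $\ell \in \Z$ with
\begin{equation*}
\frac{\|(A^b_{\ell,\ell+n-1}-\lambda I_n)\tilde x^{(\ell)}\|_2}{\|\tilde x^{(\ell)}\|_2} \leq \tilde\eps + R(v), \qquad R(v) := \frac{\sqrt{\sum_{k=1}^n d_k^2} + \sqrt{\sum_{k=2}^{n+1} d_k^2}}{\|v\|_2},
\end{equation*}
so $\lambda \in \specn_{\tilde\eps + R(v)}^2 A^b_{\ell,\ell+n-1}$.

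It remains to minimize $R(v)$; the infimum will equal $\eps_n$. A symmetry argument ($v_k = v_{n+1-k}$ at the extremum) equalizes the two square-root terms, reducing the problem to a constrained Rayleigh quotient whose Euler--Lagrange system has the usual three-term interior recurrence but modified boundary conditions (generated by differentiating $\sqrt{a}+\sqrt{b'}$, giving the coefficient $6$ rather than the Dirichlet $4$). The general interior solution is $v_k = A \sin(k\theta_{\mathrm{int}}) + B\cos(k\theta_{\mathrm{int}})$ with $\mu = 2 - 2\cos\theta_{\mathrm{int}}$; eliminating $A, B$ from the two boundary compatibility relations, then applying product-to-sum identities under the substitution $\theta_n := \theta_{\mathrm{int}}/2$, factors the resulting compatibility equation as $(2\cos\alpha - \cos\beta)(2\sin\alpha - \sin\beta) = 0$ with $\alpha := (n+1)\theta_n$ and $\beta := (n-1)\theta_n$. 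The smallest positive root comes from the first factor, yielding the stated equation $2\cos((n+1)\theta_n) = \cos((n-1)\theta_n)$; a sign-change/monotonicity analysis of $\theta \mapsto 2\cos((n+1)\theta) - \cos((n-1)\theta)$ at the two endpoints then localizes this root in $(\pi/(2(n+3)), \pi/(2(n+2))]$ (and gives uniqueness there), and $R(v_{\mathrm{opt}}) = 4\sin\theta_n = \eps_n$.

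For the second (spectrum) statement, $\spec A^b \subset \specn_\delta^2 A^b$ for every $\delta > 0$, so the first inclusion with $\eps = \delta$ gives $\spec A^b \subset \bigcup_\ell \specn_{\delta + \eps_n}^2 A^b_{\ell, \ell+n-1}$ for each $\delta$. Since each $b_j \in \{\pm 1\}$, there are at most $2^{n-1}$ distinct matrices among $\{A^b_{\ell,\ell+n-1}\}_{\ell \in \Z}$, so the union is effectively finite; intersection over $\delta > 0$ commutes with it, yielding $\spec A^b \subset \bigcup_\ell \mathrm{Spec}_{\eps_n}^2 A^b_{\ell, \ell+n-1}$, and the same finiteness combined with Theorem \ref{thm_resl_norm} applied on $\C^n$ (which gives $\overline{\specn_\eps^2 M} = \mathrm{Spec}_\eps^2 M$ for any finite matrix $M$) yields the displayed closure identity. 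The principal obstacle is the window optimization in the third paragraph: the \emph{sharp} constant $4\sin\theta_n$ requires both (a) splitting $E^{(\ell)}$ into its two shifts \emph{before} applying Minkowski---bounding $\|E^{(\ell)}\|_2$ by a single Minkowski step would introduce a spurious factor of $\sqrt 2$---and (b) the explicit trigonometric factorization of the non-standard Euler--Lagrange compatibility equation together with the careful sign/monotonicity argument isolating its smallest positive root in the narrow interval $(\pi/(2(n+3)), \pi/(2(n+2))]$.
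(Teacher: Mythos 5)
The paper does not actually prove this theorem: it is imported verbatim from the reference \cite{CW.Heng.ML:UpperBounds} (listed as ``in preparation'') and from \cite[Corollary 3.7]{HengPhD}, with only the hint that the proof is ``a sharpened version of the arguments of Theorem \ref{thm_pseudo_converg}, adapted particularly to the case $p=2$''. Your proposal is therefore not comparable to an in-paper proof, but it is a credible self-contained reconstruction, and it is exactly the kind of sharpening the paper alludes to: where Theorem \ref{thm_pseudo_converg} truncates an approximate eigenvector with a crude (sharp or triangular) cutoff and takes $n$ ``sufficiently large'', you replace the cutoff by an optimised window $v$ and replace ``sufficiently large'' by an $\ell^2$-averaging/pigeonhole over all shifts $\ell$, which is what produces a bound valid for every fixed $n$. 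I have checked the key computations and they are right: the commutator identity for $E^{(\ell)}$, the exact identities $\sum_\ell\|\tilde x^{(\ell)}\|_2^2=\|v\|_2^2$ and $\sum_\ell\|E^{(\ell,\pm)}\|_2^2=\sum d_k^2$ (this is where $|b_m|=1$ enters), the Euler--Lagrange system with ghost boundary condition $v_0=\tfrac12 v_1$, $v_{n+1}=\tfrac12 v_n$, whose symmetric solution $v_j=\cos((2j-n-1)\theta)$ forces $2\cos((n+1)\theta)=\cos((n-1)\theta)$, the Rayleigh-quotient identity giving $R(v)=4\sin\theta_n$, the sign analysis locating $\theta_n$ in $(\pi/(2(n+3)),\pi/(2(n+2))]$, and consistency with the paper's values $\eps_1=2$, $\eps_2=\sqrt2$ from (\ref{eq_smalleps}). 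Two points deserve more care in a write-up. First, you do not need (and have not proved) that the \emph{global} minimiser of $R$ is symmetric: for the stated inclusion it suffices that the explicitly constructed symmetric critical window achieves $R(v)=4\sin\theta_n$, so you should phrase the optimisation as exhibiting a window with $R(v)\le 4\sin\theta_n$ rather than as computing $\inf R$. Second, in the adjoint branch of the reduction, the windowing gives an approximate null vector of $(A^b_{\ell,\ell+n-1})^*-\bar\lambda I$; to conclude $\lambda\in\specn^2_{\eps+\eps_n}A^b_{\ell,\ell+n-1}$ you should invoke $s_{\mathrm{min}}(M^*-\bar\lambda I)=s_{\mathrm{min}}(M-\lambda I)$ for finite matrices (or the conjugation symmetry of the real matrices $A^c_n$), rather than only the similarity of $(A^b)^*$ to $A^b$, which relates $\bar\lambda$ to $\bar\lambda$, not to $\lambda$. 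Also note the pigeonhole yields a non-strict inequality $\le\tilde\eps+R(v)$; the strict membership in the open pseudospectrum then comes from the slack $\tilde\eps<\eps$, which is worth saying explicitly. None of these affects the correctness of the argument.
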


An important point is that the unions of pseudospectra over $\ell\in\Z$ in the above equations reduce to finite unions, because there are only $2^{n-1}$ distinct $n\times n$ matrices $A^c_n$ with $c\in \{\pm 1\}^{n-1}$. In the notation introduced in Corollary \ref{cor:union_sp2}, it must hold that
$\bigcup_{\ell\in\Z} \specn_{\eta}^2 A_{\ell,\ell+n-1}^b \subset \sigma_{n,\eta}^2$, for every $\eta>0$. For small values of $n$, $\eps_n$ in the above theorem can be calculated explicitly, in particular
\begin{equation} \label{eq_smalleps}
\eps_1 = 2 \mbox{ and } \eps_2 = \sqrt{2}.
\end{equation}

\begin{example} As a first example of application of the above theorem, consider the case when $b_m=1$ for each $m$. Then $A_{\ell,\ell+n-1}^b = A_{1,n}^b = A^b_n$ for each $\ell$. Further, this matrix is self-adjoint, so that $\specn_\eta^2 A^b_n = \spec A_n^b + \eta \D$, for every $\eta>0$. Thus the statements of the theorem reduce to
\begin{equation} \label{eq:ex}
\spec A^b \subset \spec A_n^b + \eps_n\overline{\D} \; \mbox{ and } \; \specn_\eps^2 A^b \subset \spec A_n^b + (\eps+\eps_n)\D, \; \eps>0.
\end{equation}
In this simple case we can compute the above sets explicitly, to check that the above inclusions hold, finding that $\spec A^b = [-2,2]$, $\specn_\eps^2 A^b = [-2,2]+\eps\D$, and $\spec A_n^b = \left\{2\cos\frac{j\pi}{n+1}:j=1,...,n\right\}$. Elementary calculations show that the inclusions (\ref{eq:ex}) do hold in this case, in fact one can calculate (see \cite[Section 3.2.2]{HengPhD} for details), if $\eps_n$ were replaced with $\eps_n^*\leq\eps_n$ in the above inclusions, the smallest value of $\eps_n^*$ for which the inclusions would still hold. This is $\eps_1^*=2$, $\eps_n^* = 2\sin(\pi/(2(n+1)))$ if $n$ is even (in particular $\eps_2^* = 1$), and $\eps_n^* = \sin(\pi/(n+1))$ if $n\geq 3$ is odd. Thus $\eps_n/\eps_n^*=1$ for $n=1$ (the bound (\ref{eq:ex}) is sharp for $n=1$) and $\eps_n/\eps_n^*\to 2$ as $n\to\infty$.
\end{example}

The main example of interest to us here is the case where $b\in \{\pm 1\}^\Z$ is pseudo-ergodic. Recall from Theorem \ref{thm_main} that $\spec A^b = \Sigma$ and $\specn_\eps^2 A^b = \Sigma_\eps^2$ in that case. Combining Theorem \ref{thm_upper} with Theorem \ref{prop:spec_An}, Corollary \ref{cor:union_sp2}, (\ref{eq:thing}), (\ref{eq_pseconv}) and (\ref{eq_pseconv2}), we obtain the following result.

\begin{theorem} \label{thm_final} For $\eps>0$ and $n\in\N$,
\begin{equation} \label{eq:incf1}
\sigma_n \subset \Sigma \subset \overline{\sigma_{n,\eps_n}^2}  \subset \overline{\Sigma_{\eps_n}^2} \mbox{ and } \Sigma_\eps^2 \subset \sigma_{n,\eps+\eps_n}^2 \subset \Sigma_{\eps+\eps_n}^2,
\end{equation}
where $\eps_n$ is defined as in Theorem \ref{thm_upper}. Further,
\begin{equation} \label{eq:incf2}
\sigma_n + \eps \D \subset \Sigma_\eps^2, \mbox{ for } \eps>0, \;\mbox{ and }\; \Sigma_{\eps-\eps_n}^2 \subset \sigma_{n,\eps}^2 \subset \Sigma_{\eps}^2, \mbox{ for } \eps>\eps_n.
\end{equation}
Moreover, as $n\to\infty$,  $\overline{\sigma_{n,\eps_n}^2}\searrow \Sigma$ and, for $\eps>0$, $\sigma_{n,\eps+\eps_n}^2\searrow \Sigma^2_\eps$ and $\sigma_{n,\eps}^2\nearrow \Sigma^2_\eps$.
\end{theorem}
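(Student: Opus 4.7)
The plan is to deduce all the inclusions in (\ref{eq:incf1}) and (\ref{eq:incf2}) by combining Theorem \ref{thm_upper}, which quantitatively traps $\Sigma$ and $\Sigma_\eps^2$ inside the union of pseudospectra of $n\times n$ principal submatrices of $A^b$ with a controlled slack $\eps_n = O(1/n)$, with Theorem \ref{prop:spec_An} and Corollary \ref{cor:union_sp2}, which supply the reverse-direction inclusions $\sigma_n\subset\Sigma$ and $\sigma_{n,\eta}^2\subset\Sigma_\eta^2$. The three Hausdorff convergences will then follow from these inclusions by a squeeze argument using continuity of the pseudospectrum in its radius parameter.

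I would begin by fixing a pseudo-ergodic $b\in\{\pm 1\}^\Z$, so that Theorem \ref{thm_main} gives $\spec A^b=\Sigma$ and $\specn_\eps^2 A^b=\Sigma_\eps^2$. For every $\ell\in\Z$ the principal submatrix $A^b_{\ell,\ell+n-1}$ equals $A_n^c$ for some $c\in\{\pm 1\}^{n-1}$, and pseudo-ergodicity of $b$ ensures that every such $c$ is realised for some $\ell$, so that
$$
\bigcup_{\ell\in\Z}\specn_\eta^2 A^b_{\ell,\ell+n-1}\ =\ \sigma_{n,\eta}^2 \qquad \mbox{for every } \eta>0.
$$
Under this identification Theorem \ref{thm_upper} reads $\Sigma\subset\overline{\sigma_{n,\eps_n}^2}$ and $\Sigma_\eta^2\subset\sigma_{n,\eta+\eps_n}^2$ for all $\eta>0$. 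Splicing in $\sigma_n\subset\Sigma$ (Theorem \ref{prop:spec_An}) and $\sigma_{n,\eta}^2\subset\Sigma_\eta^2$ (Corollary \ref{cor:union_sp2}), and taking closures where needed, assembles the four-term chain in (\ref{eq:incf1}). For (\ref{eq:incf2}) I would observe that $\sigma_n+\eps\D\subset\Sigma+\eps\D\subset\Sigma_\eps^2$ by $\sigma_n\subset\Sigma$ and (\ref{eq:thing}), that $\Sigma_{\eps-\eps_n}^2\subset\sigma_{n,\eps}^2$ for $\eps>\eps_n$ is precisely Theorem \ref{thm_upper} applied at pseudospectrum level $\eps-\eps_n$, and that $\sigma_{n,\eps}^2\subset\Sigma_\eps^2$ is again Corollary \ref{cor:union_sp2}.

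For the three Hausdorff convergences I would invoke a squeeze. Because $\ell^2(\Z)$ has the strong maximum property (Theorem \ref{thm_resl_norm}), the closed pseudospectrum $\overline{\Sigma_\eta^2}$ depends continuously on $\eta\geq 0$ in Hausdorff distance, with $\overline{\Sigma_\eta^2}\searrow\Sigma$ as $\eta\to 0^+$ and $\Sigma_\eta^2\nearrow\Sigma_\eps^2$ as $\eta\to\eps^-$, by (\ref{eq_pseconv2}) and (\ref{eq_pseconv}). Since $\eps_n\to 0$, the sandwiches
$$
\Sigma\subset\overline{\sigma_{n,\eps_n}^2}\subset\overline{\Sigma_{\eps_n}^2},\qquad \Sigma_\eps^2\subset\sigma_{n,\eps+\eps_n}^2\subset\Sigma_{\eps+\eps_n}^2,\qquad \Sigma_{\eps-\eps_n}^2\subset\sigma_{n,\eps}^2\subset\Sigma_\eps^2
$$
then force $\overline{\sigma_{n,\eps_n}^2}\searrow\Sigma$, $\sigma_{n,\eps+\eps_n}^2\searrow\Sigma_\eps^2$, and $\sigma_{n,\eps}^2\nearrow\Sigma_\eps^2$, respectively, the last one holding once $n$ is large enough that $\eps_n<\eps$. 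The main obstacle has already been absorbed into Theorem \ref{thm_upper}, which supplies the explicit slack $\eps_n$; the remaining work here is essentially bookkeeping, the only subtlety being the careful interplay between open pseudospectra and their closures via the strong maximum property.
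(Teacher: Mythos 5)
Your proposal is correct and follows essentially the same route as the paper, which obtains the theorem precisely by combining Theorem \ref{thm_upper} (read, via pseudo-ergodicity of $b$, as $\Sigma\subset\overline{\sigma^2_{n,\eps_n}}$ and $\Sigma^2_\eta\subset\sigma^2_{n,\eta+\eps_n}$) with Theorem \ref{prop:spec_An}, Corollary \ref{cor:union_sp2}, (\ref{eq:thing}), (\ref{eq_pseconv}) and (\ref{eq_pseconv2}), the convergences then following from the same sandwich argument you give. Your write-up merely makes explicit the details (including the correct caveat that $\eps_n<\eps$ is needed for the last convergence) that the paper leaves to the reader.
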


In most respects this result is superior to Corollary \ref{cor_pse_conv}. It provides both upper and lower bounds for $\Sigma_\eps^2$, moreover these converge to $\Sigma_\eps^2$ as $n\to\infty$ at guaranteed convergence rates (at least as fast as $\Sigma_{\eps-\eps_n}$ and $\Sigma_{\eps+\eps_n}$, respectively). Further, the theorem provides an upper bound which is convergent to $\Sigma$, at least as fast as $\Sigma_{\eps_n}^2$. Of course, that the convergence rates are guaranteed is at a cost: evaluating $\sigma^2_{n,\eta}$ for some $n\in\N$ and $\eta>0$ requires exponentially large computation for $n$ large. Precisely, using the characterisation (\ref{eq:sing}), we see that
\begin{equation} \label{eq:sn}
\sigma^2_{n,\eta} = \{\lambda \in \C: S_n(\lambda) < \eta\},
\end{equation}
where
\begin{equation} \label{eq:Sndef}
S_n(\lambda) := \min_{c\in \{\pm 1\}^{n-1}} s_{\mathrm{min}}(A^c_n-\lambda I_n), \quad \lambda \in \C.
\end{equation}
Clearly, computing $S_n(\lambda)$ for a particular $\lambda$, to check membership of $\sigma^2_{n,\eta}$, requires calculation of the smallest singular value of $2^{n-1}$ matrices of order $n$. Note that it follows from (\ref{eq:contdep}) that
\begin{equation} \label{eq:contdep2}
|S_n(\lambda)-S_n(\mu)| \leq |\lambda-\mu|, \quad \lambda,\mu\in\C.
\end{equation}

\noindent
\ifpics{  
\begin{center}
\begin{tabular}{ccc}
\includegraphics[width=0.3\textwidth]{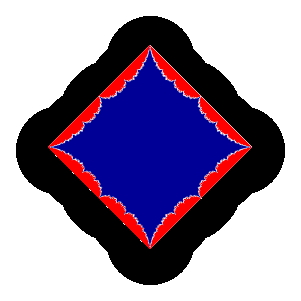}&
\includegraphics[width=0.3\textwidth]{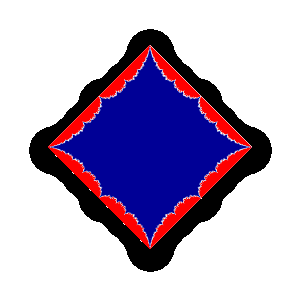}&
\includegraphics[width=0.3\textwidth]{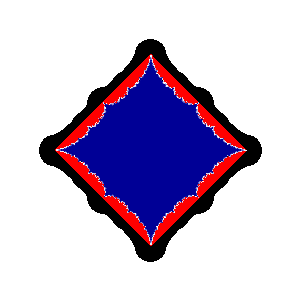}
\end{tabular}
\end{center}
}\fi 
\begin{figure}[h]
\caption{\footnotesize \label{sigma_method1} Plots, for $n=6,12$ and $18$, of the sets $\overline{\sigma^2_{n,\eps_n}}$, which are inclusion sets for $\Sigma=\spec A^{b}$,
 when $b\in\{\pm 1\}^{\Z}$ is pseudo-ergodic. Also shown, overlaid in red, is the square $\Delta$, with corners at $\pm 2$ and $\pm 2\ri$, which is $W(A^b)$, the numerical range of $A^b$. Overlaid on top of that in blue is the set $\pi_{30}\cup\D$ which, by definition and Theorem \ref{thm_sier}, is a subset of $\Sigma$.} \label{fig:inclusion}
\end{figure}

In Figure \ref{sigma_method1} we plot $\overline{\sigma^2_{n,\eps_n}}$, for $n=6,12$, and $18$. Each of these sets contains $\Sigma$, by Theorem \ref{thm_final}, and note that each set is invariant under reflection in either axis or under rotation by $90^0$, by Lemma \ref{lem_symm}. On the same figure we plot the square $\overline{\Delta}$ which, by Lemma \ref{lem_nr}, also contains $\Sigma$. It appears that, for $n\leq 18$, $\Delta\subset \sigma^2_{n,\eps_n}$. If this were to hold for all $n\in\N$ then it would follow, from Theorem \ref{thm_final}, which tells us that $\overline{\sigma^2_{n,\eps_n}}\searrow \Sigma$, and Lemma \ref{lem_nr}, which tells us that $\Sigma\subset \overline{\Delta}$, that $\Sigma=\overline{\Delta}$. It seems impossible from these plots to take an educated guess as to whether or not $\Delta\subset \sigma^2_{n,\eps_n}$ holds for all $n$, not least because the convergence rate of $\overline{\sigma^2_{n,\eps_n}}$ to $\Sigma$ may be slow: Theorem \ref{thm_final} tells us that $\dist(\overline{\sigma^2_{n,\eps_n}},\Sigma)\leq \dist(\Sigma^2_{\eps_n},\Sigma)$ but it follows from (\ref{eq:thing}) that $\dist(\Sigma^2_{\eps_n},\Sigma)\geq \eps_n\approx 2\pi/(n+2)$.

We have not been able to produce similar plots to those in Figure \ref{sigma_method1} for much larger values of $n$ because of the large computational cost. But it is feasible to compute $S_n(\lambda)$ for a single $\lambda$ for larger $n$. We have carried out this computation for $\lambda = 1.5+0.5\ri$, a quarter of the way along one of the sides of $\Delta$. Computing in standard double-precision floating point arithmetic we find that
\begin{equation} \label{eqS34}
S_{34}(1.5+0.5\ri) = 0.17201954132506... > \eps_{34} = 0.169830415547956...\; .
\end{equation}
This implies that $1.5+0.5\ri \not\in \sigma^2_{34,\eps_{34}}$ and so $1.5+0.5\ri \not\in \Sigma$, which of course implies that $\Sigma$ is a strict subset of $\overline{\Delta}$. In fact, in view of (\ref{eq:contdep2}) and the symmetries of $\Sigma$ noted in Lemma \ref{lem_sim}, the inequality (\ref{eqS34}) implies more, namely that
$$
(\pm(1.5\pm0.5\ri) + \eta\D)\cap \Sigma = \emptyset,
$$
for $\eta= \eps_{34}-S_{34}(1.5+0.5\ri) = 0.0021891257771...$.

We note that the computation required to evaluate $S_{34}(1.5+0.5\ri)$ and so establish that $1.5+0.5\ri\not\in \Sigma$ is considerable: we need to evaluate the smallest singular value of $2^{33}\approx 8.6\times 10^9$ matrices of order 34 (of course these computations are ideally suited for parallel implementation). We note that it seems to be necessary to use $n$ as large as 34, in that other computations show that $S_{33}(1.5+0.5\ri) < \eps_{33}$, so that $1.5+0.5\ri \in \sigma^2_{33,\eps_{33}}$.

\section{The Random Case and Concluding Remarks}

We finish this paper by spelling out the implications of the above results for the finite matrices $A^b_n$ and $A^{b,c}_n$, the bi-infinite matrices $A^b$ and $A^{b,c}$, and the semi-infinite matrices $A^b_+$ and $A^{b,c}_+$, in the case when the entries of $b$ and $c$ are random variables taking the values $\pm 1$.

\begin{theorem} \label{thm_rand}
Suppose that the entries of $b\in \{\pm 1\}^\Z$ are iid random variables, with $\mathrm{Pr}(b_m=1)\in (0,1)$. Then:
\begin{description}
\item[(i) ] $\spec A^b \subset \Sigma$, $\spec A^b_+ \subset \Sigma$, with $\spess A^b = \spec A^b = \spess A^b_+=\spec A^b_+ = \Sigma$ almost surely.
\item[(ii)] $W(A^b_+)\subset W(A^b) \subset \Delta$, with  $W(A^b)=W(A^b_+)=\Delta$ almost surely.
\item[(iii)] For $n\in \N$, $\spec A^b_n \subset \Sigma$ and $W(A^b_n)\subset \Delta$, and, as $n\to\infty$, $W(A^b_n)\nearrow \Delta$, almost surely.
\item[(iv)] For $\eps>0$ and $p\in [1,\infty]$, $\specn_\eps^p A^b \subset \Sigma_\eps^p$, $\specn_\eps^p A^b_+ \subset \Sigma_\eps^p$, with $\specn_\eps^p A^b =\specn_\eps^p A^b_+ = \Sigma_\eps^p$ almost surely.
\item[(v)] For $\eps>0$, $p\in [1,\infty]$, and $n\in\N$, $\specn_\eps^p A_n^b \subset \Sigma_\eps^p$ and, as $n\to\infty$, $\specn_\eps^p A^b_n \nearrow \Sigma_\eps^p$, almost surely.
\end{description}
Similarly, if $b,c\in \{\pm 1\}^\Z$, and the entries of $bc$ are iid random variables, with $\mathrm{Pr}(b_mc_m = 1)\in (0,1)$, then (i)-(v) hold with $A^b$, $A^b_+$, $A^b_n$, replaced by $A^{b,c}$, $A^{b,c}_+$, and $A^{b,c}_n$, respectively.
\end{theorem}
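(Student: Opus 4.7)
The plan is to reduce every claim to a deterministic result already established, via the Infinite Monkey lemma. Specifically, Lemma \ref{imt} (together with its semi-infinite analogue noted after Definition \ref{pse}) guarantees that, under the iid hypothesis with $\mathrm{Pr}(b_m=1)\in(0,1)$, the sequence $b$ is pseudo-ergodic as an element of $\{\pm1\}^\Z$ almost surely, and its one-sided restriction is pseudo-ergodic in $\{\pm1\}^\N$ almost surely. Throughout, I would work on the probability-one event where both pseudo-ergodicity statements hold, and invoke the already-proved theorems.

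For the $A^b$, $A^b_+$, $A^b_n$ case, the inclusion parts of the theorem hold for \emph{every} $b$ (no randomness needed): $\spec A^b\subset\Sigma$ and $\specn_\eps^p A^b\subset\Sigma_\eps^p$ are immediate from the definitions of these sets as unions over $\{\pm1\}^\Z$; $\spec A^b_+\subset\Sigma$ and $\specn_\eps^p A^b_+\subset\Sigma_\eps^p$ follow by extending $b$ to $\tilde b\in\{\pm1\}^\Z$ with $\tilde b_k=1$ for $k\le 0$ and applying Lemma \ref{lem_ref} to pass to $A^{\tilde b,R\tilde b}$, then Lemma \ref{lem_sim} to pass to $A^{\tilde b\, R\tilde b}$, which is a single-diagonal bi-infinite matrix; $\spec A^b_n\subset\Sigma$ is the first chain of inclusions in Theorem \ref{prop:spec_An}; $\specn_\eps^p A_n^b\subset\Sigma_\eps^p$ is Corollary \ref{cor:union_sp2}; and $W(A^b_n)\subset W(A^b_+)\subset W(A^b)\subset\Delta$ is Lemma \ref{lem_nr}. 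The almost-sure equality and convergence statements then come by restricting to the pseudo-ergodic event and invoking Theorem \ref{thm_main} (for (i) and (iv)), Lemma \ref{lem_nr} together with Corollary \ref{cor_numconv} (for (ii) and (iii)), and Corollary \ref{cor_pse_conv} (for (v)).

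For the $A^{b,c}$, $A^{b,c}_+$, $A^{b,c}_n$ case the idea is to conjugate away the second diagonal. Under the hypothesis that $(b_mc_m)_{m\in\Z}$ is iid with $\mathrm{Pr}(b_mc_m=1)\in(0,1)$, Lemma \ref{imt} gives that $bc$ is pseudo-ergodic almost surely. Lemma \ref{lem_sim} provides a bounded invertible $M_a$ (indeed an involution, since $a\in\{\pm1\}^\Z$) intertwining $A^{b,c}$ with $A^{bc}$, so that $\spec A^{b,c}=\spec A^{bc}$, $\spess A^{b,c}=\spess A^{bc}$, and $\specn_\eps^p A^{b,c}=\specn_\eps^p A^{bc}$ for every $p\in[1,\infty]$ and every realisation. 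The same applies to $A^{b,c}_+$ via the semi-infinite version of Lemma \ref{lem_sim} noted after its statement, and to $A^{b,c}_n$ via Lemma \ref{lem_simfd} with the diagonal matrix $D_n^a$. Since, for $a\in\{\pm1\}^\Z$, the operator $M_a$ is unitary on $\ell^2$ (similarly $D_n^a$ on $\C^n$), conjugation additionally preserves the $\ell^2$-numerical range, so $W(A^{b,c})=W(A^{bc})$, $W(A^{b,c}_+)=W(A^{bc}_+)$, and $W(A^{b,c}_n)=W(A^{bc}_n)$. Applying the single-diagonal case proved in the previous paragraph to $A^{bc}$ on the pseudo-ergodic event then yields the desired parts (i)--(v) for the two-diagonal operators.

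The main obstacle is purely bookkeeping: one has to keep clearly separate (a) which inclusions hold deterministically for every $b$ (or every $(b,c)$) and therefore contribute no probabilistic content, versus (b) which equalities and convergence statements genuinely require the pseudo-ergodic event supplied by Lemma \ref{imt}. A small but easy-to-overlook point is that the reduction $A^{b,c}\leadsto A^{bc}$ preserves the numerical range only because the intertwining operators $M_a$ and $D_n^a$ are unitary on $\ell^2$, a fact that is automatic here because all sign choices lie in $\{\pm1\}$, and that the pseudo-ergodicity we need in the two-diagonal case is exactly pseudo-ergodicity of the product $bc$, which is precisely what the hypothesis on $(b_mc_m)$ supplies. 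Once these correspondences are in place, the proof of Theorem \ref{thm_rand} is a direct assembly of Lemmas \ref{imt}, \ref{lem_nr}, \ref{lem_sim}, \ref{lem_simfd}, \ref{lem_ref}, Theorems \ref{thm_main}, \ref{prop:spec_An}, and Corollaries \ref{cor:union_sp2}, \ref{cor_pse_conv}, \ref{cor_numconv}.
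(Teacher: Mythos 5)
Your proposal is correct and follows essentially the same route as the paper: invoke Lemma \ref{imt} to reduce to the pseudo-ergodic event, observe that the inclusions are deterministic consequences of the definitions of $\Sigma$ and $\Sigma_\eps^p$ together with Lemma \ref{lem_ref}, Theorem \ref{prop:spec_An}, Corollary \ref{cor:union_sp2} and Lemma \ref{lem_nr}, obtain the almost-sure equalities and convergences from Theorem \ref{thm_main} and Corollaries \ref{cor_numconv} and \ref{cor_pse_conv}, and handle the two-diagonal case by the similarity transforms of Lemmas \ref{lem_sim} and \ref{lem_simfd}. Your explicit remark that $M_a$ and $D_n^a$ are unitary on $\ell^2$, so that the reduction $A^{b,c}\leadsto A^{bc}$ also preserves numerical ranges, is a point the paper leaves implicit but is exactly the right justification.
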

\begin{proof}
To see that (i)-(v) hold, note that, by Lemma \ref{imt} and the remarks at the end of Section \ref{sec:lo}, the condition of the theorem imply that $b$ and also $b_+:=(b_1,b_2,...)$ are pseudo-ergodic with probability one. Then (i) follows from the definition of $\Sigma$ in Theorem \ref{thm_lo2}, and from Lemma \ref{lem_ref} and Theorem \ref{thm_main}. That (ii) and (iii) hold follows from Lemma \ref{lem_nr}, Theorem \ref{prop:spec_An} and Corollary \ref{cor_numconv}. That (iv) holds follows from the definition of $\Sigma_\eps^p$ in Theorem \ref{thm_lo2}, and from Lemma \ref{lem_ref} and Theorem \ref{thm_main}. Finally, (v) follows from corollaries \ref{cor:union_sp2} and \ref{cor_pse_conv}. That (i)-(v) hold for the case where $A^b$, $A^b_+$, $A^b_n$ are replaced by $A^{b,c}$, $A^{b,c}_+$, and $A^{b,c}_n$, respectively, and the entries of $bc$ are iid random variables, with $\mathrm{Pr}(b_mc_m = 1)\in (0,1)$, follow using the same results, with the help of lemmas \ref{lem_sim} and \ref{lem_simfd} and the observations between these lemmas on the semi-infinite case, and noting first that these assumptions imply that $bc$ and $(b_1c_1,b_2c_2,...)$ are pseudo-ergodic, with probability one.
\end{proof}

Of course, in the above theorem $\Delta = \{z=a+\ri b: a,b\in\R,\, |a|+|b|<2\}$ and $\Sigma$ and $\Sigma_\eps^p$ are as defined in Theorem \ref{thm_lo2}. The following theorem summarises, for the convenience of the reader, what we have established in the sections above about the compact set $\Sigma$ and the bounded open sets $\Sigma_\eps^p$. Recall that $\sigma_\infty$ and $\pi_\infty$ are defined by (\ref{eqn:sig_sub}) and (\ref{eq:pidef}), respectively, $\sigma^p_{n,\eps}$ is defined in Corollary \ref{cor:union_sp2} (and see (\ref{eq:sn}) for $p=2$), $\eps_n$ is defined in Theorem \ref{thm_upper}, and $S_n(\lambda)$ in (\ref{eq:Sndef}).

\begin{theorem} \label{thm_vfinal}
For $\eps>0$ and $p\in[1,\infty]$, where $q\in[1,\infty]$ is given by $p^{-1}+q^{-1}=1$:
\begin{description}
\item[(i) ] $\overline{\D}\subset \Sigma \subset \overline{\Delta}$, and $\Sigma$ is a strict subset of $\overline{\Delta}$ provided $\eps_{34}-S_{34}(1.5+0.5\ri)>0$, for which see (vi).
\item[(ii)] $\sigma_\infty\subset\pi_\infty\subset \Sigma\subset \Sigma_\eps^p \subset \Sigma^p_{\eps^\prime}$, for $\eps^\prime>\eps$. (See Figures \ref{fig:30pics1}, \ref{fig:30pics2}, \ref{fig:pic5in12} and \ref{fig:pic25zoom}  for visualisations of $\sigma_n$ and $\pi_n$, for $n\in\N$, and their interrelation.)
\item[(iii)] $\Sigma$ and $\Sigma_\eps^p$ are invariant under reflection in the real and imaginary axes and under rotation by $90^0$.
\item[(iv)] $\Sigma_\eps^p = \Sigma_\eps^q$ and $\Sigma_\eps^p\subset \Sigma_\eps^r$ if $1\leq r\leq p\leq 2$, so that $\Sigma_\eps^2 = \bigcap_{r\in[1,\infty]} \Sigma_\eps^r$.
\item[(v)] As $n\to\infty$, $\overline{\sigma_{n,\eps_n}^2}\searrow \Sigma$, $\sigma^2_{n,\eps+\eps_n}\searrow \Sigma_\eps^2$, and $\sigma^2_{n,\eps}\nearrow \Sigma_\eps^2$. (See Figure \ref{fig:inclusion} for visualisations of $\sigma^2_{n,\eps_n}$, for $n=6,12,18$.)
\item[(vi)] For $\lambda = 1.5+0.5\ri$, provided $\eta = \eps_{34}-S_{34}(\lambda)>0$ (and floating point calculations give $\eta\approx 0.00219$), it holds that $\pm(1.5\pm 0.5\ri) + \eta \D \cap \Sigma = \emptyset$.
\end{description}
\end{theorem}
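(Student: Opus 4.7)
The plan is essentially to verify that every clause of Theorem \ref{thm_vfinal} has already been proved somewhere in the preceding sections, so the ``proof'' is a systematic cross-reference, with only (vi) (and hence the final clause of (i)) requiring any genuine assembly of ingredients.

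First I would dispose of (ii), (iii) and (iv), which are essentially bookkeeping. For (ii), the inclusion $\sigma_\infty\subset\pi_\infty$ is immediate from Theorem \ref{prop:spec_An} (which gives $\sigma_n\subset\pi_{2n+2}$ for each $n$); the inclusion $\pi_\infty\subset\Sigma$ follows from (\ref{eq:spperntri}) together with Theorem \ref{thm_lo2}; and $\Sigma\subset\Sigma_\eps^p\subset\Sigma_{\eps'}^p$ for $0<\eps<\eps'$ is standard from the definition of pseudospectrum (see the paragraph containing (\ref{eq:thing})). Part (iii) is exactly the content of Lemma \ref{lem_symm}. Part (iv) collects the duality identity $\Sigma_\eps^p=\Sigma_\eps^q$ and the monotonicity $\Sigma_\eps^r\supset\Sigma_\eps^p$ for $1\le r\le p\le 2$ from Lemma \ref{lem_sim} combined with Theorem \ref{thm_main}; the identity $\Sigma_\eps^2=\bigcap_{r\in[1,\infty]}\Sigma_\eps^r$ then follows by taking $p=2$ in the second inclusion together with $\Sigma_\eps^p=\Sigma_\eps^q$ (which reduces $r\in[2,\infty]$ to the case $r\in[1,2]$).

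Next, (v) is simply Theorem \ref{thm_final} (its final sentence) restated. For (i), the inclusion $\overline{\D}\subset\Sigma$ is Theorem \ref{thm_sier}, and $\Sigma\subset\overline{\Delta}$ is part of Lemma \ref{lem_nr}. The strict inclusion reduces to (vi), since (vi) exhibits a point $1.5+0.5\ri\in\overline{\Delta}\setminus\Sigma$ (and indeed a whole disc of such points).

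So the only point with any real content is (vi), and this is the step I would treat carefully. The plan is to invoke Theorem \ref{thm_final}, which yields $\Sigma\subset\overline{\sigma^2_{n,\eps_n}}$ for every $n$, so in particular
\begin{equation*}
\lambda\in\Sigma \implies S_{34}(\lambda)\le \eps_{34},
\end{equation*}
by the definition (\ref{eq:sn})–(\ref{eq:Sndef}) of $\sigma^2_{n,\eta}$ and $S_n$. Contrapositively, if $S_{34}(\lambda_0)>\eps_{34}$ at $\lambda_0=1.5+0.5\ri$, then $\lambda_0\notin\Sigma$; combined with the Lipschitz estimate (\ref{eq:contdep2}) this upgrades to $\mu\notin\Sigma$ for every $\mu$ with $|\mu-\lambda_0|<\eta:=\eps_{34}-S_{34}(\lambda_0)$, i.e.\ $(\lambda_0+\eta\D)\cap\Sigma=\emptyset$. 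The four-fold symmetry of $\Sigma$ from Lemma \ref{lem_symm} (equivalently part (iii) above) then extends this to the four discs $\pm(1.5\pm0.5\ri)+\eta\D$ as claimed. The main obstacle here is genuinely computational rather than conceptual: one must certify numerically that the smallest singular value of each of the $2^{33}$ matrices $A^c_{34}-\lambda_0 I_{34}$, $c\in\{\pm1\}^{33}$, exceeds $\eps_{34}$ by a safe margin, and the proof as written relies on the floating-point evaluation reported in (\ref{eqS34}). Once that inequality is accepted, the whole theorem follows by assembling the references above.
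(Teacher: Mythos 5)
Your proposal is correct and follows essentially the same route as the paper: each part is discharged by cross-reference to Theorem \ref{thm_sier}, Lemma \ref{lem_nr}, Theorem \ref{prop:spec_An}, Lemma \ref{lem_symm}, Lemma \ref{lem_sim} with Theorem \ref{thm_main}, and Theorem \ref{thm_final}, while (vi) is obtained exactly as at the end of Section \ref{sec:final} via $\Sigma\subset\overline{\sigma^2_{34,\eps_{34}}}$, the Lipschitz bound (\ref{eq:contdep2}), and the symmetries of $\Sigma$. Your treatment of (vi) is, if anything, slightly more explicit than the paper's, and your citation of Lemma \ref{lem_symm} for the four-fold symmetry is the more precise reference.
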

\begin{proof}
Part (i) follows from Theorem \ref{thm_sier} (taken from \cite{CWChonchaiyaLindner2011}) and Lemma \ref{lem_nr}, and that $\Sigma$ is a strict subset of $\Delta$ holds, as discussed at the end of \ref{sec:final}, provided $\eta = \eps_{34}-S_{34}(1.5+0.5\ri)>0$. Part (ii) is Theorem \ref{prop:spec_An}, with  $\Sigma_\eps^p \subset \Sigma^p_{\eps^\prime}$ because $\Sigma_\eps^p = \specn_\eps^p A^b$ if $b\in\{\pm 1\}^\Z$ is pseudo-ergodic (Theorem \ref{thm_main}). Part (iii) is Lemma \ref{lem_symm}, (iv) is from \ref{lem_sim}, (v) is part of Theorem \ref{thm_final}, and (vi) is from the end of Section \ref{sec:final}.
\end{proof}

It is clear from the above results that we understand well, in Theorem \ref{thm_rand}, the interrelation between the numerical ranges and pseudospectra of the semi-infinite, bi-infinite, and finite random matrix cases, and have shown that the almost sure spectrum is the same set $\Sigma$ for the semi-infinite and bi-infinite cases, and contains the spectrum in the finite matrix case. Interesting open questions are whether or not, similarly to the analogous results for the pseudospectra, $\spec A^b_n \nearrow \Sigma$ almost surely as $n\to\infty$, which would imply that $\sigma_\infty$ is dense in $\Sigma$, so that $\pi_\infty$ is dense in $\Sigma$. (That $\sigma_\infty$ is dense in $\Sigma$ was conjectured in \cite{CWChonchaiyaLindner2011}.)  Note that, if it does hold that $\spec A^b_n \nearrow \Sigma$ almost surely, then both Figs \ref{fig:30pics1} and \ref{fig:30pics2} are visualisations of sequences of sets converging to $\Sigma$.

Regarding the geometry of $\Sigma$ (and of the pseudospectra $\Sigma_\eps^p$), we have some information in Theorem \ref{thm_vfinal}, including in the last part of this theorem establishing  a computable sequence of sets converging from above to $\Sigma$ (a sequence of three of these plotted in Figure \ref{fig:inclusion}). However there is much that is not known. Is $\Sigma$ connected (which would imply, by general results on pseudospectra \cite[Theorem 4.3]{TrefEmbBook}, that also $\Sigma_\eps^p$ is connected)? In fact, is $\Sigma$ simply-connected? What is the geometry of the boundary of $\Sigma$, and the geometry of the sets $\sigma_n$, the finite-dimensional analogues of $\Sigma$ (cf.\ Figure \ref{fig:pic25zoom})? We have conjectured in \cite{CWChonchaiyaLindner2011} that $\Sigma$ is a simply-connected set which is the closure of its interior and which has a fractal boundary, which is plausible from, or at least consistent with, Figure \ref{fig:inclusion}, if it holds that $\overline{\sigma_\infty} = \Sigma$. Our methods and results provide no information about what is a usual concern of research on random matrices, to obtain asymptotically in the limit as $n\to\infty$ the pdf of the density of eigenvalues, except, of course, that we have shown in Theorem \ref{thm_rand}(iii) that the support of this pdf is a subset of $\Sigma$.

There are many possibilities for applying the methods introduced in this paper to much larger classes of random (or pseudo-ergodic) operators. For some steps in this direction we refer the reader to \cite{LindnerRoch2010,CW.Heng.ML:UpperBounds,CWDavies2011}.


{\bf Acknowledgements.} We are grateful to Estelle Basor from the American Institute of Mathematics for drawing our attention to this beautiful operator class, and are grateful for feedback on our work in progress from Brian Davies and Eugene Shargorodsky (KCL) and from Titus Hilberdink and Michael Levitin (Reading), including the feedback that prompted the computation of $S_{34}(1.5+0.5\ri)$ in Section \ref{sec:final}. In regard to this computation, we are grateful for the assistance with programming and parallel implementation of the calculations from Roman Unger of TU Chemnitz. We wish to thank Albrecht B\"ottcher (Chemnitz) for his very helpful comments on an earlier version of this script. We also acknowledge the financial support of a Higher Education Strategic Scholarship for Frontier Research from the Thai Ministry of Higher Education to the second author, of Marie-Curie Grant MEIF-CT-2005-009758 of the EU to the third and first authors, and Marie-Curie Grant PERG02-GA-2007-224761 of the EU to the third author.


\bigskip

\noindent {\bf Authors' addresses:}\\
\\
Simon~N.~Chandler-Wilde \hfill {\tt S.N.Chandler-Wilde@reading.ac.uk}\\
Department of Mathematics and Statistics\\
University of Reading\\
Reading, RG6 6AX\\
UK\\

\noindent
Ratchanikorn Chonchaiya \hfill {\tt ratchanikorn@buu.ac.th}\\
Department of Mathematics\\
Faculty of Science\\
Burapha University\\
Longhard Bangsaen Road\\
Muang, Chonburi\\
20131, THAILAND\\
\\
Marko Lindner\hfill {\tt lindner@tuhh.de}\\
Institute of Mathematics\\
Hamburg University of Technology\\
Schwarzenbergstr. 95 E\\
D-21073 Hamburg\\
GERMANY

\end{document}